\newcommand{\roeo}{\rho_{e}^{o}}%
\def\cp{\,\square\,}
\title{Induced matching vs edge open packing: trees and product graphs}
\date{}
\author{{Bo\v{s}tjan Bre\v{s}ar$^{1,2}$, Tanja Dravec$^{1,2}$, Jaka Hed\v{z}et$^{1,2}$ and Babak Samadi$^{2}$}\vspace{1.25mm}\\
$^{1}$Faculty of Natural Sciences and Mathematics, University of Maribor, Slovenia\\
$^{2}$Institute of Mathematics, Physics and Mechanics, Ljubljana, Slovenia\vspace{1mm}\\
{bostjan.bresar@um.si}\\
{tanja.dravec@um.si}\\
{jaka.hedzet@imfm.si}\\
{babak.samadi@imfm.si}}
\date{}
\newenvironment{claimproof}[1]{{\it\noindent{Proof.}}\space#1}{\footnotesize \hfill \ensuremath{(\square)} \medskip}
\newtheorem{theorem}{Theorem}[section]
\newtheorem{corollary}[theorem]{Corollary}
\newtheorem{claim}{Claim}
\newtheorem{lemma}[theorem]{Lemma}
\newtheorem{observation}[theorem]{Observation}
\newtheorem{proposition}[theorem]{Proposition}
\newtheorem{obs}[theorem]{Observation}
\newtheorem{problem}{Problem}
\theoremstyle{definition}
\theoremstyle{remark}
\newcommand{\eop}{\rho_{e}^{o}}
\newcommand{\im}{\nu_{I}}
\begin{document}

\maketitle

\begin{abstract}
Given a graph $G$, the maximum size of an induced subgraph of $G$ each component of which is a star is called the edge open packing number, $\rho_{e}^{o}(G)$, of $G$. Similarly, the maximum size of an induced subgraph of $G$ each component of which is the star $K_{1,1}$ is the induced matching number, $\nu_I(G)$, of $G$. While the inequality $\rho_e^o(G)\geq \nu_{I}(G)$ clearly holds for all graphs $G$, we provide a structural characterization of those trees that attain the equality. We prove that the induced matching number of the lexicographic product $G\circ H$ of arbitrary two graphs $G$ and $H$ equals $\alpha(G)\nu_I(H)$. By similar techniques, we prove sharp lower and upper bounds on the edge open packing number of the lexicographic product of graphs, which in particular lead to NP-hardness results in triangular graphs for both invariants studied in this paper. For the direct product $G\times H$ of two graphs we provide lower bounds on $\nu_I(G\times H)$ and $\rho_{e}^{o}(G\times H)$, both of which are widely sharp. We also present sharp lower bounds for both invariants in the Cartesian and the strong product of two graphs. Finally, we consider the edge open packing number in hypercubes establishing the exact values of $\rho_e^o(Q_n)$ when $n$ is a power of $2$, and present a closed formula for the induced matching number of the rooted product of arbitrary two graphs over an arbitrary root vertex. 
\end{abstract}
\textbf{2020 Math. Subj. Class.:} 05C05, 05C69, 05C70, 05C76\\
\textbf{Keywords}: induced matching, edge open packing, lexicographic product, direct product, Cartesian product, strong product, rooted product, independent set, tree.


\section{Introduction and preliminaries} 

Throughout this paper, we consider $G$ as a finite simple graph with vertex set $V(G)$ and edge set $E(G)$. The (\textit{open}) {\em neighborhood} of a vertex $v$ is denoted by $N_{G}(v)$, and its {\em closed neighborhood} is $N_{G}[v]=N_{G}(v)\cup \{v\}$ (we omit the index $G$ if the graph $G$ is clear from the context). The {\em minimum} and {\em maximum degrees} of $G$ are denoted by $\delta(G)$ and $\Delta(G)$, respectively. Given the subsets $A,B\subseteq V(G)$, by $[A,B]$ we denote the set of edges with one endvertex in $A$ and the other in $B$. If $X\subseteq V(G)$, then $G[X]$ denotes the subgraph of $G$ induced by the vertices of $X$. On the other hand, if $Y\subseteq E(G)$, then $G[Y]$ denotes the subgraph of $G$ induced by the endvertices of edges in $Y$.

A set $M$ of edges in a graph $G$ is a {\em matching} in $G$ if no two edges of $M$ share an endvertex. Matching theory is one of the core classical topics in graph theory; see the monograph~\cite{matching} from as early as 1986. In this paper, we are interested in matchings satisfying a stronger condition. Notably, a matching $M$ is an \textit{induced matching} in $G$ if no two edges of $M$ are joined by an edge in $G$. The \textit{induced matching number} $\im(G)$ of $G$ is the maximum cardinality of an induced matching in $G$. The problem of finding a maximum induced matching was originally introduced by Stockmeyer and Vazirani \cite{sv} (under the name ``risk-free marriage problem") and extensively investigated in literature; see, for instance, \cite{Cam,ddl,Lozin} and references therein. Note that induced matchings were also studied under the name ``strong matchings'' (see, for instance, \cite{elm,FL, gh-2005}) as well as under the name ``$2$-edge packings'' (see \cite{AS,cks}). Unlike for the standard matching problem which is solvable in polynomial time, it was proved in \cite{sv} that the decision version of the induced matching number is an NP-complete problem. In contrast, a linear-time algorithm for determining the induced matching number in trees was found in \cite{zito}.

Chelladurai et al. \cite{cks} introduced the concept of edge open packing in graphs. A subset $B\subseteq E(G)$ is an \textit{edge open packing set} (or, an {\em EOP set} for short) if for each pair of edges $e_{1},e_{2}\in B$, there is no edge from $E(G)$ joining an endvertex of $e_{1}$ to an endvertex of $e_{2}$. In other words, for any two edges $e_{1},e_{2}\in B$, 
there is no edge $e\in E(G)$ leading to the existence of a path $e_{1}ee_{2}$ or a triangle with edges $e_{1},e,e_{2}$ in $G$ (such an edge $e$ is called the \textit{common edge} of $e_{1}$ and $e_{2}$). The \textit{edge open packing number} ({\em EOP number}), $\rho_{e}^{o}(G)$, of $G$ is the maximum cardinality of an EOP set in $G$. 
The same authors also presented a real-world application of EOP sets to the packet radio networks.  An additional motivation for studying this parameter comes from the recently introduced concept of injective edge coloring of graphs \cite{bq-2018,cccd,krx-2021,msy}, since this coloring can be defined as a partition of the edge set of a graph into EOP sets. Edge open packing has also been investigated in \cite{BB} from the complexity and algorithmic points of view. 

We observe that if $B$ is an EOP set (resp. induced matching) of $G$, then $G[B]$ is a disjoint union of induced stars (resp. $K_{1,1}$-stars). In this sense, the problem of induced matching can be considered as a variation of the EOP problem. Note that an EOP set is not necessarily a matching, let alone an induced matching. 

A subset $P\subseteq V(G)$ is an \textit{open packing} (or an \textit{open packing set}) if $N(u)\cap N(v)=\emptyset$ for any distinct vertices $u,v\in P$. The \textit{open packing number}, denoted by $\rho^{o}(G$), is the maximum cardinality among all open packings in $G$ (see \cite{HS}). The names of the concepts suggest that edge open packing is the edge version of (vertex) open packing, yet one has to find a correct interpretation of this. Notably, considering the star $K_{1,r}$ with $r\geq3$, and letting $e,f\in E(K_{1,r})$ be two distinct edges, we get $N(e)\cap N(f)\neq \emptyset$,
while $E(K_{1,r})$ is an EOP set of $K_{1,r}$. Thus, the definitions of both concepts cannot be simply transferred from each other. Yet, in an open packing set, no two vertices $u$ and $v$ are connected through a third vertex. Analogously, no two edges in an EOP set are connected through a third edge, while they may or may not have a common endvertex. We also remark that an injective coloring of a graph can be defined as a partition of the vertex set into open packings~\cite{bsy-2023}, which provides another link between open packings and edge open packings. 
The concept of (vertex) open packing will turn out to be useful when dealing with bounding the EOP number of direct products of graphs in Section \ref{sec-dir}.

We refer to $|V(G)|$ and $|E(G)|$ as the {\em order} and the {\em size} of $G$, respectively. By a $\rho_{e}^{o}(G)$-set, a $\im(G)$-set, an $\alpha(G)$-set and a $\rho^{o}(G)$-set we represent an EOP set, an induced matching, an independent set and an open packing set of $G$ of cardinality $\rho_{e}^{o}(G)$, $\im(G)$, $\alpha(G)$ and $\rho^{o}(G)$, respectively, where $\alpha(G)$ stands for the independence number of $G$. For terminology and notation not explicitly defined here, we refer to \cite{West}. 

For the four standard products of graphs $G$ and $H$ (according to \cite{ImKl}), the vertex set of the product is $V(G)\times V(H)$. Their edge sets are defined as follows.
\begin{itemize}
\item In the \emph{Cartesian product} $G\square H$ two vertices are adjacent if they are adjacent in one coordinate and equal in the other.
\item In the \emph{direct product} $G\times H$ two vertices are adjacent if they are adjacent in both coordinates.
\item The edge set of the \emph{strong product} $G\boxtimes H$ is the union of $E(G\square H)$ and $E(G\times H)$.
\item Two vertices $(g,h)$ and $(g',h')$ are adjacent in the \emph{lexicographic product} $G\circ H$ if either $gg'\in E(G)$ or ``$g=g'$ and $hh'\in E(H)$''.
\end{itemize}
Each of the products $G\ast H$, where $*\in \{\square,\times,\boxtimes,\circ\}$, is associative and only the first three ones are commutative~\cite{ImKl}.
Given a vertex $g\in V(G)$, let $^{g}\! H$ denote the subgraph of $G*H$ induced by $\{(g,h)\mid h\in V(H)\}$, which we call an {\em $H$-fiber}. In a similar way we define a {\em $G$-fiber} $G^h$, where $h\in V(H)$. 

The similar nature of induced matchings and EOP sets and the inequality $\nu_{I}(G)\leq \rho_{e}^{o}(G)$, that holds for all graphs $G$, motivate us to simultaneously investigate these two concepts. Note that the problem of characterizing the graphs achieving $\nu_{I}(G)=\rho_{e}^{o}(G)$ was posed as an open problem in \cite{cks}.
Trees are a natural environment for initial investigations of graph invariants. In fact, both graph invariants that are the main theme of this paper have already been studied in trees; induced matchings in \cite{FL,zito} and EOP sets in \cite{BB}. Hence, the question in which trees these two related graph invariants are equal is a natural one. 
On the other hand, graph products are an important concept in graph theory, especially in relation with graph invariants, cf.~the monograph~\cite{ImKl} surveying product graphs. During our study of induced matchings in several graph products, we noticed that similar techniques can be applied in studying EOP sets in these products, which gave us further motivation to simultaneously investigate both concepts. Surprisingly, the two invariants (in particular, the induced matching number)  have not hitherto been investigated in graph products, except for some sporadic instances of product graphs such as hypercubes \cite{gh-2005}. 
 
The paper is organized as follows. A complete characterization of trees $T$ with the induced matching number equal to the EOP number is given in Section~\ref{sec-tree}. More formally, it is the characterization of extremal trees for the inequality $\nu_{I}(G)\leq \rho_{e}^{o}(G)$ which holds for all graphs $G$.  Roughly speaking, trees that achieve the equality are obtained from the disjoint union of $k$ spiders by adding $k-1$ edges, each having a center of a spider as an endvertex, and at least two leaves of each of the spiders keep their degree $1$ also in the resulting tree (recall that a spider is a tree obtained from a star by subdividing each of its edges exactly once). 
Next, we investigate these two parameters in all four standard product graphs. In Section \ref{sec-lex}, we prove the exact formula $\nu_{I}(G\circ H)=\alpha(G)\nu_{I}(H)$ and bound $\rho_{e}^{o}(G\circ H)$ from below and above by sharp inequalities, making use of similar techniques in all of the proofs. As an immediate corollary of these two results, we infer that the computation problems of $\nu_{I}$ and $\rho_{e}^{o}$ are NP-hard even for a special family of graphs, namely triangular graphs. In Section \ref{sec-dir}, by different approaches, we give sharp lower bounds on $\nu_{I}$ and $\rho_{e}^{o}$ of direct product graphs. It is proved in Section \ref{sec-Cp} that $\rho_{e}^{o}(G\circ H)$ is a lower bound on both $\rho_{e}^{o}(G\boxtimes H)$ and $\rho_{e}^{o}(G\square H)$. Note that this comparison, unlike in the case of (vertex) open packing, cannot be made based on the fact that both $G\boxtimes H$ and $G\square H$ are spanning subgraphs of $G\circ H$ (in fact, $\rho_{e}^{o}(G_1)$ and $\rho_{e}^{o}(G_2)$ are incomparable in general, where $G_1$ is a spanning subgraph of $G_2$). This in turn leads to lower bounds on $\rho_{e}^{o}(G\boxtimes H)$ and $\rho_{e}^{o}(G\square H)$, in terms of the EOP number and the independence number of the factors, which are widely sharp. Note that the induced matching number of hypercubes has already been determined, and thus we consider this, arguably one of the main instances of Cartesian products of graphs, also with respect to the EOP number, and among other results prove that $\roeo(Q_n)=2^{n-1}$ as soon as $n$ is a power of $2$. 
Finally, in Section~\ref{sec:rooted} we give a closed formula for $\nu_{I}(G\circ_{v}H)$, in which $G\circ_{v}H$ is the rooted product of $G$ and $H$ with root $v$. This leads to the exact formula for the induced matching number of corona product graphs. We also discuss this problem in the case of EOP.


\section{Extremal trees for the bound $\im(T)\leq \eop(T)$}\label{sec-tree}

In this section, we characterize all trees $T$ with $\im(T)=\eop(T)$. First, we recall the basic formulas about the edge open packing numbers and the induced matching numbers of paths.
\begin{proposition}\emph{(\cite{cks})}\label{Pro1} For $n\geq1$,
\[
\eop(P_n)=\begin{cases}
\frac{n+1}{2} & \textrm{if } n\equiv 3\ (\textrm{mod }4),\\
\lceil\frac{n-1}{2}\rceil & \textrm{otherwise.}
\end{cases}
\]
\end{proposition}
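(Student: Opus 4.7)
The plan is to encode an EOP set in $P_n$ as a subset of edge indices subject to one simple forbidden pattern, and then to reduce the counting to a standard independent-set calculation on two paths. I label the edges of $P_n$ by $e_1,\ldots,e_{n-1}$ with $e_i=v_iv_{i+1}$.

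First I would make the structural observation: since $P_n$ is triangle-free, two edges $e_i$ and $e_j$ (with $i<j$) fail to lie together in an EOP set precisely when there is a third edge $e$ such that $e_i,e,e_j$ form a $P_4$; in $P_n$ such an $e$ exists if and only if $j=i+2$, with $e=e_{i+1}$. In particular, consecutive edges $e_i,e_{i+1}$ span only three vertices and hence are always jointly admissible. Thus the EOP sets of $P_n$ are exactly the subsets $S\subseteq\{1,\ldots,n-1\}$ containing no two elements differing by exactly $2$.

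Next I would split the problem by parity: forbidden differences preserve parity, so odd and even indices decouple. On the odd indices $1,3,5,\ldots$ (there are $\lceil (n-1)/2\rceil$ of them), ``differing by $2$'' is precisely ``consecutive in this listing,'' so admissible subsets are in bijection with independent sets in a path on $\lceil (n-1)/2\rceil$ vertices; analogously for the $\lfloor (n-1)/2\rfloor$ even indices. Using $\alpha(P_k)=\lceil k/2\rceil$, I then obtain
$$
\rho_e^o(P_n)\;=\;\left\lceil\tfrac{\lceil (n-1)/2\rceil}{2}\right\rceil+\left\lceil\tfrac{\lfloor (n-1)/2\rfloor}{2}\right\rceil.
$$

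Finally I would do an arithmetic case analysis on $n\bmod 4$. For $n\equiv 0,1,2\pmod 4$ both summands collapse to give $\lceil (n-1)/2\rceil$, matching the second branch of the formula. For $n\equiv 3\pmod 4$, say $n=4m+3$, both parity classes of indices have odd cardinality $2m+1$ and each contributes $m+1$, yielding $2m+2=(n+1)/2$. For the matching lower bound I would exhibit the explicit EOP set $\{e_1,e_2,e_5,e_6,e_9,e_{10},\ldots\}$, possibly adjoining $e_{n-1}$ when $n\equiv 2\pmod 4$. The structural reduction is essentially one line, so the main obstacle is just the bookkeeping in the modular arithmetic; the one delicate residue is $n\equiv 3\pmod 4$, where both parity classes have odd length and each picks up an extra half through the outer ceiling, producing the jump to $(n+1)/2$ in the stated formula.
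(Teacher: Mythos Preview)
The paper does not prove this proposition; it is quoted from \cite{cks} without argument. Your proof is correct and self-contained: the reduction of EOP sets in $P_n$ to subsets of $\{1,\ldots,n-1\}$ avoiding distance exactly $2$ is right, the parity split into two independent-set problems on paths is valid, and the residue-by-residue arithmetic checks out. One small remark: once you have the exact formula $\lceil\lceil(n-1)/2\rceil/2\rceil+\lceil\lfloor(n-1)/2\rfloor/2\rceil$ via $\alpha(P_k)=\lceil k/2\rceil$, the explicit constructions at the end are redundant (the independence-number identity already gives both bounds), though harmless.
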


\begin{proposition}\label{Pro2}
For $n \geq 1$, $\im(P_n)=\lfloor \frac{n+1}{3} \rfloor.$
\end{proposition}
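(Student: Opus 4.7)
The plan is to reduce the problem to a simple integer condition on edge indices, and then produce matching upper and lower bounds.

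First I would label the edges of $P_n$ as $e_i = v_iv_{i+1}$ for $i = 1, \ldots, n-1$ (assuming $n \geq 2$; for $n = 1$ there are no edges and both sides equal $0$). Two distinct edges $e_i, e_j$ with $i < j$ form an induced matching if and only if no edge of $P_n$ joins an endpoint of $e_i$ to an endpoint of $e_j$, which translates cleanly to the condition $j - i \geq 3$. Thus an induced matching of $P_n$ corresponds bijectively to a set of indices $1 \leq i_1 < i_2 < \cdots < i_k \leq n-1$ with $i_{s+1} - i_s \geq 3$ for all $s$.

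For the upper bound, I would observe that such a sequence satisfies $i_k \geq i_1 + 3(k-1) \geq 3k - 2$, and combining with $i_k \leq n - 1$ yields $k \leq (n+1)/3$, hence $k \leq \lfloor (n+1)/3 \rfloor$. For the matching lower bound, the explicit family $\{e_1, e_4, e_7, \ldots, e_{3k-2}\}$ with $k = \lfloor (n+1)/3 \rfloor$ satisfies $3k - 2 \leq n - 1$ (directly from the definition of the floor), and consecutive indices differ by exactly $3$, so this set is an induced matching of size $\lfloor (n+1)/3 \rfloor$.

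I do not expect any serious obstacles here; the only minor check is verifying the boundary cases $n \in \{1,2,3,4\}$ by direct inspection to make sure the floor formula gives the right value when $n-1$ is small (it does: $0,1,1,1$ respectively). The whole argument is a couple of lines, in contrast to Proposition \ref{Pro1}, whose congruence-dependent form reflects the fact that an EOP set in $P_n$ allows adjacent edges with gap $2$ in index (triangles are irrelevant in a tree), making the extremal arrangement more delicate; for induced matchings this subtlety disappears and only the uniform gap-$3$ constraint remains.
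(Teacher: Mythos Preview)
Your argument is correct and complete. The paper itself states Proposition~\ref{Pro2} without proof, treating it as a standard elementary fact; your translation of the induced-matching condition on $P_n$ into the gap constraint $j-i\ge 3$ on edge indices, followed by the counting bound $3k-2\le n-1$ and the explicit construction $\{e_1,e_4,\ldots,e_{3k-2}\}$, is exactly the routine verification one would supply.
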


For a tree $T$, let $L(T)$ and $S(T)$ denote the sets of leaves and support vertices of $T$, respectively. For $\ell_1\ge 1,\ldots,\ell_n\geq1$, the {\it subdivided star} $S(\ell_1,\ldots,\ell_n)$ is the tree obtained from disjoint paths $P_{\ell_1},\ldots,P_{\ell_n}$ by joining a new vertex $x$ to exactly one leaf of each of them. The vertex $x$ is called the \textit{center} of $S(\ell_1,\ldots,\ell_n)$. Notice that $S(\ell_1)\cong P_{\ell_1+1}$, in which only one of the leaves is the center. Note that $S(2,\ldots,2)$ is called a \textit{spider} with $n$ \textit{legs}, in which $\ell_{i}=2$ for each $i\in[n]$, and is shortly denoted by $S^{n}$. Let $v$ be the center of $S^n$. We call all edges incident with $v$ the {\it internal edges}, while the other edges of $S^n$ are {\it pendant edges}. 

We say a tree $T=T(k_1,\ldots,k_n)$ is in a family $\mathcal{F}$ if it is obtained from the disjoint union of $n$ spiders $S_1=S^{k_1},S_2=S^{k_2},\ldots,S_n=S^{k_n}$, $k_i\geq 2$ for each $i\in[n]$, with centers $v_1,\ldots v_n$,  respectively, by adding $n-1$ edges $e_1,\ldots,e_{n-1}$ such that the following conditions hold:
\begin{enumerate}
\item[($C_{1}$)] for every $i\in[n-1]$, $e_i$ has an endvertex in $\{v_1,\ldots,v_n\}$,
\item[($C_{2}$)] for every $i\in[n]$, there exist at least two vertices $x,y \in L(S_i)$ with deg$_T(x)=\textrm{deg}_T(y)=1$.
\end{enumerate}

The above-mentioned edges $e_1,\ldots,e_{n-1}$ are called {\it extra edges} of $T$. Note that if $T\in {\mathcal{F}}$, then it can be recursively obtained from $T'\in \mathcal{F}$ by adding a spider $S^k$ to $T'$ and exactly one edge between $T'$ and $S^k$ so that the conditions ($C_{1}$) and ($C_{2}$)  are satisfied. Moreover, letting $T=T(k_1,\ldots,k_n)\in \mathcal{F}$, we define the tree $G_T$ as follows: $V(G_T)=\{S^{k_1},\ldots,S^{k_n}\}$ and two vertices $S^{k_i}$ and $S^{k_j}$ are adjacent in $G_T$ if and only if there is an edge in $T$ between $S^{k_i}$ and $S^{k_j}$. 

\begin{observation}\label{ob:ImEOPSpider}
If $S=S^k$ is a spider with $k\geq 2$ legs, then $\im(S)=\eop(S)=k$.
\end{observation}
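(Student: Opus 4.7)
The plan is to establish the two bounds $\im(S) \geq k$ and $\eop(S) \leq k$ separately, since $\im(G) \leq \eop(G)$ holds in every graph and so these bounds pin both invariants to $k$.

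For the lower bound, I would exhibit the $k$ pendant edges of $S$ as an induced matching. Writing $v$ for the center, $w_1, \ldots, w_k$ for its neighbors, and $u_i$ for the leaf at the end of leg $i$, the set $\{w_iu_i : 1\le i\le k\}$ is a matching whose endvertices induce no further edges of $S$, since all potential cross-edges between distinct legs would have to be of the form $w_iw_j$, $w_iu_j$, or $u_iu_j$, none of which exist in a spider. This immediately gives $\im(S) \geq k$, and hence also $\eop(S) \geq k$.

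The main work is the upper bound $\eop(S) \leq k$. Taking an arbitrary EOP set $B$, I would carry out a compatibility analysis on the $2k$ edges of $S$, classified into internal edges $f_i = vw_i$ and pendant edges $g_i = w_iu_i$. Using the ``path $e_1ee_2$ or triangle'' reformulation of the EOP condition, a short case check shows that any two internal edges are compatible in $B$ (they share $v$ but no triangle-edge $w_iw_j$ exists), any two pendant edges are compatible, and $f_i$ is compatible with $g_i$ (they share $w_i$ but no triangle-edge $vu_i$ exists), whereas $f_i$ and $g_j$ with $i \neq j$ cannot coexist in $B$, since $f_i f_j g_j$ is an induced $P_4$ in $S$ with $f_j$ serving as the common edge.

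The heart of the argument, and the step I expect to be the main obstacle, is turning this single forbidden configuration into a counting bound on $|B|$. The idea is to case-split on whether some leg $i$ contributes both of its edges to $B$. If $\{f_i,g_i\} \subseteq B$ for some $i$, then the cross-incompatibility of $f_i$ with every $g_j$ and of $g_i$ with every $f_j$ (for $j \neq i$) forces $B \cap \{f_j, g_j\} = \emptyset$ for every $j \neq i$, yielding $|B| = 2 \leq k$ (using $k \geq 2$). Otherwise $|B \cap \{f_i, g_i\}| \leq 1$ for each $i$, and summing over the $k$ legs gives $|B| \leq k$. Either way $\eop(S) \leq k$, and combined with the lower bound this yields $\im(S) = \eop(S) = k$.
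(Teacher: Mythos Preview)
Your proof is correct. The paper states this as an observation without proof, treating the result as immediate, so there is no argument to compare against; your detailed case analysis (pendant edges give $\im(S)\ge k$, and the incompatibility of $f_i$ with $g_j$ for $i\ne j$ forces $\eop(S)\le k$ via the two-case split) is a perfectly sound way to justify it.
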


\begin{observation}\label{l:structureOfEOP}
Let $S=S^k$ be a spider with $k\geq 2$ legs. Each $\eop(S)$-set $F$ is either the set of $k$ pendant edges or the set of $k$ internal edges or if $k=2$, it can also contain one internal edge and its adjacent pendant edge in $S$.
\end{observation}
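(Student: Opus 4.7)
My plan is to label the vertices of $S=S^k$ explicitly and then read off the pairwise compatibility structure of its edges, after which the claim follows by an easy case split on how many internal vs.\ pendant edges belong to $F$.

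First I would fix notation: let $v$ be the center of $S^k$, let $u_1,\dots,u_k$ be the vertices adjacent to $v$, and let $w_i$ be the leaf with $u_iw_i\in E(S)$, so that the internal edges are $vu_i$ and the pendant edges are $u_iw_i$ for $i\in[k]$. Then I would verify the four possible compatibility types. Two internal edges $vu_i,vu_j$ with $i\ne j$ share $v$ and cannot lie on a triangle because $u_iu_j\notin E(S)$, so they are compatible; hence the set of all $k$ internal edges is an EOP set. Two pendant edges $u_iw_i,u_jw_j$ with $i\ne j$ have disjoint endvertex sets and there are no edges between those sets, so they are compatible, and the set of all $k$ pendant edges is an EOP set. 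An internal edge $vu_i$ and its adjacent pendant edge $u_iw_i$ share $u_i$ and do not close a triangle because $vw_i\notin E(S)$, so these two are compatible. Finally, an internal edge $vu_i$ and a non-adjacent pendant edge $u_jw_j$ with $i\neq j$ are \emph{not} compatible, because $vu_j\in E(S)$ produces the path $u_i\,v\,u_j\,w_j$ whose three consecutive edges are precisely $vu_i$, $vu_j$, $u_jw_j$.

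Now let $F$ be any $\eop(S)$-set, so $|F|=k$ by Observation~\ref{ob:ImEOPSpider}. Partition $F=F_I\cup F_P$ with $F_I$ the internal edges and $F_P$ the pendant edges in $F$. If either $F_I$ or $F_P$ is empty, then $F$ is contained in a set of exactly $k$ edges, hence equals the set of all internal or all pendant edges. If both are nonempty, the incompatibility established in the previous paragraph forces the indices of any internal edge in $F_I$ and any pendant edge in $F_P$ to coincide; consequently $|F_I|=|F_P|=1$ and they share the index, so $|F|=2$. This is only possible when $k=|F|=2$, in which case $F$ consists of one internal edge together with its adjacent pendant edge, exactly as stated.

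I do not anticipate any real obstacle: the whole argument is a short enumeration of the four types of edge pairs in $S^k$. The only point requiring a little care is making sure the two legitimate ``overlap'' cases (two internal edges meeting at $v$, and an internal edge meeting its adjacent pendant edge at $u_i$) are correctly classified as compatible under the triangle-and-path definition of a common edge, which is immediate from the absence of the hypothetical closing edges $u_iu_j$ and $vw_i$ in $S^k$.
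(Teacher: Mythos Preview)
Your argument is correct. In the paper this statement is recorded as an Observation and no proof is given, so there is nothing to compare against; your explicit case analysis---checking the four possible pair types and then splitting on whether $F$ contains both internal and pendant edges---is exactly the routine verification the authors presumably had in mind and left to the reader.
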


\begin{lemma}\label{l:im}
If $T=T(k_1,\ldots,k_n)\in \mathcal{F}$, then $\im(T)=k_1+\ldots+k_n$. Moreover, $T$ has a unique $\im(T)$-set.
\end{lemma}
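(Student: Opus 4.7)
First, I would verify that the set $M^{\ast}$ of all pendant edges of $S_1,\ldots,S_n$, which has size $\sum_{i=1}^{n} k_i$, is an induced matching of $T$. Two pendant edges from different legs of the same spider $S_i$ are not joined by any edge of $S_i$, since the only vertex linking distinct legs inside $S_i$ is the center $v_i$, and $v_i$ is not an endpoint of any pendant edge. Pendant edges from different spiders cannot be joined by an extra edge either, because by $(C_{1})$ each extra edge has a center as an endpoint and no center is an endpoint of a pendant edge. This already gives $\nu_I(T)\geq \sum_i k_i$.

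For the upper bound, let $M$ be an arbitrary induced matching in $T$, with $M_0$ the set of extra edges in $M$ and $M_i=M\cap E(S_i)$ for $i\in[n]$. By Observation~\ref{ob:ImEOPSpider}, $|M_i|\leq k_i$. The key structural claim is that each extra edge $e=v_j x\in M_0$ additionally forces: (i) $M_j=\emptyset$, because every edge of $S_j$ has an endpoint in $N_T[v_j]$ and hence is either incident to $e$ or joined to $e$ by an edge of $S_j$; and (ii) if $x\in V(S_\ell)\setminus\{v_\ell\}$ with $\ell\neq j$, then $|M_\ell|\leq k_\ell-1$, since the unique pendant edge of $S_\ell$ incident to $x$ is blocked and any internal edge of $S_\ell$ in $M$ would preclude all other pendant edges of $S_\ell$ through $v_\ell$, forcing $|M_\ell|\leq 1$. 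When $x=v_\ell$, the case (i) applied to $v_\ell$ yields $M_\ell=\emptyset$.

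To close the argument, set $t=|M_0|$ and let $t'$ denote the number of edges in $M_0$ with both endpoints at centers; since $M_0$ is a matching, the set $D_2$ of centers appearing in some edge of $M_0$ satisfies $|D_2|=t+t'$. Let $D_1$ consist of the indices $\ell\notin D_2$ for which a non-center endpoint of some edge of $M_0$ lies in $V(S_\ell)$. Combining the bounds above gives
\[
|M|\;\leq\;t+\sum_{\ell\in D_1}(k_\ell-1)+\sum_{\ell\notin D_1\cup D_2}k_\ell\;=\;t+\sum_{\ell=1}^{n} k_\ell-\sum_{\ell\in D_2}k_\ell-|D_1|,
\]
and since $k_\ell\geq 2$ implies $\sum_{\ell\in D_2}k_\ell\geq 2(t+t')$, we obtain $|M|\leq \sum_\ell k_\ell-t-2t'-|D_1|\leq \sum_\ell k_\ell$. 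Equality forces $t=t'=|D_1|=0$, so $M_0=\emptyset$ and $|M_i|=k_i$ for every $i$. For uniqueness, I use that the only induced matching of size $k_i$ in $S^{k_i}$ with $k_i\geq 2$ is the set of its pendant edges: any internal edge $v_i w_{i,j}$ would preclude its sibling pendant and, through $v_i$, every pendant edge of the other legs, bringing $|M_i|$ down to $1<k_i$. Hence $M=M^{\ast}$.

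The main obstacle is the bookkeeping in the counting step: an extra edge can simultaneously damage two spiders (at its center endpoint and at its other endpoint), and one must ensure that the losses always strictly outweigh the $+1$ the edge contributes to $|M|$. What makes the argument close is exactly condition $(C_{1})$ together with the assumption $k_\ell\geq 2$: each center endpoint of an extra edge absorbs at least $2$ units in $\sum_{\ell\in D_2}k_\ell$, comfortably dominating the $+t$ coming from $|M_0|$.
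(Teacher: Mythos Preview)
Your argument is correct and takes a genuinely different route from the paper's proof. The paper proceeds by induction on $n$: it removes a leaf spider $S$ from the auxiliary tree $G_T$, sets $T'=T-V(S)$, applies the induction hypothesis to $T'\in\mathcal{F}$, and then case-splits on whether the connecting extra edge $e$ lies in a putative $\nu_I(T)$-set. Uniqueness is also obtained inductively, via the uniqueness of the $\nu_I(T')$-set and of the $\nu_I(S)$-set.

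Your approach is a direct global counting argument. The crucial observations---that an extra edge $v_jx$ in $M$ kills all of $M_j$ (claim~(i)) and forces $|M_\ell|\le k_\ell-1$ when $x$ lands in $S_\ell\setminus\{v_\ell\}$ (claim~(ii))---are exactly what drive the inequality, and the bookkeeping with $D_1,D_2,t,t'$ is sound: since $M_0$ is a matching the center-endpoints are distinct, giving $|D_2|=t+t'$, and $k_\ell\ge 2$ yields $\sum_{\ell\in D_2}k_\ell\ge 2(t+t')$, which dominates the $+t$ from $M_0$ with room to spare. The uniqueness step, reducing to the fact that the only $k_i$-sized induced matching in $S^{k_i}$ is its pendant-edge set, is clean.

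What your direct argument buys is a transparent explanation of why $(C_1)$ together with $k_i\ge 2$ is precisely the hypothesis needed: each center endpoint of an extra edge in $M$ costs at least $2$, against a gain of $1$. The paper's inductive setup, on the other hand, introduces the leaf-spider decomposition of $G_T$ that is reused heavily in the subsequent proofs of Theorems~\ref{thm:trees1} and~\ref{thm:trees2}, so its extra machinery pays off later.
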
 
\begin{proof}
We prove that $F=\{e\in E(T)\mid e \textrm{ is a pendant edge of } S_i, i\in [n]\}$ is a unique $\im(T)$-set. We proceed by induction on $n$. If $n=1$, then the equality $\im(T)=\im\big{(}T(k_1)\big{)}=k_1$ follows from Observation \ref{ob:ImEOPSpider}. It is also clear that the set of pendant edges of $T$ is the only induced matching of $T$ of cardinality $k_1$.

Now let $n\geq2$. Since $T=T(k_1,\ldots,k_n)\in \mathcal{F}$, $T$ is obtained from the disjoint union of $n$ spiders $S^{k_1},\ldots,S^{k_n}$ (with $k_1\geq2,\ldots,k_n\geq2$ legs, respectively) by adding $n-1$ edges $e_1,\ldots,e_{n-1}$ satisfying ($C_{1}$) and ($C_{2}$). Let $S$ be a leaf spider of $G_T$ and $T'=T-V(S)$. Without loss of generality, we may assume that $S=S^{k_n}$, which implies that $S$ is a spider with $k_n$ legs and $T'=T'(k_1,\ldots k_{n-1})\in \mathcal{F}$. Let $e$ be the only edge between $T'$ and $S$ in $T$. Let $F'$ be a $\im(T')$-set and $F_S$ a $\im(S)$-set. It follows from the induction hypothesis that $|F'|=k_1+\ldots+k_{n-1}$, $F'$ contains all pendant edges of all $n-1$ spiders of $T'$, $|F_S|=k_n$ and $F_S$ is the set of all pendant edges of $S$. Since at least one endvertex of $e$ is the center of some spider, $F'\cup F_S$ is an induced matching of $T$ of cardinality $k_1+\ldots+k_n$, which implies that $\im(T)\geq k_1+\ldots+k_n$. 

Conversely, let $F$ be a $\im(T)$-set. Suppose first that $e\in F$. Then, $|F\cap E(S)|\leq k_n-1$ as $F$ cannot contain any edge of the leg of $S$ containing the endvertex of $e$ in $S$ or if the endvertex of $e$ in $S$ is the center of $S$, then $F$ cannot contain any edge of $S$. Moreover, $F\cap E(T')$ is an induced matching of $T'$ and thus $|F\cap E(T')|\leq \im(T')=k_1+\ldots+k_{n-1}$. Hence, $|F|=|F\cap E(T')|+|F\cap E(S)|+1\leq k_1+\ldots+k_{n-1}+k_n$. If $e\notin F$, then $|F|=|F\cap E(T')|+|F\cap E(S)|\leq k_1+\ldots+k_n$, where the last inequality holds as $F\cap E(T')$ and $F\cap E(S)$ are induced matchings in $T'$ and $S$, respectively.

Hence, the set $F$ of all pendant edges of all spiders in $T$ is a $\im(T)$-set. Let $F^{*}$ be an arbitrary $\im(T)$-set. In particular, $|F^{*}|=k_1+\ldots+k_n$. If $e\notin F^{*}$, then $F^{*}=\big{(}F^{*}\cap E(T')\big{)}\cup \big{(}F^{*}\cap E(S)\big{)}$. Since $F^{*}\cap E(T')$ is an induced matching of $T'\in \mathcal{F}$, we have $|F^{*}\cap E(T')|\leq k_1+\ldots+k_{n-1}$. Moreover, $|F^{*}\cap E(S)|\leq k_n$ because $F^{*}\cap E(S)$ is an induced matching of $S$. Since $F^{*}=\big{(}F^{*}\cap E(T')\big{)}\cup \big{(}F^{*}\cap E(S)\big{)}$ and $|F^{*}|=k_1+\ldots+k_n$, it follows that $|F^{*}\cap E(T')|=k_1+\ldots+k_{n-1}$ and $|F^{*}\cap E(S)|=k_n$. Thus, $F^{*}\cap E(T')$ is a $\im(T')$-set and $F^{*}\cap E(S)$ is a $\im(S)$-set. By the induction hypothesis, both $F^{*}\cap E(T')$ and $F^{*}\cap E(S)$ are unique (consisting of pendant edges). Hence, there exists exactly one $\im(T)$-set that does not contain $e$.

Suppose now that $e\in F^{*}$. Suppose first that the center $v$ of $S$ is an endvertex of $e$. Then, $F^{*}\cap E(S)=\emptyset$. Since $F^{*}\cap E(T')$ is an induced matching of $T'$, we get $|F^{*}\cap E(T')|\leq \im(T')=k_1+\ldots+k_{n-1}$. Hence, $|F^{*}|=|F^{*}\cap E(T')|+1\leq k_1+\ldots+k_{n-1}+1<k_1+\ldots k_n$, a contradiction. Thus, $v$ is not an endvertex of $e$. Since $T\in \mathcal{F}$, ($C_{1}$) implies that the endvertex of $e$ in $T'$ is the center of a spider $S'$ in $T'$. Since $e\in F^{*}$, $F^{*}\cap E(S')=\emptyset$. Hence, $F^{*}\cap E(T')$ is an induced matching of $T'$ different from the unique $\im(T')$-set. Thus, $|F^{*}\cap E(T')|<\im(T')=k_1+\ldots+k_{n-1}$. Since $e\in F^{*}$, it is also clear that $|F^{*}\cap E(S)|\leq k_n-1$. Hence, $|F^{*}|=|F^{*}\cap E(T')|+|F^{*}\cap E(S)|+1<k_1+\ldots k_n$, a contradiction. Thus, $e$ is not contained in any $\im(T)$-set.  
\end{proof}

\begin{theorem}
\label{thm:trees1}
If $T\in {\mathcal{F}}\cup \{P_{1},P_{2}\}$, then $\im(T)=\eop(T)$.
\end{theorem}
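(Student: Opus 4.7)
The plan is to prove the reverse inequality $\rho_e^o(T) \le \nu_I(T)$ by induction on $n$, the number of constituent spiders of $T$, since $\nu_I(T) \le \rho_e^o(T)$ holds for all graphs. The cases $T \in \{P_1, P_2\}$ are trivial, and the base case $n = 1$ (a single spider $S^k$ with $k \ge 2$) is covered by Observation~\ref{ob:ImEOPSpider}.

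\textbf{Inductive setup.} For $T \in \mathcal{F}$ with $n \ge 2$, I would mirror the decomposition in the proof of Lemma~\ref{l:im}: choose a leaf spider $S = S^{k_n}$ of $G_T$, set $T' = T - V(S) \in \mathcal{F}$, and let $e = xu$ be the unique edge joining $S$ to $T'$, with $x \in V(S)$ and $u \in V(T')$. For any EOP set $B$ of $T$, write
\[
|B| \;=\; |B \cap E(T')| \;+\; |B \cap E(S)| \;+\; [e \in B].
\]
The inductive hypothesis yields $|B \cap E(T')| \le k_1 + \ldots + k_{n-1}$ and Observation~\ref{ob:ImEOPSpider} gives $|B \cap E(S)| \le k_n$, so the bound $|B| \le k_1 + \ldots + k_n$ is immediate when $e \notin B$; the induction thus reduces to the case $e \in B$.

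\textbf{Structure when $e \in B$.} Here I would split on the position of $x$ in $S$ (the center $v_n$, a middle vertex, or a leaf of a leg), noting that by $(C_1)$ the cases $x \ne v_n$ force $u$ to be a center of a spider of $T'$, and that when $x$ is a leaf of $S$ the condition $(C_2)$ forces $k_n \ge 3$. If $|B \cap E(S)| \le k_n - 1$ the desired bound is immediate, so assume $|B \cap E(S)| = k_n$. Observation~\ref{l:structureOfEOP} pins down $B \cap E(S)$ to one of very few shapes, and in each shape at least one edge of $B \cap E(S)$ is incident to $x$. Since $e$ is the only edge of $T$ between $V(S)$ and $V(T')$, a direct common-edge computation yields two constraints on every $g \in B \cap E(T')$: compatibility with an edge of $B \cap E(S)$ incident to $x$ (via the common edge $e$) rules out $g$ being incident to $u$, while compatibility with $e$ itself rules out $g$ having an endpoint adjacent to $u$ in $T'$. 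Consequently both endpoints of every $g \in B \cap E(T')$ lie in $V(T') \setminus N_{T'}[u]$, i.e.\ $B \cap E(T') \subseteq E(T' - N_{T'}[u])$.

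\textbf{Main obstacle.} The crux of the proof is the decrement bound
\[
\rho_e^o(T' - N_{T'}[u]) \;\le\; \rho_e^o(T') - 1,
\]
which, combined with the inductive hypothesis, gives $|B| \le (k_1 + \ldots + k_{n-1} - 1) + k_n + 1 = k_1 + \ldots + k_n$ as required. I would establish this bound by showing that any EOP set $B''$ of $T' - N_{T'}[u]$ admits an extension to a strictly larger EOP set of $T'$: if $u = v_j$ is a center, append all $k_j$ pendant edges of $S_j$; if $u = z_{j,\ell}$ is a middle vertex, append the pendant $z_{j,\ell} w_{j,\ell}$; and if $u = w_{j,\ell}$ is a leaf, append either $z_{j,\ell} w_{j,\ell}$ or the internal $v_j z_{j,\ell}$, whichever avoids a common edge with the edges of $B''$. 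Compatibility hinges on the fact that every endpoint of an edge of $B''$ lies outside $N_{T'}[u]$, so no edge of $T'$ can serve as a common edge joining an appended pendant to an edge of $B''$. The genuine technical work, and the main obstacle, is verifying this compatibility across all sub-configurations, especially when $T'$ contains extra edges incident to middle or leaf vertices of $S_j$, where the naive pendant one would add may need to be shifted to a different leg; once the decrement lemma is in place the induction closes.
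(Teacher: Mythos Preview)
Your overall plan---induct on the number of spiders, isolate a leaf spider $S$, and handle the case $e\in B$ with $|B\cap E(S)|=k_n$ via a decrement bound $\rho_e^o(T'-N_{T'}[u])\le\rho_e^o(T')-1$---is a reasonable line of attack and is genuinely different from the paper's. The paper instead splits on whether \emph{some} $\rho_e^o(T)$-set omits an extra edge (if so, cut there and recurse on both halves, each in $\mathcal F$), and otherwise works under the strong hypothesis that \emph{every} $\rho_e^o(T)$-set contains \emph{all} extra edges. That hypothesis forces degree constraints (e.g.\ in the leaf subcase $u=u_j'$ it gives $\deg_T(u_j')=\deg_T(v_j')=2$, so $N_{T'}[u]=\{u_j',v_j'\}$ and $T''=T'-\{u_j',v_j'\}\in\mathcal F$, whence induction applies directly), and in the remaining subcases the paper constructs by an explicit swap an alternative $\rho_e^o(T)$-set missing an extra edge, contradicting the hypothesis. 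So the paper never needs a free-standing decrement lemma.

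Your decrement lemma, by contrast, is not adequately justified, and the concrete extensions you propose fail in simple cases. Take $T'$ to be two copies of $S^3$ with centers $v_1,v_2$ and the single extra edge $v_2w_{1,3}$; take $u=z_{1,3}$ (middle vertex of $S_1$). Then $T'-N_{T'}[u]$ has the $\rho_e^o$-set
\[
B''=\{\,z_{1,1}w_{1,1},\ z_{1,2}w_{1,2},\ v_2z_{2,1},\ v_2z_{2,2},\ v_2z_{2,3}\,\}.
\]
Your proposed extension by the pendant $z_{1,3}w_{1,3}$ fails: $w_{1,3}v_2$ is a common edge with $v_2z_{2,1}$. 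Shifting to ``a different leg'' is impossible since $z_{1,1}w_{1,1}$ and $z_{1,2}w_{1,2}$ are already in $B''$, and every edge through $v_1$ has a common edge ($v_1z_{1,1}$ or $v_1z_{1,2}$) with an edge of $B''$. The only valid extension here is by the extra edge $w_{1,3}v_2$, which lies outside your list entirely. Similar failures occur in the center case (``append all $k_j$ pendant edges of $S_j$'' breaks as soon as some $z_{j,\ell}$ or $w_{j,\ell}$ carries an extra edge to a center appearing in $B''$). Thus the decrement lemma, while plausibly true, needs a substantially more delicate argument than the one you sketch; at minimum the extension must be allowed to use extra edges of $T'$, and the case analysis is then of comparable difficulty to the paper's swap arguments.
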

\begin{proof}
The result is trivial when $|V(T)|\leq2$. So, we let $T=T(k_1,\ldots,k_n)\in \mathcal{F}$. We proceed by induction on $n\geq1$. If $n=1$, then it follows form Observation~\ref{ob:ImEOPSpider} that $\im(T)=\eop(T)=k_1$. Let $n\geq2$. Since an induced matching is also an EOP set, $\eop(T)\geq \im(T)=k_1+\ldots+k_n$, where the last equality follows from Lemma \ref{l:im}. Hence, it remains for us to prove that $\eop(T)\leq k_1+\ldots+k_n$.

Suppose first that there exists a $\eop(T)$-set $F$ such that $e\notin F$ for some $e\in \{e_1,\ldots,e_{n-1}\}$. Let $T_1$ and $T_2$ be the components of $T-e$. Clearly, $T_1,T_2\in \mathcal{F}$. Without loss of generality, we write $T_1=T_1(k_1,\ldots,k_\ell)$ and $T_2=T_2(k_{\ell+1},\ldots,k_n)$. Notice that $F\cap E(T_1)$ and $F\cap E(T_2)$ are EOP sets in $T_1$ and $T_2$, respectively. So, we get $|F|=|F\cap E(T_1)|+|F\cap E(T_2)|\leq \eop(T_1)+\eop(T_2)=k_1+\ldots+k_\ell+k_{\ell+1}+\ldots+k_n$ by the induction hypothesis.

Hence, we may now assume that $\{e_1,\ldots,e_{n-1}\}\subseteq F$ for any $\eop(T)$-set $F$. Let $S=S^{\ell}$ be a leaf spider of $G_T$ and $T'=T-V(S)$. It follows from the definition of $\mathcal{F}$ that $T'\in \mathcal{F}$. Moreover, we may assume that $\ell=k_n$, and hence $T'=T'(k_1,\ldots,k_{n-1})$. Let $e$ be the only edge between $T'$ and $S$ in $T$ and let $S'$ be the spider of $T'$ that contains an endvertex of $e$. Furthermore, we may suppose that $S'=S^{k_{n-1}}$. Denote by $v$ and $v'$ the centers of $S$ and $S'$, respectively, by $v_1,\ldots,v_{k_n}$ and $v_1',\ldots,v_{k_{n-1}}'$ the support vertices of $S$ and $S'$, respectively, and by $u_1,\ldots ,u_{k_n}$ and $u_1',\ldots,u_{k_{n-1}}'$ the leaves of $S$ and $S'$, respectively. Note that since $S$ is a leaf spider in $G_T$, deg$_T(x)=\textrm{deg}_S(x)$ for all $x\in V(S)\setminus \{a\}$, where $a$ is the endvertex of $e$ in $S$.

Suppose first that $|F\cap E(S)|<k_n$. Since $F\cap E(T')$ and $F\cap E(S)$ are EOP sets of $T'$ and $S$, respectively, we get $|F|=|F\cap E(T')|+|F\cap E(S)|+1<\eop(T')+1+k_n$. By applying the induction hypothesis to $\eop(T')$, we get $|F|\leq k_1+\ldots+k_n$, as desired. Suppose now that $|F\cap E(S)|=k_n$, which means $F\cap E(S)$ is a $\eop(S)$-set. Since $e\in F$ and $|F\cap E(S)|=\eop(S)$, we derive from Observation \ref{l:structureOfEOP} that $F$ contains an edge of $S$ adjacent to $e$. (Indeed, this is clear when $F\cap E(S)$ is either the set of pendant edges or the set of  internal edges of $S$, while if $k=2$ and $F\cap E(S)$ contains one internal edge and its adjacent pendant edge in $S$, then note that $e$ cannot be incident with a leaf of $S$ due to condition $(C_2)$.) Consequently, no edge of $F\cap E(S')$ is adjacent to $e$. We need to distinguish two cases.

\textit{Case 1.} $e$ is incident with $v$. As $e\in F$ and $|F\cap E(S)|=k_n$, all internal edges of $S$ are in $F$.

{\it Subcase 1.1.} $e=vv'$. Because all extra edges of $T$ are in $F$, we have deg$_T(v')=k_{n-1}+1$ and deg$_T(v_i')=2$ for any $i\in [k_{n-1}]$. Since $T\in \mathcal{F}$, it follows from ($C_{2}$) that there exists $j\in [k_{n-1}]$ such that deg$_T(u_j')=1$. Moreover, since $F$ is an EOP set and $e,vv_1,\ldots,vv_{k_n}\in F$, we deduce that $v'v_i',v_i'u_i'\notin F$ for each $i\in[k_{n-1}].$ Hence, $F'=F\setminus(\{e,vv_1,\ldots,vv_{k_n}\})\cup(\{v_j'u_j',v_1u_1,\ldots,v_{k_n}u_{k_n}\})$ is a $\eop(T)$-set that does not contain all extra edges of $T$, a contradiction.
 
{\it Subcase 1.2.} $e=vv_j'$ for some $j \in [k_{n-1}]$. Because $F$ is an EOP set containing all extra edges of $T$, we have deg$_T(v')=k_{n-1}$, deg$_T(v_j')=3$ and deg$_T(u_j')=1$. Therefore, $F'=F\setminus(\{e,vv_1,\ldots,vv_{k_n}\})\cup(\{v_j'u_j', v_1u_1,\ldots,v_{k_n}u_{k_n}\})$ is a $\eop(T)$-set that does not contain all extra edges of $T$, a contradiction.

{\it Subcase 1.3.} $e=vu_j'$ for some $j\in[k_{n-1}].$ Since $F$ contains all extra edges of $T$, it follows that $\deg_T(v_j')=\deg_T(u_j')=2$. Since $e\in F$ and $v$ is incident with an edge from $F$ different from $e$, it holds that $u_j'v_j',v_j'v'\notin F$. From ($C_{2}$), it follows that $k_{n-1}\geq 3$ and that there exist $s,t\in[k_{n-1}]$ such that deg$_T(u_s')=\textrm{deg}_T(u_t')=1$. Note that $T''=T'-\{u_j',v_j'\}$ is also in family $\mathcal{F}$ \big{(}$T''=T''(k_1,\ldots ,k_{n-2},k_{n-1}-1)$\big{)} as $S'$ still has two leaves $u_s',u_t'$ that are not incident with any extra edge. Since $F\cap E(T')$ is an EOP set of the tree $T''$, it follows from induction hypothesis that $|F \cap E(T'')|\leq k_1+\ldots+k_{n-2}+k_{n-1}-1$. Thus, $|F|=|F\cap E(S)|+1+|F\cap E(T'')|\leq k_1+\ldots+k_n$ as desired. 

\textit{Case 2.} $e$ is not incident with $v$. Then, it follows from ($C_{1}$) that $v'$ is an endvertex of $e$ and hence there exists $i \in [k_n]$ such that $e$ is either $v'v_i$ or $v'u_i$. Since $e\in F$ and $F$ contains an edge in $S$ adjacent to $e$, it follows that $F\cap E(S')=\emptyset$. Because all extra edges of $T$ are in $F$, we deduce that no extra edge is incident with $v_j'$ for any $j\in [k_{n-1}]$. On the other hand, ($C_{2}$) implies that there exists $j\in[k_{n-1}]$ such that deg$_T(u_j')=1$. Hence, $F'=(F\setminus \{e\})\cup \{u_j'v_j'\}$ is a $\eop(T)$-set not having the extra edge $e$, which is impossible.
\end{proof}

Recall that a tree is a \textit{wounded spider} if it can be obtained from a nontrivial star $K_{1,t}$, where $t\geq1$, by subdividing at most $t-1$ of its edges exactly once.

\begin{lemma}\label{l:spider}
If $\nu_I\big{(}S(\ell_1,\ldots,\ell_k)\big{)}=\rho_e^o\big{(}S(\ell_1,\ldots,\ell_k)\big{)}$, then $S(\ell_1,\ldots,\ell_k)\in \{P_{2},P_{5}\}$ or $S(\ell_1,\ldots,\ell_k)$ is a spider with $k\geq3$.
\end{lemma}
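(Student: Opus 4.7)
The plan is to prove the contrapositive: assume $S := S(\ell_1, \ldots, \ell_k)$ is neither $P_2$ nor $P_5$, nor a spider with $k \geq 3$, and show $\im(S) < \eop(S)$. Throughout, write $x$ for the center of $S$ and $v_{i,1}, \ldots, v_{i, \ell_i}$ for the $i$-th leg, with $v_{i,1}$ adjacent to $x$. The argument splits on $k \leq 2$ versus $k \geq 3$.

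When $k \leq 2$, $S$ is a path $P_n$ with $n = \ell_1 + 1$ or $n = \ell_1 + \ell_2 + 1$. Comparing $\im(P_n) = \lfloor (n+1)/3 \rfloor$ with $\eop(P_n)$ via Propositions~\ref{Pro1} and~\ref{Pro2}, and case-splitting on $n \bmod 4$, a short check shows $\im(P_n) = \eop(P_n)$ only when $n \in \{1, 2, 5\}$; since $\ell_i \geq 1$ forces $n \geq 2$, the only surviving paths are $P_2$ and $P_5$.

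For $k \geq 3$ with $S$ not a spider, some $\ell_i \neq 2$, and the idea is to exhibit an EOP set strictly larger than any induced matching. Two natural lower bounds on $\eop(S)$ compete: keeping $x$ out of the endvertex set decouples the legs and yields $\eop(S) \geq \sum_i \eop(P_{\ell_i})$, while the full center star $\{xv_{i,1} : i \in [k]\}$ together with a maximum EOP set of each tail subpath $v_{i,3}\cdots v_{i,\ell_i}$ yields $\eop(S) \geq k + \sum_i \eop(P_{\ell_i - 2})$ (using the convention $\eop(P_m) = 0$ for $m \leq 0$). The latter is EOP because its endvertex-induced subgraph is $K_{1,k}$ at $x$ plus disjoint tail stars, with no cross-connections. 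On the induced-matching side, any maximum induced matching uses at most one internal edge $xv_{j,1}$ (two would share $x$), which yields
\[
\im(S) \leq \max\!\left\{\sum_i \im(P_{\ell_i}),\ \max_{j \in [k]} \Bigl(1 + \im(P_{\ell_j - 2}) + \sum_{i \neq j} \im(P_{\ell_i - 1})\Bigr)\right\}.
\]
A further sub-case analysis then closes the argument: if some $\ell_i = 1$, the center-based bound of size at least $k$ already beats the matching bound, since a length-$1$ leg contributes nothing in either branch of the $\im(S)$ estimate, and any large remaining $\ell_j$ contributes extra via the tail term; if instead all $\ell_i \geq 2$ and some $\ell_i \geq 3$, the per-leg strict inequality $\eop(P_m) > \im(P_m)$ for $m \notin \{1, 2, 5\}$ gives $\sum_i \eop(P_{\ell_i}) > \sum_i \im(P_{\ell_i})$ unless all $\ell_i \in \{2, 5\}$, in which case the center-based bound $k + \sum_i \eop(P_{\ell_i - 2})$ adds $\eop(P_3) = 2$ per length-$5$ leg and dominates.

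The main obstacle I expect is precisely this borderline bookkeeping: when multiple legs have short lengths (especially $\ell_i \in \{2, 3, 5\}$), neither of the two lower bounds individually dominates every branch of the induced-matching upper bound, so one must compare them carefully and read off Propositions~\ref{Pro1} and~\ref{Pro2} through the various residues of $\ell_i \bmod 3$ and $\ell_i \bmod 4$. Verifying that the center-plus-tail construction is indeed EOP (i.e.\ that no unintended edges appear in the endvertex-induced subgraph between different tails or between a tail and the central star) requires checking the explicit adjacencies of $S$ but is straightforward once one recognises that $v_{i,s}$ and $v_{j,t}$ are non-adjacent whenever $i \neq j$ or $|s-t| > 1$.
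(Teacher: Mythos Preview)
Your overall strategy is sound and mirrors the paper's: both argue by contrapositive, both exploit the two competing EOP constructions (legs only versus center star plus tails), and both bound $\im(S)$ by splitting on whether an internal edge is used. The inequalities you write down for $A:=\sum_i\eop(P_{\ell_i})$, $B:=k+\sum_i\eop(P_{\ell_i-2})$, and the two-branch upper bound on $\im(S)$ are all correct.

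However, your sub-case analysis has a concrete gap. In the case ``some $\ell_i=1$'' you assert that the center-based bound $B$ already beats the matching bound, but take $(\ell_1,\ell_2,\ell_3)=(1,3,3)$: here $B=3+0+0+0=3$, while the second branch of your $\im$ bound with $j=1$ gives $1+\im(P_{-1})+2\,\im(P_2)=3$ as well, and indeed $\im(S)=3$ (realised by $\{xv_{1,1},\,v_{2,2}v_{2,3},\,v_{3,2}v_{3,3}\}$). So $B$ does \emph{not} strictly beat $C$; you must fall back on $A=\eop(P_1)+2\,\eop(P_3)=0+2+2=4$ instead. More generally, whenever a short leg of length $1$ coexists with legs of length $3$, the center construction gains nothing from the tails while the matching can still pick up the far edge of each length-$3$ leg, so $B=C$ is possible.

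A second, smaller gap: in your sub-case ``all $\ell_i\ge2$, some $\ell_i\ge3$'' you only compare $A$ with the \emph{first} branch $\sum_i\im(P_{\ell_i})$ of $C$. The second branch can be strictly larger (e.g.\ all $\ell_i=4$ gives first branch $k$ but second branch $k+1$), so you still owe an argument there. It does go through---one checks that the second branch can exceed the first by at most $1$, and that this forces all but one $\ell_i\equiv1\pmod 3$, which is incompatible with $A$ exceeding the first branch by only $1$---but this is exactly the ``borderline bookkeeping'' you flagged, and it needs to be written out. The paper sidesteps some of this by organising the case analysis on $\ell_{\max}\in\{2,3,4,\ge5\}$ rather than on whether some $\ell_i=1$; that turns out to localise the arithmetic more cleanly.
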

\begin{proof}
Let $\ell_1\leq \ldots \leq \ell_k$. We root $T=S(\ell_1,\ldots,\ell_k)$ at the center $x$ and denote the vertices at distance $i$ from $x$ by $v_i^j$ for all $j\in[k]$. If $k\in \{1,2\}$, then $T\cong P_{n}$ for some $n\geq2$. Then, it follows from Propositions \ref{Pro1} and \ref{Pro2} that $n\in \{2,5\}$. So, we may assume that $k\geq3$. Moreover, let $t_{1},t_{2},\ldots,t_{\ell_{k}}$ be the number of the paths on $1,2,\ldots,\ell_{k}$ vertices, respectively. Clearly, $\ell_{k}$ must be at least $2$. We distinguish the following cases depending on $\ell_{k}$.

\textit{Case 1.} $\ell_{k}=2$. Then, $T$ is either a spider or a wounded spider. If $T$ is a spider, we are done. So, let $T$ be a wounded spider. In particular, we have $\ell_1=1$. We then immediately infer that $\nu_{I}(T)\leq k-1<k=\rho_{e}^{o}(T)$, a contradiction.

\textit{Case 2.} $\ell_{k}=3$. If $t_{1}=0$ (or, equivalently $\ell_{1}\geq2$), then the two edges from $P_{\ell_{k}}$ and one edge from any other leg form an EOP set of $T$ of cardinality $k+1$, while any induced matching contains at most one edge from each leg. Therefore, $\rho_{e}^{o}(T)\geq k+1>k=\nu_{I}(T)$, which is impossible. So, it happens that $t_{1}\geq1$. Let $t_{1}\geq2$. Then, any induced matching contains at most one edge from each leg with at least two edges and one edge from the $t_{1}$ pendant edges incident with $x$. Hence, $\nu_{I}(T)=1+k-t_{1}<k\leq \rho_{e}^{o}(T)$, a contradiction. Now let $t_{1}=1$. If $t_{2}\geq1$, then any induced matching of $T$ contains at most one pendant edge from the first two shortest legs and one edge from any other leg. Hence, $\nu_{I}(T)=1+k-2<k\leq \rho_{e}^{o}(T)$. If $t_{2}=0$, then the edges from the paths on three vertices form an EOP set of $T$ of cardinality $2(k-1)$. Hence, $\rho_{e}^{o}(T)\geq2(k-1)>k=\nu_{I}(T)$. Either case leads to a contradiction.  

\textit{Case 3.} $\ell_{k}=4$. Clearly, $\rho_{e}^{o}(T)\geq k+1$. If $\ell_{i}\geq3$ for each $i\in[k]$, then any $\nu_{I}(T)$-set has one edge from each path and one edge from the edges incident with $x$, while $\rho_{e}^{o}(T)=2k$ by a similar argument. So, $\rho_{e}^{o}(T)>k+1=\nu_{I}(T)$. Therefore, $\ell_{i}\leq2$ for some $i\in[k-1]$. Let $Q$ be a $\nu_{I}(T)$-set. If $Q$ contains one edge from the edges incident with $x$, then it has no edge from the $t_{1}+t_{2}$ paths on at most two vertices. Moreover, it has at most one edge from any other path. Thus, we get $\nu_{I}(T)\leq1+k-t_{1}-t_{2}\leq k<\rho_{e}^{o}(T)$. On the other hand, if $Q$ has no edge from the edges incident with $x$, it is readily seen that $\nu_{I}(T)\leq k<\rho_{e}^{o}(T)$. So, we have a contradiction in either case.
    
\textit{Case 4.} $\ell_{k}\geq5$. Let $Q$ be a $\nu_{I}(T)$-set having no edge incident with $x$. Then, $\nu_{I}(T)=\sum_{i=1}^{k}\nu_{I}(P_{\ell_{i}})\leq \sum_{i=1}^{k}\rho_{e}^{o}(P_{\ell_{i}})\leq \rho_{e}^{o}(T)$. Since $\im(T)=\eop(T)$, this implies that $\nu_{I}(P_{\ell_{i}})=\rho_{e}^{o}(P_{\ell_{i}})$ for each $i\in[k]$. Therefore, $\ell_{i}\in \{1,2,5\}$ for each $i\in[k]$. In such a situation, all edges incident with $x$ along with the last two edges from each longest leg form an EOP set of $T$ of cardinality $k+2t_{5}>t_{2}+2t_{5}=\nu_{I}(T)$. Finally, let $Q'$ be a $\nu_{I}(T)$-set having one edge incident with $x$. It is then readily seen that $\nu_{I}(T)\leq1+\sum_{i:\ell_{i}\geq3}\nu_{I}(P_{\ell_{i}-1})$. Let $A$ consist of all edges incident with $x$ along with the edges of a $\rho_{e}^{o}(P_{\ell_{i}}-\{v_{1}^{\ell_{i}},v_{2}^{\ell_{i}}\})$-set for each $i$ with $\ell_{i}\geq4$. Note that $A$ is an EOP set of $T$ of cardinality $k+\sum_{i:\ell_{i}\geq4}\rho_{e}^{o}(P_{\ell_{i}-2})$. On the other hand, $\rho_{e}^{o}(P_{n-1})\geq \nu_{I}(P_{n})$ for all $n\geq3$. With the above observations in mind, we get
\begin{equation}\label{Equ-EOP}
\nu_{I}(T)\leq1+\sum_{i:\ell_{i}\geq3}\nu_{I}(P_{\ell_{i}-1})=1+\sum_{i:\ell_{i}\geq4}\nu_{I}(P_{\ell_{i}-1})+\sum_{i:\ell_{i}=3}\nu_{I}(P_{\ell_{i}-1})\leq \sum_{i:\ell_{i}\geq4}\rho_{e}^{o}(P_{\ell_{i}-2})+1+|\{i\mid \ell_{i}=3\}|.
\end{equation}
Because $\ell_{k}\geq5$, we have $k\geq 1+|\{i\mid \ell_{i}=3\}|$. Furthermore, equality holds if and only if $\ell_{1}=\ldots=\ell_{k-1}=3$. If this is the case, then 
\begin{center}
$\nu_{I}(T)\leq1+|\{i\mid \ell_{i}=3\}|+\nu_{I}(P_{\ell_{k}})<2|\{i\mid \ell_{i}=3\}|+\rho_{e}^{o}(P_{\ell_{k}-1})\leq \rho_{e}^{o}(T)$,
\end{center}
contradicting the assumption. Consequently, $k>1+|\{i\mid \ell_{i}=3\}|$. In view of this and the inequality chain (\ref{Equ-EOP}), we have $\nu_{I}(T)<\rho_{e}^{o}(T)$, a contradiction.

Altogether, we have proved that $\ell_1=\ldots=\ell_k=2$. This completes the proof.
\end{proof}

\begin{theorem}\label{thm:trees2}
If $T$ is a tree with $\eop(T)=\im(T)$, then $T\in {\mathcal{F}}\cup \{P_1,P_2\}$. 
\end{theorem}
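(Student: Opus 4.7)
The plan is to prove the statement by induction on $|V(T)|$, assuming $\im(T)=\eop(T)$ throughout. The base case $|V(T)|\le 2$ is immediate ($T\in\{P_1,P_2\}$). For the inductive step: if $T$ is itself a subdivided star, Lemma~\ref{l:spider} immediately places $T$ in $\{P_2,P_5\}\cup\{S^k\colon k\ge 3\}\subseteq \mathcal F\cup\{P_1,P_2\}$ (noting $P_5=S^2\in\mathcal F$ with $n=1$ and $k_1=2$). So I may assume $T$ has at least two branching vertices (vertices of degree $\ge 3$).

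In that case, the plan is to peel off a pendant spider. Choose a \emph{leaf branching vertex} $c$ of $T$: a branching vertex such that exactly one component of $T-c$ contains a further branching vertex. Such $c$ exists by picking a branching vertex maximizing the distance from a fixed branching vertex. Let $c'$ be the neighbor of $c$ in the distinguished component and let $\widetilde T_c$ be the union of $c$ with the remaining (path) components of $T-c$. By construction $\widetilde T_c=S(\ell_1,\ldots,\ell_m)$ is a subdivided star with center $c$ and $m=\deg_T(c)-1\ge 2$ legs.

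The crux is the claim $\widetilde T_c=S^m$, i.e., $\ell_1=\cdots=\ell_m=2$. The argument parallels the four-case analysis of Lemma~\ref{l:spider}: if some $\ell_i\in\{1,3,4\}$ or $\ell_i\ge 5$, then combining a carefully chosen EOP set inside $\widetilde T_c$ (whose admissible structures are dictated by Observation~\ref{l:structureOfEOP}) with an optimum EOP set on the remainder $T-(V(\widetilde T_c)\setminus\{c\})$ produces an EOP set of $T$ strictly exceeding every induced matching of $T$, contradicting the equality. The new ingredient beyond Lemma~\ref{l:spider} is the edge $cc'$, which may lie in or out of the constructed EOP and which interacts both with edges of $\widetilde T_c$ adjacent to $c$ and with edges of $T$ incident to $c'$; each of the four cases requires a modest bookkeeping extension to accommodate this.

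Once $\widetilde T_c=S^m$ is established, set $T^*=T-(V(\widetilde T_c)\setminus\{c\})$. An argument in the spirit of Lemma~\ref{l:im} (all $m$ pendant edges of $\widetilde T_c$ must lie in any optimum induced matching or EOP set, and the remaining edges restrict optimally to $T^*$) yields $\im(T)=\im(T^*)+m$ and $\eop(T)=\eop(T^*)+m$, so $\im(T^*)=\eop(T^*)$. Induction then gives $T^*\in\mathcal F\cup\{P_1,P_2\}$, and reattaching $\widetilde T_c$ via $cc'$ exhibits $T$ as a member of $\mathcal F$: condition $(C_1)$ holds because $c$ is the center of the new spider, and $(C_2)$ holds for $\widetilde T_c$ because the attachment is at $c$, so all $m$ of its leaves retain degree $1$ in $T$. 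The principal obstacle is twofold: first, carrying out the extended case analysis for $\widetilde T_c=S^m$; second, verifying $(C_2)$ for the spiders of the $T^*$-decomposition after addition of $cc'$. The delicate subcase is when $c'$ is a degree-$1$ leaf of a spider $S_j$ of $T^*$ whose $(C_2)$-witnessing leaves become insufficient in $T$; one rules this out by directly exhibiting, in that configuration, an EOP set of $T$ of size greater than $\im(T)$, contradicting the equality assumption.
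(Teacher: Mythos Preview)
Your decomposition has two concrete failures.

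\medskip
\textbf{The claim $\widetilde T_c=S^m$ is false.} Take $T\in\mathcal F$ built from a spider $S^3$ with center $v_1$ and a spider $S^2$ with center $p_3$ (so $S^2$ is the path $p_1p_2p_3p_4p_5$), joined by the extra edge $v_1p_2$. The branching vertices are $v_1$ and $p_2$; your leaf branching vertex is $c=p_2$, and $\widetilde T_c=S(1,3)\cong P_5$ with $c$ a support vertex, not the middle vertex. So the legs are not all of length $2$. Your appeal to a ``four-case analysis like Lemma~\ref{l:spider}'' cannot force $\ell_1=\cdots=\ell_m=2$ here, because Lemma~\ref{l:spider} itself allows the $P_5$ outcome. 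The paper explicitly keeps this $P_5$ possibility as a separate case throughout.

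\medskip
\textbf{Keeping $c$ in $T^*$ breaks the induction.} Even when $\widetilde T_c$ \emph{is} a spider $S^m$, your equations $\eop(T)=\eop(T^*)+m$ and hence $\im(T^*)=\eop(T^*)$ fail. Take $T$ to be two copies of $S^2$ with centers $v_1,v_2$ joined by the edge $v_1v_2$; then $T\in\mathcal F$ and $\im(T)=\eop(T)=4$. With $c=v_2$ and $m=2$, your $T^*$ is the wounded spider $S(2,2,1)$ with center $v_1$ and pendant leaf $v_2$, for which $\im(T^*)=2$ but $\eop(T^*)=3$ (the star at $v_1$). So $\eop(T^*)+m=5\neq 4=\eop(T)$, and $\im(T^*)\neq\eop(T^*)$, so the induction hypothesis does not apply to $T^*$. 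The underlying reason: an optimal EOP set of $T^*$ may use the edge $cc'$ as part of a star centred at $c'$, and that edge is no longer compatible with the pendant edges of $\widetilde T_c$ once you reattach the spider. The paper avoids this by removing $c$ as well (its $T'=T-V(T_x)$), so that $cc'$ becomes the new extra edge rather than an edge of the smaller tree.

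\medskip
Even after repairing the decomposition to match the paper's, the verification of $(C_2)$ for the spider containing $c'$ is the genuinely hard step and is not a one-line ``exhibit a bigger EOP set'' argument. In the paper this is Claim~5: one first chooses the spider partition of $T'$ so that the spider $S^{k_1}$ containing $c'$ has as many legs as possible, then introduces an auxiliary family $\mathcal S$ of spiders reachable from $S^{k_1}$ through a specific pattern of extra edges, and finally constructs an EOP set $\widetilde F$ by a coordinated swap over all of $\mathcal S$. Your sketch gives no indication of this mechanism.
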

\begin{proof}
Let $T\notin \{P_1,P_2\}$ be a tree with $\eop(T)=\im(T)$. Suppose first that $T$ is a path. Then, $T\cong P_5$ by Lemma \ref{l:spider}, and hence $T\in {\mathcal{F}}$. Thus, we may assume that $\Delta(T)\geq3$. We root $T$ at an arbitrary vertex $r\in V(T)$ of degree at least 3, which is the only vertex of $T$ at depth 0. Based on the distance from $r$, all children of $r$ are at depth 1, and denote by $L_i$ all vertices at depth $i$. 
Now let $x\in V(T)$ be a vertex of degree at least 3 at largest possible depth. If $x=r$, then $T$ has exactly one vertex of degree $k\geq3$ and thus $T$ is isomorphic to $S(\ell_1,\ldots,\ell_k)$ for some $\ell_1,\ldots,\ell_k\in \mathbb{N}$. Then, $T$ is a spider by Lemma \ref{l:spider}, and thus it is in $\mathcal{F}$. Hence, let $x\neq r$. Let $y$ be the parent of $x$ and $e=xy$. Let $T_x$ be the component of $T-e$ that contains $x$ and let $T'$ be the other component of $T-e$. Denote deg$_{T_x}(x)=k$. Since deg$_T(x)\geq3$, it follows that $k\geq 2$. From the choice of $x$, it follows that $T_x$ is isomorphic to $S(n_1,\ldots,n_k)$ with center $x$ for some $n_1,\ldots,n_k\in \mathbb{N}$. For any $i\in[k]$, denote by $x_1^i,\ldots,x_{n_i}^i$ the vertices of $S(n_1,\ldots,n_k)$ that induce a path, where $x_1^i$ is a child of $x$. 

Let $F$ be a $\im(T)$-set. Note that $F$ is also a $\eop(T)$-set as $\eop(T)=\im(T)$. 

\begin{claim}\label{claim1}
$e\notin F$.
\end{claim}
\begin{claimproof}
The proof is by contradiction, thus let us assume that $e\in F$. Then, $F\cap E(T_x)$ does not contain any edge incident with a child of $x$. Thus, $|F\cap E(T_x)|=\sum_{i:n_i\geq 3} \im(P_{n_i-1})$. If there exists $i\in[k]$ with $n_i\in \{1,2\}$, we can construct an EOP set of $T$ of cardinality greater than $|F|=\im(T)$ by adding to $F$ the edge incident with $x$ in $P_{n_i}$, a contradiction. Thus, $n_i\geq3$ for all $i\in[k]$. We now set
\begin{center}
$F^*=\big{(}F\cap E(T-T_x)\big{)}\cup \{xx_{1}^{i}\mid 1\leq i\leq k \}\cup W,$
\end{center}
in which $W$ is a $\eop(T_x-\{x_1^1,\ldots,x_1^k,x_2^1,\ldots,x_2^k\})$-set. Note that $F^{*}$ is an EOP set of $T$. Hence,
\begin{equation}\label{EOP1} 
\eop(T)\geq|F^*|=|F\cap E(T')|+1+k+\sum_{i=1}^k \eop(P_{n_i-2})\geq|F\cap E(T')|+k+1+\sum_{i=1}^{k}\left\lceil\frac{n_i-3}{2}\right\rceil.
\end{equation}
On the other hand,
\begin{equation}\label{IND1} 
\im(T)=|F\cap E(T')|+1+|F\cap E(T_x)|=|F\cap E(T')|+1+\sum_{i=1}^k \im(P_{n_i-1})=|F\cap E(T')|+1+\sum_{i=1}^{k}\left\lfloor \frac{n_i}{3}\right\rfloor.
\end{equation} 
Note that $k+\sum_{i=1}^{k}\lceil\frac{n_i-3}{2}\rceil\geq \sum_{i=1}^{k}\lfloor\frac{n_i}{3}\rfloor$, and the equality holds only when $n_i=3$ for all $i\in[k]$. Now, taking into account the equality $\eop(T)=\im(T)$ together with (\ref{EOP1}) and (\ref{IND1}), we infer that $n_i=3$ for all $i\in[k]$. It is then clear that $|F|=\im(T)=|F\cap E(T')|+1+k$. We now observe that
$$\widetilde{F}=\big{(}F\cap E(T')\big{)}\cup \{x_1^ix_2^i,x_2^ix_3^i\mid i\in[k]\}$$ 
is an EOP set of $T$. Hence, $\eop(T)\geq|\widetilde{F}|=|F\cap E(T')|+2k>\im(T)$, a contradiction.   
\end{claimproof}

Next, we focus on the structure of the subtree $T_x$.
\begin{claim}
Either $T_{x}$ is a spider with $x$ as the center, or $T_x$ is isomorphic to $P_5$ with $x$ as a support vertex.
\end{claim}
\begin{claimproof}
First, we distinguish two cases depending on whether there are edges in $F\cap E(T')$ that are adjacent to $e$.

\textit{Case 1.} $F\cap E(T')$ contains no edge adjacent to $e$. In this case, $|F\cap T_x|=\im(T_x)=\eop(T_x)$. Since $x$ has degree at least 2 in $T_x$, it follows from Lemma \ref{l:spider} that $T_x$ is either a $P_5$ or a spider. Note that if $T_x$ is a spider with at least three legs, then $x$ is its center. Moreover, if $T_x$ is isomorphic to $P_5$, then $x$ can be any non-leaf vertex of $P_5$.

\textit{Case 2.} There is $f\in F\cap E(T')$ adjacent to $e$. Since $F$ is an induced matching, the edges incident with $x$ are not contained in $F$. Hence, $|F|=|F\cap E(T')|+\sum_{i=1}^k \im(P_{n_i})=|F\cap E(T')|+\sum_{i=1}^k \lfloor\frac{n_i+1}{3} \rfloor$. If there exists $i\in[k]$ such that $n_i=1$, then 
\begin{center}
$\big{(}F\setminus \big{\{}f,x_1^jx_2^j,x_2^jx_3^j\mid j\in[k]\setminus \{i\}\big{\}}\big{)}\cup \big{\{}e,xx_1^j\mid j\in[k]\big{\}}$
\end{center} 
is an EOP set of cardinality more than $|F|$ (note that $|F\cap \{x_1^jx_2^j,x_2^jx_3^j\mid j\in[k]\setminus \{i\}\}|\leq k-1$ as $F$ is an induced matching), a contradiction. Thus, $n_i\geq2$ for all $i\in[k]$. Now we set 
\begin{center}
$F^{*}=\big{(}(F\cap E(T'))\setminus \{f\}\big{)}\cup \big{\{}e,xx_{1}^{i}\mid i\in[k]\big{\}}\cup W$,
\end{center}
in which $W$ is a $\eop(T_{x}-\{x,x_1^i,x_2^i\mid i\in[k]\})$-set. Note that $F^{*}$ is an EOP set of $T$, and hence \begin{equation}\label{Chain}
\begin{array}{lcl}
|F|=\eop(T)\geq |F^*|&=&(|F\cap E(T')|-1)+1+k+\sum_{i=1}^k\eop(P_{n_i-2})\\
&\geq&|F\cap E(T')|+k+\sum_{i=1}^{k}\lceil\frac{n_i-3}{2}\rceil\\
&=&|F\cap E(T')|+\sum_{i=1}^{k}\lfloor\frac{n_i}{2}\rfloor.
\end{array}
\end{equation}
 
Since $\eop(T)=\im(T)$, we get $\sum_{i=1}^{k}\lfloor\frac{n_i}{2}\rfloor\leq \sum_{i=1}^{k}\lfloor\frac{n_i+1}{3}\rfloor$, implying that $n_i\in \{2,3,5\}$ for each $i\in[k]$. Note first that $n_i=5$ is impossible as in this case $\eop(P_{n_i-2})=\eop(P_3)=2>\lceil\frac{n_i-3}{2}\rceil$, which in turn leads to 
$|F^*|>|F\cap E(T')|+\sum_{i=1}^k \lfloor \frac{n_i}{2} \rfloor \geq|F\cap E(T')|+\sum_{i=1}^k \lfloor \frac{n_i+1}{3}\rfloor=|F|$, a contradiction. Hence, $n_i\in \{2,3\}$ for any $i\in[k]$. It is then clear that $\im(T)=\im(T')+k$. On the other hand, if $n_i=3$ for some $i\in[k]$, then
\begin{center}
$\big{(}(F\cap E(T')\big{)}\cup \{x_{n_{j}}^{j}x_{n_{j}-1}^{j}\mid j\in[k]\}\cup \{x_1^ix_2^i\}$
\end{center}
is an EOP of $T$ of cardinality greater than $\im(T)$, which is impossible. Hence, $n_i=2$ for each $i\in[k]$, which implies that $T_x$ is a spider with center $x$.
\end{claimproof}

Next, we will derive that the subtree $T'$ is also in $\cal F$ as soon as $|V(T')|>2$. 

\begin{claim}
$\im(T')=\eop(T')$.
\end{claim}
\begin{claimproof} 
Suppose that $\eop(T')>\im(T')$. Let $F'$ be a $\eop(T')$-set. First, let $x$ be the center of the spider $T_x$. Because $e\notin F$, it follows from Observation \ref{ob:ImEOPSpider} that $|F\cap E(T_x)|=k=\nu_{I}(T_{x})$. Moreover, $|F\cap E(T')|=\im(T')$. Hence, $|F|=k+\im(T')$. On the other hand, $F'$ together with all pendant edges of $T_x$ form an EOP set of $T$ of cardinality $|F'|+k>\im(T')+k=|F|$, a contradiction. Hence, $\im(T')=\eop(T')$. Assume now that $T_x\cong P_{5}:x_1x_2x_3x_4x_5$, in which $x=x_{2}$. Since $e\notin F$, $|F\cap E(T_x)|\leq2$ and $|F\cap E(T')|\leq \im(T')$, we deduce that $|F|\leq2+\im(T')$. On the other hand, $F'$ together with $\{x_3x_4,x_4x_5\}$ form an EOP set of $T$ of cardinality $|F'|+2>\im(T')+2\geq|F|$, a contradiction. Hence, $\im(T')=\eop(T')$.
\end{claimproof}

We continue by induction on $|V(T)|$ to prove that $T\in \mathcal{F}\cup \{P_{1},P_{2}\}$. Since $\im(T')=\eop(T')$, we have $T'\in \mathcal{F}\cup \{P_{1},P_{2}\}$ by the induction hypothesis. Note that $T'\neq P_{1}$ due to Lemma \ref{l:spider}. Suppose that $T'\cong P_{2}$. If $T_{x}$ is a spider with $x$ as the center, then $T$ is a spider and thus $T\in \cal F$. Otherwise, we have $T\cong S(3,1,2)$, and hence $\im(T)\neq \eop(T)$ by Lemma \ref{l:spider}, a contradiction. So, let $T'\notin\{ P_{1},P_{2}\}$. Thus, $T'=T'(k_1,\ldots,k_n)$ for some integers $k_{1}\ge 2,\ldots,k_{n}\geq2$. Let $v_1,\ldots,v_n$ be the centers of the corresponding spiders, respectively. Without loss of generality, we may assume that $e$ is an edge between $T_x$ and the spider $S^{k_1}$, and that the partition of $T'=T'(k_1,\ldots,k_n)$ into spiders $S^{k_1},\ldots,S^{k_n}$ is chosen in such a way that $k_1$ is as large as possible among all possible partitions of $T'$ into spiders that satisfy $(C_1)$ and $(C_2)$.

Recall by Claim \ref{claim1} that $e\notin F$. Moreover, since $T'\in \mathcal{F}$, it follows from Lemma \ref{l:im} that $\im(T')=k_1+\ldots+k_n$ and that $T'$ has a unique $\im(T')$-set. Furthermore, since $F\cap E(T')$ and $F\cap E(T_x)$ are induced matchings of $T'$ and $T_x$, respectively, we get $|F\cap E(T')|\leq \im(T')$ and $|F\cap E(T_x)|\leq \im(T_x)$. We now distinguish two cases.

\smallskip

\noindent \textit{Case 1.} $T_{x}\cong P_5$, where $x$ is a support vertex. 

\smallskip

Suppose first that $T$ does not satisfy $(C_{1})$, that is, $y\in V(S^{k_1})\setminus \{v_1\}$. Let $y'\in V(S^{k_1})\setminus \{v_1\}$ be a neighbor of $y$. If $yy'\in F$, then $|F\cap(E(T_{x})\cup \{e\})|=1$. Hence, $|F|\leq k_{1}+\ldots+k_{n}+1$. Otherwise, if $yy'\notin F$, we have $|F\cap E(T')|<\im(T')$ by Lemma \ref{l:im}. So, we again have $|F|\leq k_{1}+\ldots+k_{n}+1$ as $|F\cap(E(T_{x})\cup \{e\})|\leq2$. On the other hand, the $\nu_{I}(T')$-set together with $e$ and one pendant edge of $T_x$ (which is not adjacent to $e$) is an EOP set of $T$ of cardinality greater than $|F|$, a contradiction. Thus, $y=v_{1}$ and $(C_1)$ is satisfied in $T$.

Note that $T'$ fulfills $(C_{2})$ as $T'\in \cal F$. Moreover, it is obvious that both leaves of $T_{x}$ have degree $1$ in $T$. Therefore, $T$ fulfills $(C_{2})$ as well.

\smallskip

\noindent \textit{Case 2.} $x$ is the center of $T_x$. 

\smallskip

Since $T'\in \cal F$, every extra edge in $T'$ has an endvertex, which is the center of a spider. In addition, vertex $x$, as the center of $T_{x}$, is an endvertex of $e$. Thus, all extra edges of $T$ satisfy the desired property, and $T$ satisfies $(C_1)$. 

\begin{claim}
$\im(T)=\im(T_x)+\im(T')=k+k_1+\ldots+k_n$ and $T$ has a unique $\im(T)$-set.
\end{claim}
\begin{claimproof}
Since $e\notin F$, $|F|=|F\cap E(T_x)|+|F\cap E(T')|\leq \im(T_x)+\im(T')\leq k+k_1+\ldots+k_n$. Since $(C_1)$ is satisfied in $T$, the set $A$ of all $k$ pendant edges of $T_x$ and all $k_1+\ldots+k_n$ pendant edges of spiders $S^{k_1},\ldots,S^{k_n}$ in $T'$ forms an induced matching in $T$. 
Thus, $\im(T)\geq k+k_1+\ldots+k_n$, and consequently $\im(T)=|F|=k+k_1+\ldots+k_n$. Suppose that there exists a $\im(T)$-set $F^*$ containing an edge which is not in $A$. Then, it follows from Lemma \ref{l:im} that $|F^*\cap E(T_x)|<k$ or $|F^*\cap E(T')|<k_1+\ldots+k_n$. By Claim \ref{claim1}, $e\notin F^*$ and thus $|F^*|=|F^*\cap E(T_x)|+|F^*\cap E(T')|<k+k_1+\ldots+k_n$, which is a contradiction with $F^*$ being a $\im(T)$-set.
\end{claimproof}

To verify that $T$ satisfies $(C_2)$, first note that all leaves of $T_x$ have degree $1$ in $T$ as well. Since $T'\in \cal F$, for every spider $S^{k_i}$ with $i\in[k]$, there are at least two vertices $u,v\in L(S^{k_i})$ such that $\deg_{T'}(u)=\deg_{T'}(v)=1$. In addition, we have $\deg_T(u)=\deg_{T'}
(u)$ for all $u\in V(T')\setminus V(S^{k_1})$. With this in mind, it remains to prove that there exist at least two vertices $u,v\in L(S^{k_1})$ with $\deg_{T}(u)=\deg_{T}(v)=1$. In fact, this is our next claim, by which the proof of the theorem readily follows. 

\begin{claim}
\label{cl:zadnja}
There exist at least two vertices $u,v\in L(S^{k_1})$ with $\deg_{T}(u)=\deg_{T}(v)=1$.
\end{claim}
\begin{claimproof}
First, we remind the reader that the partition of $T'=T'(k_1,\ldots,k_n)$ into spiders $S^{k_1},\ldots,S^{k_n}$ was chosen in such a way that $k_1$ is as large as possible among all feasible partitions of $T'$ into spiders. 
If $y\in N_{S^{k_1}}[v_{1}]$, then at least two leaves of $S^{k_1}$ have degree $1$ in $T$ because $T'$ fulfills $(C_2)$ and we are done. Thus, let $y=w_1^1$, where $L(S^{k_1})=\{w_1^1,\ldots,w_{k_1}^1\}$. Suppose that the claim is false. This implies that there are precisely two vertices in $L(S^{k_1})$, namely $w_{1}^1$ and $w_{2}^1$, of degree $1$ in $T'$. Note that each $w_i^1$, where $i\neq2$, is incident with an extra edge in $T$ (in particular, $w_1^1$ is incident with $e$). Figure~\ref{fig} depicts an illustration of the tree $T$, where each edge label represents the set to which the edge belongs according to the notation as presented in the proof of this claim. 

We introduce some families of spiders from $\{S^{k_1},\ldots,S^{k_n}\}$, which are in a specific way related to the spider $S^{k_1}$. Let ${\cal S}_0=\{S^{k_1}\}$, and for $j\ge 1$, let
\begin{align*}
{\cal S}_j=\bigl\{S^{k_i}\mid & \textrm{ there exist } S^{k_t}\in{\cal S}_{j-1} \textrm{ and } u\in S(S^{k_i}) \textrm{ with } v_tu\in E(T') \textrm{ such that } w\in L(S^{k_i})\cap N(u)\\
 &\textrm{ has degree 1 in } T \bigr\}.
\end{align*}
It is possible that already ${\cal S}_1=\emptyset$ in which case only ${\cal S}_0$ is non-empty. In any case, since $T$ is finite, there exists the largest integer $m$ such that ${\cal S}_m\ne\emptyset$. Set ${\cal S}=\bigcup_{j=0}^m{{\cal S}_j}$. Clearly, $S^{k_1}\in {\cal S}$. For each $S^{k_i}\in {\cal S}$, we let $L(S^{k_i})=\{w_1^i,w_2^i,\ldots,w_{k_i}^i\}$. 

We claim that for every $S^{k_i}\in {\cal S}\setminus {\cal S}_0$, there exist exactly two vertices in $L(S^{k_i})$ with degree $1$ in $T$. As noted earlier, there exist at least two such vertices for each $S^{k_i}\in {\cal S}\setminus {\cal S}_0$ in $T$, as $T'$ enjoys $(C_2)$. Now, suppose that there exists $S^{k_i}\in {\cal S}_\ell$ having more than two leaves with degree $1$ in $T$, and let $\ell$ be as small as possible under this supposition. Clearly, as noted when considering $S^{k_1}$, we have $\ell\ge1$. We may assume, by the definition of family ${\cal S}_\ell$, that the neighbor $u_1^i$ of the leaf $w_1^i$ in $S^{k_i}\in {\cal S}_\ell$ with deg$_T(w_1^i)=1$ has the neighbor $v_t$ in some $S^{v_t}\in {\cal S}_{\ell-1}$. Now, we can reorganize the partition of spiders in $T'$ by moving $u_1^i$ and $w_1^i$ from the spider $S^{k_i}$ (with the center $v_i$) to the spider $S^{k_t}$ (with the center $v_t$). Note that in the resulting partition, the spider $S^{k_i-1}$ with $v_i$ as its center still has at least two leaves with degree $1$ in $T$, while the spider $S^{k_t+1}$ with $v_t$ as its center, which is in ${\cal S}_{\ell-1}$, has three leaves with degree $1$ in $T$. Since the other spiders in the partition of $T'$ into spiders have not changed, we infer that the new partition of $T'$ into spiders satisfies $(C_1)$ and $(C_2)$. Using the same argument, repeated $\ell-1$ times, we derive that the partition of $T'$ into spiders can be reorganized in such a way that for every $j\in[\ell]$, a leaf of $S^{k_i}$ with degree $1$ in $T$ and its neighbor $u$ are moved from a spider $S^{k_i}$ in ${\cal S}_j$ to the spider $S^{k_t}$ in ${\cal S}_{j-1}$, which correspond to each other via the edge $uv_t$ in the definition of family ${\cal S}_j$. Finally, this yields that there exists a partition of $T'$ into spiders such that the spider with $v_1$ as its center has $k_1+1$ leaves, and this is a contradiction with the assumption that $k_1$ is as large as possible among all partitions of $T'$ into spiders that satisfy $(C_1)$ and $(C_2)$.  

Based on the claim from the previous paragraph, we may assume without loss of generality that in each $S^{k_i}\in {\cal S}\setminus {\cal S}_0$, the leaves $w_1^i$ and $w_2^i$ have degree $1$ in $T$, while the remaining leaves $w_3^i,\ldots,w_{k_i}^i$ of $S^{k_i}$ are incident with an extra edge of $T$. In addition, we may assume that the neighbor $u_1^i$ of $w_1^i$ is adjacent to $v_t$ such that $S^{k_t}\in {\cal S}_{j-1}$, where $S^{k_i}\in {\cal S}_j$. For notational purposes, let $u_2^i$ be the neighbor of $w_2^i$. 

Let $F_1$ be the set of edges incident with $x$ and let $F_1'$ contain all edges in $T$ that are adjacent to the edges in $F_1$. (Note that $F_1'$ contains the pendant edges of $T_x$ and also the edge between $y$ ($=w_{1}^{1}$) and its support vertex in $S^{k_1}$.) Clearly, $|F_1|=|F_1'|$. 

As mentioned earlier, $w_{2}^{1}$ is the only leaf of $S^{k_1}$ that has degree $1$ in $T$, $w_{1}^{1}$ is incident with the extra edge $e$, while if $k_1>2$, the leaves $w_{3}^{1},\ldots,w_{k_1}^{1}$ are all incident with at least one extra edge. 
For all $j\in\{3,\ldots,k_1\}$, let $v_{s_j}$ be an arbitrarily chosen neighbor of $w_j^1$. Notice that $w_j^1v_{s_j}$ is an extra edge in $T'$, and that every $v_{s_j}$ is the center of a spider since $T'\in\cal F$. For each $j\in \{3,\ldots,k_1\}$, let $F_{k_1,j}$ be the set of edges incident with $v_{s_j}$ whose other endvertices are not the centers of the spiders, and let $F_{k_1,j}'$ be the set of edges in $F$ that are adjacent to the edges in $F_{k_1,j}$. In particular, $F_{k_1,j}'$ contains the edge between $w_{j}^{1}$ and its support vertex in $S^{k_1}$, and also, all the pendant edges of $S^{k_{s_j}}$ for each $j\in\{3,\ldots,k_1\}$. In addition, it is possible that $F_{k_1,j}$ contains extra edges of $T'$ adjacent to some edges in $F$ from the other spiders. Clearly, $|F_{k_1,j}|=|F_{k_1,j}'|$ for all $j\in \{3,\ldots,k_1\}$. 

In a similar way as in the previous paragraph, we deal with the leaves $w_3^i,\ldots,w_{k_i}^i$ of $S^{k_i}$ for every $S^{k_i}\in {\cal S}\setminus {\cal S}_0$. For all $j\in\{3,\ldots,k_i\}$, let $v_{s_{i,j}}$ be an arbitrarily chosen neighbor of $w_j^i$. Notice that $w_{j}^{i}v_{s_{i,j}}$ is an extra edge in $T'$, and that every $v_{s_{i,j}}$ is the center of a spider since $T'\in\cal F$. For each $j\in \{3,\ldots,k_i\}$, let $F_{k_i,j}$ be the set of edges incident with $v_{s_{i,j}}$ whose other endvertices are not the centers of the spiders, and let $F_{k_i,j}'$ be the set of edges in $F$ that are adjacent to the edges in $F_{k_i,j}$. Clearly, $|F_{k_i,j}|=|F_{k_i,j}'|$ for all $i$, where $S^{k_i}\in {\cal S}\setminus {\cal S}_0$, and for all $j\in \{3,\ldots,k_i\}$. Now, let 
\begin{center}
$F_0=\bigcup_{S^{k_i}\in {\cal S}}\bigcup_{j=3}^{k_i}{F_{k_i,j}}\text{ \   and  \  } F_0'=\bigcup_{S^{k_i}\in {\cal S}}\bigcup_{j=3}^{k_i}{F'_{k_i,j}},$
\end{center}
and note that $|F_0|=|F_0'|$.

\begin{figure}[ht]
\centering
\begin{tikzpicture}[scale=0.5, transform shape]
\node [draw, shape=circle] (v_{t}) at (3,0) {};
\node [draw, shape=circle] (u_{1}^{t}) at (1,-2) {};
\node [draw, shape=circle] (u_{2}^{t}) at (3,-2) {};
\node [draw, shape=circle] (u_{3}^{t}) at (5,-2) {};
\node [draw, shape=circle] (w_{1}^{t}) at (1,-4) {};
\node [draw, shape=circle] (w_{2}^{t}) at (3,-4) {};
\node [draw, shape=circle] (w_{3}^{t}) at (5,-4) {};

\draw (w_{1}^{t})--(u_{1}^{t})--(v_{t})--(u_{2}^{t})--(w_{2}^{t});
\draw (v_{t})--(u_{3}^{t})--(w_{3}^{t});

\node [scale=1.8] at (3.2,0.45) {\large $v_{t}$};
\node [scale=1.8] at (1.6,-2) {\large $u_{1}^{t}$};
\node [scale=1.8] at (1.65,-4) {\large $w_{1}^{t}$};
\node [scale=1.8] at (3.65,-4) {\large $w_{2}^{t}$};
\node [scale=1.8] at (5.65,-4) {\large $w_{3}^{t}$};
\node [scale=1.5] at (5.36,-3) {\large $F_{0}'$};
\node [scale=1.8] at (0.1,0.45) {\large ${\mathcal{S}}_1$};

\node [scale=1.8] at (3.5,0.8) {\large $\ldots\ldots\ldots\ldots\ldots\ldots$};
\node [scale=1.8] at (6.4,0.5) {\large $\vdots$};
\node [scale=1.8] at (6.4,-0.24) {\large $\vdots$};
\node [scale=1.8] at (6.4,-0.98) {\large $\vdots$};
\node [scale=1.8] at (6.4,-1.72) {\large $\vdots$};
\node [scale=1.8] at (6.4,-2.46) {\large $\vdots$};
\node [scale=1.8] at (6.4,-3.2) {\large $\vdots$};
\node [scale=1.8] at (6.4,-3.94) {\large $\vdots$};
\node [scale=1.8] at (3.5,-4.6) {\large $\ldots\ldots\ldots\ldots\ldots\ldots$};
\node [scale=1.8] at (0.6,0.5) {\large $\vdots$};
\node [scale=1.8] at (0.6,-0.24) {\large $\vdots$};
\node [scale=1.8] at (0.6,-0.98) {\large $\vdots$};
\node [scale=1.8] at (0.6,-1.72) {\large $\vdots$};
\node [scale=1.8] at (0.6,-2.46) {\large $\vdots$};
\node [scale=1.8] at (0.6,-3.2) {\large $\vdots$};
\node [scale=1.8] at (0.6,-3.94) {\large $\vdots$};


\node [draw, shape=circle] (a0) at (6,-7) {};
\node [draw, shape=circle] (a1) at (4,-9) {};
\node [draw, shape=circle] (a2) at (6,-9) {};
\node [draw, shape=circle] (a3) at (8,-9) {};
\node [draw, shape=circle] (a11) at (4,-11) {};
\node [draw, shape=circle] (a22) at (6,-11) {};
\node [draw, shape=circle] (a33) at (8,-11) {};

\draw (a11)--(a1)--(a0)--(a2)--(a22);
\draw (a0)--(a3)--(a33);
\draw (a0)--(w_{3}^{t});

\node [scale=1.5] at (4.3,-8.1) {\large $F_{0}$};
\node [scale=1.5] at (6.35,-8.1) {\large $F_{0}$};
\node [scale=1.5] at (7.65,-8.1) {\large $F_{0}$};
\node [scale=1.5] at (6,-5.8) {\large $F_{0}$};

\node [scale=1.5] at (4.35,-10) {\large $F_{0}'$};
\node [scale=1.5] at (6.35,-10) {\large $F_{0}'$};
\node [scale=1.5] at (8.35,-10) {\large $F_{0}'$};


\node [draw, shape=circle] (v_{i}) at (14,-3) {};
\node [draw, shape=circle] (u_{1}^{i}) at (11,-5) {};
\node [draw, shape=circle] (u_{2}^{i}) at (13,-5) {};
\node [draw, shape=circle] (u_{3}^{i}) at (15,-5) {};
\node [draw, shape=circle] (u_{4}^{i}) at (17,-5) {};
\node [draw, shape=circle] (w_{1}^{i}) at (11,-7) {};
\node [draw, shape=circle] (w_{2}^{i}) at (13,-7) {};
\node [draw, shape=circle] (w_{3}^{i}) at (15,-7) {};
\node [draw, shape=circle] (w_{4}^{i}) at (17,-7) {};

\draw (w_{1}^{i})--(u_{1}^{i})--(v_{i})--(u_{2}^{i})--(w_{2}^{i});
\draw (w_{3}^{i})--(u_{3}^{i})--(v_{i})--(u_{4}^{i})--(w_{4}^{i});
\draw (v_{t})--(u_{1}^{i});

\node [scale=1.8] at (14,-2.55) {\large $v_{i}$};
\node [scale=1.8] at (11.65,-5) {\large $u_{1}^{i}$};
\node [scale=1.5] at (15.34,-6) {\large $F_{0}'$};
\node [scale=1.5] at (17.34,-6) {\large $F_{0}'$};
\node [scale=1.8] at (11.65,-7) {\large $w_{1}^{i}$};
\node [scale=1.8] at (13.65,-7) {\large $w_{2}^{i}$};
\node [scale=1.8] at (15.65,-7) {\large $w_{3}^{i}$};
\node [scale=1.8] at (17.65,-7) {\large $w_{4}^{i}$};
\node [scale=1.8] at (10,-2.5) {\large ${\mathcal{S}}_2$};

\node [scale=1.8] at (14.45,-2.1) {\large $\ldots\ldots\ldots\ldots\ldots\ldots\ldots\ldots$};
\node [scale=1.8] at (18.35,-2.4) {\large $\vdots$};
\node [scale=1.8] at (18.35,-3.14) {\large $\vdots$};
\node [scale=1.8] at (18.35,-3.88) {\large $\vdots$};
\node [scale=1.8] at (18.35,-4.62) {\large $\vdots$};
\node [scale=1.8] at (18.35,-5.36) {\large $\vdots$};
\node [scale=1.8] at (18.35,-6.1) {\large $\vdots$};
\node [scale=1.8] at (18.35,-6.84) {\large $\vdots$};
\node [scale=1.8] at (18.35,-7.49) {\large $.$};
\node [scale=1.8] at (14.45,-7.73) {\large $\ldots\ldots\ldots\ldots\ldots\ldots\ldots\ldots$};
\node [scale=1.8] at (10.6,-2.4) {\large $\vdots$};
\node [scale=1.8] at (10.6,-3.14) {\large $\vdots$};
\node [scale=1.8] at (10.6,-3.88) {\large $\vdots$};
\node [scale=1.8] at (10.6,-4.62) {\large $\vdots$};
\node [scale=1.8] at (10.6,-5.36) {\large $\vdots$};
\node [scale=1.8] at (10.6,-6.1) {\large $\vdots$};
\node [scale=1.8] at (10.6,-6.84) {\large $\vdots$};
\node [scale=1.8] at (10.6,-7.49) {\large $.$};


\node [draw, shape=circle] (b0) at (13,-11) {};
\node [draw, shape=circle] (b1) at (12,-13) {};
\node [draw, shape=circle] (b2) at (14,-13) {};
\node [draw, shape=circle] (b11) at (12,-15) {};
\node [draw, shape=circle] (b22) at (14,-15) {};

\draw (b11)--(b1)--(b0)--(b2)--(b22);
\draw (w_{3}^{i})--(b0);

\node [scale=1.5] at (13.45,-9.1) {\large $F_{0}$};
\node [scale=1.5] at (12.03,-12) {\large $F_{0}$};
\node [scale=1.5] at (13.95,-12) {\large $F_{0}$};
\node [scale=1.5] at (12.34,-14) {\large $F_{0}'$};
\node [scale=1.5] at (14.34,-14) {\large $F_{0}'$};


\node [draw, shape=circle] (c0) at (19,-11) {};
\node [draw, shape=circle] (c1) at (17,-13) {};
\node [draw, shape=circle] (c2) at (19,-13) {};
\node [draw, shape=circle] (c3) at (21,-13) {};
\node [draw, shape=circle] (c11) at (17,-15) {};
\node [draw, shape=circle] (c22) at (19,-15) {};
\node [draw, shape=circle] (c33) at (21,-15) {};

\draw (c11)--(c1)--(c0)--(c2)--(c22);
\draw (c0)--(c3)--(c33);
\draw (c0)--(w_{4}^{i});

\node [scale=1.5] at (18.5,-9.13) {\large $F_{0}$};
\node [scale=1.5] at (17.38,-12) {\large $F_{0}$};
\node [scale=1.5] at (19.35,-12) {\large $F_{0}$};
\node [scale=1.5] at (20.6,-12) {\large $F_{0}$};
\node [scale=1.5] at (17.34,-14) {\large $F_{0}'$};
\node [scale=1.5] at (19.34,-14) {\large $F_{0}'$};
\node [scale=1.5] at (21.34,-14) {\large $F_{0}'$};


\node [draw, shape=circle] (d0) at (10,4) {};
\node [draw, shape=circle] (d1) at (8,2) {};
\node [draw, shape=circle] (d2) at (10,2) {};
\node [draw, shape=circle] (d3) at (12,2) {};
\node [draw, shape=circle] (d11) at (8,0) {};
\node [draw, shape=circle] (d22) at (10,0) {};
\node [draw, shape=circle] (d33) at (12,0) {};

\draw (d11)--(d1)--(d0)--(d2)--(d22);
\draw (d0)--(d3)--(d33);
\draw (v_{t})--(d11);

\node [scale=1.5] at (8.5,3) {\large $L$};
\node [scale=1.5] at (10.27,3) {\large $L$};
\node [scale=1.5] at (11.5,3) {\large $L$};
\node [scale=1.5] at (8.33,1) {\large $L'$};
\node [scale=1.5] at (10.33,1) {\large $L'$};
\node [scale=1.5] at (12.33,1) {\large $L'$};


\node [draw, shape=circle] (e0) at (18,8) {};
\node [draw, shape=circle] (e1) at (16,6) {};
\node [draw, shape=circle] (e2) at (18,6) {};
\node [draw, shape=circle] (e3) at (20,6) {};
\node [draw, shape=circle] (e11) at (16,4) {};
\node [draw, shape=circle] (e22) at (18,4) {};
\node [draw, shape=circle] (e33) at (20,4) {};

\draw (e11)--(e1)--(e0)--(e2)--(e22);
\draw (e0)--(e3)--(e33);
\draw (d0)--(e1);

\node [scale=1.5] at (16.34,5) {\large $L'$};
\node [scale=1.5] at (12,5.1) {\large $L$};


\node [draw, shape=circle] (v_{1}) at (-4,-5) {};
\node [draw, shape=circle] (u_{1}^{1}) at (-7,-7) {};
\node [draw, shape=circle] (u_{2}^{1}) at (-5,-7) {};
\node [draw, shape=circle] (u_{3}^{1}) at (-3,-7) {};
\node [draw, shape=circle] (u_{4}^{1}) at (-1,-7) {};
\node [draw, shape=circle] (w_{1}^{1}) at (-7,-9) {};
\node [draw, shape=circle] (w_{2}^{1}) at (-5,-9) {};
\node [draw, shape=circle] (w_{3}^{1}) at (-3,-9) {};
\node [draw, shape=circle] (w_{4}^{1}) at (-1,-9) {};

\draw (w_{1}^{1})--(u_{1}^{1})--(v_{1})--(u_{2}^{1})--(w_{2}^{1});
\draw (w_{3}^{1})--(u_{3}^{1})--(v_{1})--(u_{4}^{1})--(w_{4}^{1});
\draw (v_{1})--(u_{1}^{t});

\node [scale=1.8] at (-4.35,-4.55) {\large $v_{1}$};
\node [scale=1.8] at (-6.35,-7.1) {\large $u_{1}^{1}$};
\node [scale=1.8] at (-6.35,-9) {\large $w_{1}^{1}$};
\node [scale=1.8] at (-4.35,-9) {\large $w_{2}^{1}$};
\node [scale=1.8] at (-2.35,-9) {\large $w_{3}^{1}$};
\node [scale=1.8] at (-0.35,-9) {\large $w_{4}^{1}$};
\node [scale=1.5] at (-7.38,-8) {\large $F_{1}'$};
\node [scale=1.8] at (-8.3,-4.5) {\large ${\mathcal{S}}_0$};
\node [scale=1.5] at (-2.65,-8) {\large $F_{0}'$};
\node [scale=1.5] at (-0.65,-8) {\large $F_{0}'$};

\node [scale=1.8] at (-3.75,-4.1) {\large $\ldots\ldots\ldots\ldots\ldots\ldots\ldots\ldots$};
\node [scale=1.8] at (0.15,-4.4) {\large $\vdots$};
\node [scale=1.8] at (0.15,-5.14) {\large $\vdots$};
\node [scale=1.8] at (0.15,-5.88) {\large $\vdots$};
\node [scale=1.8] at (0.15,-6.62) {\large $\vdots$};
\node [scale=1.8] at (0.15,-7.36) {\large $\vdots$};
\node [scale=1.8] at (0.15,-8.1) {\large $\vdots$};
\node [scale=1.8] at (0.15,-8.84) {\large $\vdots$};
\node [scale=1.8] at (0.15,-9.49) {\large $.$};
\node [scale=1.8] at (-3.75,-9.73) {\large $\ldots\ldots\ldots\ldots\ldots\ldots\ldots\ldots$};
\node [scale=1.8] at (-7.74,-4.4) {\large $\vdots$};
\node [scale=1.8] at (-7.74,-5.14) {\large $\vdots$};
\node [scale=1.8] at (-7.74,-5.88) {\large $\vdots$};
\node [scale=1.8] at (-7.74,-6.62) {\large $\vdots$};
\node [scale=1.8] at (-7.74,-7.36) {\large $\vdots$};
\node [scale=1.8] at (-7.74,-8.1) {\large $\vdots$};
\node [scale=1.8] at (-7.74,-8.84) {\large $\vdots$};
\node [scale=1.8] at (-7.74,-9.49) {\large $.$};


\node [draw, shape=circle] (x) at (-9,-11) {};
\node [draw, shape=circle] (f1) at (-10,-13) {};
\node [draw, shape=circle] (f2) at (-8,-13) {};
\node [draw, shape=circle] (f11) at (-10,-15) {};
\node [draw, shape=circle] (f22) at (-8,-15) {};

\draw (f11)--(f1)--(x)--(f2)--(f22);
\draw (x)--(w_{1}^{1});

\node [scale=1.8] at (-9.4,-10.7) {\large $x$};
\node [scale=1.5] at (-8.55,-9.9) {\large $F_{1}$};
\node [scale=1.5] at (-9.97,-12) {\large $F_{1}$};
\node [scale=1.5] at (-8.05,-12) {\large $F_{1}$};
\node [scale=1.5] at (-9.63,-14) {\large $F_{1}'$};
\node [scale=1.5] at (-7.63,-14) {\large $F_{1}'$};


\node [draw, shape=circle] (v_{s_3}) at (-4,-11) {};
\node [draw, shape=circle] (g1) at (-6,-13) {};
\node [draw, shape=circle] (g2) at (-4,-13) {};
\node [draw, shape=circle] (g3) at (-2,-13) {};
\node [draw, shape=circle] (g11) at (-6,-15) {};
\node [draw, shape=circle] (g22) at (-4,-15) {};
\node [draw, shape=circle] (g33) at (-2,-15) {};

\draw (g11)--(g1)--(v_{s_3})--(g2)--(g22);
\draw (v_{s_3})--(g3)--(g33);
\draw (v_{s_3})--(w_{3}^{1});

\node [scale=1.5] at (-3.3,-10.45) {\large $F_{0}$};
\node [scale=1.8] at (-4.65,-10.9) {\large $v_{s_3}$};
\node [scale=1.5] at (-5.65,-12) {\large $F_{0}$};
\node [scale=1.5] at (-3.63,-12) {\large $F_{0}$};
\node [scale=1.5] at (-2.48,-12) {\large $F_{0}$};
\node [scale=1.5] at (-5.66,-14) {\large $F_{0}'$};
\node [scale=1.5] at (-3.66,-14) {\large $F_{0}'$};
\node [scale=1.5] at (-1.66,-14) {\large $F_{0}'$};


\node [draw, shape=circle] (v_{s_4}) at (2,-14) {};
\node [draw, shape=circle] (h1) at (0,-16) {};
\node [draw, shape=circle] (h2) at (2,-16) {};
\node [draw, shape=circle] (h3) at (4,-16) {};
\node [draw, shape=circle] (h11) at (0,-18) {};
\node [draw, shape=circle] (h22) at (2,-18) {};
\node [draw, shape=circle] (h33) at (4,-18) {};

\draw (w_{4}^{1})--(v_{s_4});
\draw (h11)--(h1)--(v_{s_4})--(h2)--(h22);
\draw (v_{s_4})--(h3)--(h33);

\node [scale=1.8] at (1.3,-13.7) {\large $v_{s_4}$};
\node [scale=1.5] at (0.4,-15) {\large $F_{0}$};
\node [scale=1.5] at (2.34,-15) {\large $F_{0}$};
\node [scale=1.5] at (3.53,-15) {\large $F_{0}$};
\node [scale=1.5] at (1.25,-12) {\large $F_{0}$};
\node [scale=1.5] at (0.33,-17) {\large $F_{0}'$};
\node [scale=1.5] at (2.33,-17) {\large $F_{0}'$};
\node [scale=1.5] at (4.33,-17) {\large $F_{0}'$};


\node [draw, shape=circle] (i0) at (8,-13) {};
\node [draw, shape=circle] (i1) at (6,-15) {};
\node [draw, shape=circle] (i2) at (8,-15) {};
\node [draw, shape=circle] (i3) at (10,-15) {};
\node [draw, shape=circle] (i11) at (6,-17) {};
\node [draw, shape=circle] (i22) at (8,-17) {};
\node [draw, shape=circle] (i33) at (10,-17) {};

\draw (i11)--(i1)--(i0)--(i2)--(i22);
\draw (i0)--(i3)--(i33);
\draw (v_{s_4})--(i1);

\node [scale=1.5] at (6.33,-16) {\large $F_{0}'$};
\node [scale=1.5] at (4.25,-14.23) {\large $F_{0}$};


\node [draw, shape=circle] (v_{j}) at (-5,8) {};
\node [draw, shape=circle] (u_{1}^{j}) at (-7,6) {};
\node [draw, shape=circle] (u_{2}^{j}) at (-5,6) {};
\node [draw, shape=circle] (u_{3}^{j}) at (-3,6) {};
\node [draw, shape=circle] (w_{1}^{j}) at (-7,4) {};
\node [draw, shape=circle] (w_{2}^{j}) at (-5,4) {};
\node [draw, shape=circle] (w_{3}^{j}) at (-3,4) {};

\draw (w_{1}^{j})--(u_{1}^{j})--(v_{j})--(u_{2}^{j})--(w_{2}^{j});
\draw (v_{j})--(u_{3}^{j})--(w_{3}^{j});
\draw (v_{t})--(u_{3}^{j});

\node [scale=1.8] at (-4.5,8.2) {\large $v_{j}$};
\node [scale=1.5] at (-3.44,5) {\large $M'$};
\node [scale=1.8] at (-6.35,4) {\large $w_{1}^{j}$};
\node [scale=1.8] at (-4.35,4) {\large $w_{2}^{j}$};
\node [scale=1.8] at (-2.35,4) {\large $w_{3}^{j}$};


\node [draw, shape=circle] (j0) at (-8,1) {};
\node [draw, shape=circle] (j1) at (-10,-1) {};
\node [draw, shape=circle] (j2) at (-8,-1) {};
\node [draw, shape=circle] (j3) at (-6,-1) {};
\node [draw, shape=circle] (j11) at (-10,-3) {};
\node [draw, shape=circle] (j22) at (-8,-3) {};
\node [draw, shape=circle] (j33) at (-6,-3) {};

\draw (j11)--(j1)--(j0)--(j2)--(j22);
\draw (j0)--(j3)--(j33);
\draw (j0)--(w_{3}^{j});

\node [scale=1.5] at (-6.6,2.3) {\large $M$};
\node [scale=1.5] at (-9.75,-0.2) {\large $M$};
\node [scale=1.5] at (-7.64,-0.2) {\large $M$};
\node [scale=1.5] at (-6.22,-0.2) {\large $M$};
\node [scale=1.5] at (-9.57,-2) {\large $M'$};
\node [scale=1.5] at (-7.57,-2) {\large $M'$};
\node [scale=1.5] at (-5.57,-2) {\large $M'$};

\end{tikzpicture}
\caption{{\small An illustration of the situation discussed in Claim \ref{cl:zadnja} of the proof of Theorem \ref{thm:trees2}.}}\label{fig}
\end{figure}
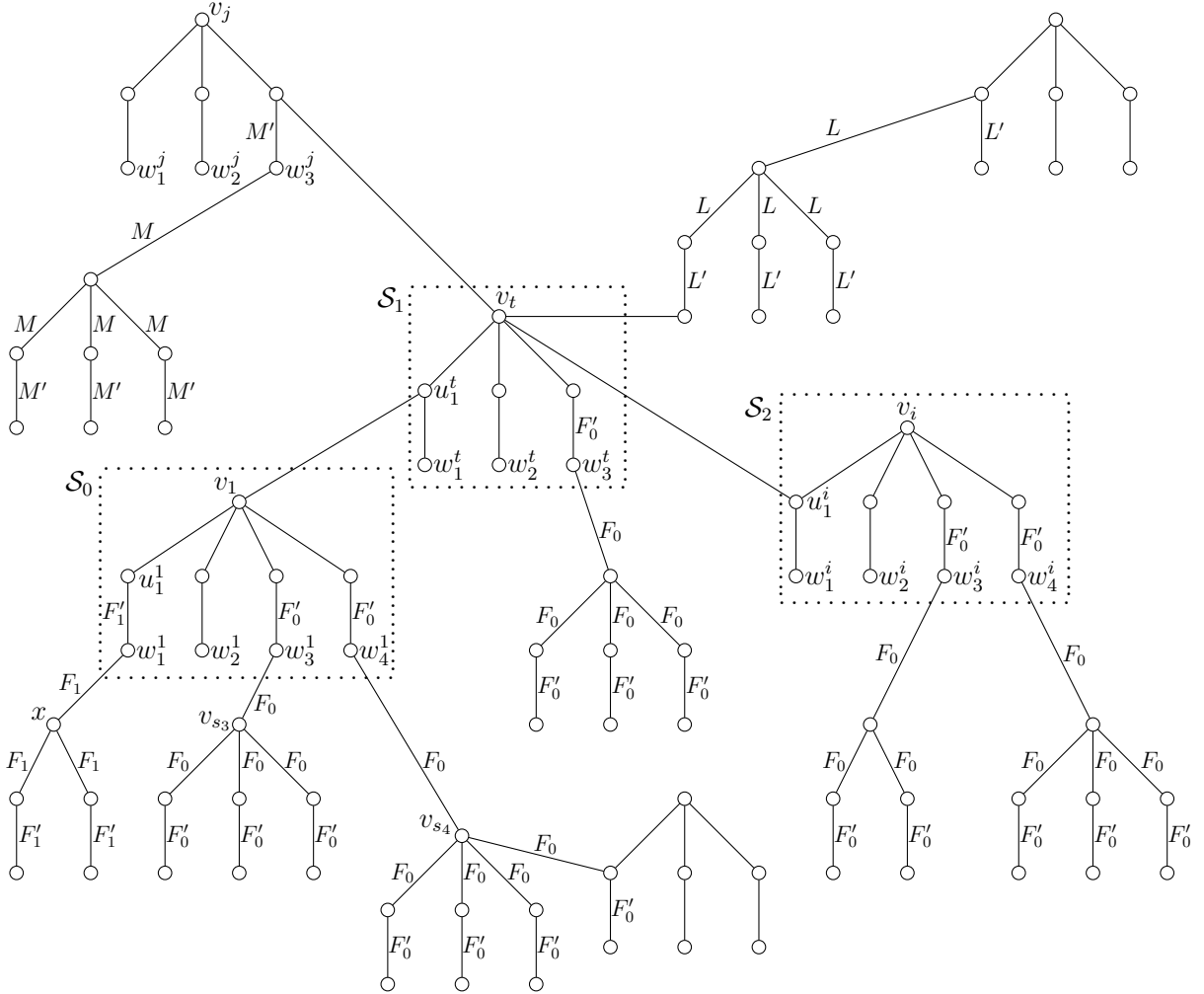

Next, for every $S^{k_i}\in {\cal S}$ (including $S^{k_1}$), let $Z_i$ be the set of neighbors $z$ of $v_i$, which are leaves in the corresponding spider $S^{k_{r_z}}$ to which they belong with respect to the partition of $T'$ into spiders. (Note that every such edge $v_{i}z$ is an extra edge of $T$.) Now, let 
\begin{center}
$L_i=\bigl\{v_{r_z}u\mid z\in Z_i\ \mbox{and}\ u\notin\{v_1,\ldots,v_n\}\bigr\}$
\end{center}
be the set of edges in $T$ incident with $v_{r_z}$, whose other endvertices are not the centers of the spiders in $T'$, and let $L_i'$ be the set of edges in $F$ that are adjacent to an edge in $L_{i}$.
(Note that $L_i$ contains all internal edges of $S^{k_{r_z}}$, but it may also contain extra edges of $T$ if $v_{r_z}$ is adjacent to non-centers of other spiders.) Clearly, $|L_i|=|L_i'|$ for all $i$. In addition, let $L$ be the union of all $L_i$, where $S^{k_i}\in {\cal S}$. Similarly, let $L'$ be the union of all such $L_i'$.

For every $S^{k_i}\in {\cal S}$, let $Q_i$ be the set of vertices $q$ in $L(S^{k_j})$, where $j\ne i$, such that the neighbor $u\in V(S^{k_j})$ of $q$ is adjacent to $v_i$, and $\textrm{deg}_T(q)>1$. That is, every $q\in Q_i$ is incident with an extra edge. Clearly, $S^{k_j}\notin \cal S$ as $\textrm{deg}_T(q)>1$. Now, let $v_{s_q}$ be an arbitrarily chosen neighbor of $q$, different from $u$. Note that $qv_{s_q}$ is an extra edge in $T$, and that every $v_{s_q}$ is the center of a spider since $T'\in\cal F$. Let $M_i$ be the set of edges incident with $v_{s_q}$, over all $q\in Q_i$, such that the other endvertices of these edges are not the centers of the spiders. Let $M_i'$ be the set of edges in $F$ that are adjacent to the edges in $M_i$. Clearly, $|M_i|=|M_i'|$ for all $i$, where $S^{k_i}\in {\cal S}$. Let $M$ be the union of all these sets $M_i$, and similarly $M'$, is defined as the union of all $M_i'$. We now set
$$\widetilde{F}=\Big(F\setminus \big(\hskip -12pt\bigcup_{S^{k_i}\in {\cal S}\setminus\{S^{k_1}\}}\hskip-18pt\{u_1^iw_1^i\}\cup F_1'\cup F_0' \cup L'\cup M'\big)\Big) \bigcup \big(\hskip -12pt\bigcup_{S^{k_i}\in {\cal S}\setminus\{S^{k_1}\}}\hskip-18pt\{v_iu_2^i\}\cup F_1 \cup F_0\cup L\cup M\cup \{v_1u_2^1\} \big{)}.$$
One can readily check that $\widetilde{F}$ is an EOP set in $T$ with $|\widetilde{F}|=|F|+1$, which is impossible. 
\end{claimproof}

As observed earlier, Claim \ref{cl:zadnja} implies that $T$ satisfies $(C_2)$ also in Case 2. We derive that $T\in \cal F$. This completes the proof. 
\end{proof}

Combining Theorems~\ref{thm:trees1} and~\ref{thm:trees2} we derive the desired characterization, which is the main result of this section.

\begin{corollary}
If $T$ is a tree, then $\eop(T)=\im(T)$ if and only if $T\in {\mathcal{F}}\cup \{P_1,P_2\}$. 
\end{corollary}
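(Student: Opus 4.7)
The plan is essentially trivial at this point: the corollary is a direct combination of the two main theorems of the section, so the proof proposal is just to stitch them together.

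For the forward direction, I would invoke Theorem~\ref{thm:trees2}: if $T$ is a tree satisfying $\eop(T)=\im(T)$, then $T\in {\mathcal{F}}\cup \{P_1,P_2\}$. For the converse direction, I would invoke Theorem~\ref{thm:trees1}: if $T\in {\mathcal{F}}\cup \{P_1,P_2\}$, then $\im(T)=\eop(T)$. Combining these two implications yields the biconditional statement.

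There is no real obstacle here, since the substance of the argument has already been fully carried out in the preceding two theorems. The only point worth noting is that the cases $P_1$ and $P_2$ are handled trivially (for $P_1$ both invariants are $0$, and for $P_2$ both invariants are $1$), and these small cases are already absorbed into the statement of Theorem~\ref{thm:trees1}. Thus the proof of the corollary is a one-line appeal to the two theorems, with no additional case analysis required.
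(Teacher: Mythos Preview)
Your proposal is correct and matches the paper's approach exactly: the paper simply states that the corollary follows by combining Theorems~\ref{thm:trees1} and~\ref{thm:trees2}, with no additional argument given.
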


Based on the nontrivial proof of the above result, a complete characterization of the graphs $G$ for which $\eop(G)=\im(G)$ holds should be difficult. Possibly, obtaining such a characterization within a special class of graphs could already be an interesting problem. 


\section{Lexicographic product}\label{sec-lex}

The lexicographic product provides an environment for induced matching and edge open packing in which their behavior can be described more precisely. In particular, it enables us to give the exact value of induced matching number in this case. 

We start the section by introducing some notations. The edge projection of $G \circ H$ to $G$ is the  mapping ${\rm pr}_G: E(G \circ H) \to E(G) \cup V(G)$ with  ${\rm pr}_{G}((g,h)(g',h'))=gg'$ if $g \neq g'$ and ${\rm pr}_{G}((g,h)(g,h'))=g$.

\begin{theorem}\label{lex}
For any graphs $G$ and $H$, $\nu_{I}(G\circ H)=\alpha(G)\nu_{I}(H)$.
\end{theorem}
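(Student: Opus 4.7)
The plan is to establish both inequalities $\nu_{I}(G\circ H)\geq \alpha(G)\nu_{I}(H)$ and $\nu_{I}(G\circ H)\leq \alpha(G)\nu_{I}(H)$. For the lower bound, I would fix an $\alpha(G)$-set $I$ and a $\nu_{I}(H)$-set $M_{H}$; for every $g\in I$, let $M_{g}\subseteq E(^{g}\!H)$ be the natural copy of $M_{H}$ inside the fiber $^{g}\!H$. Each $M_{g}$ is an induced matching of $^{g}\!H\cong H$, and since $I$ is independent in $G$, no edge of $G\circ H$ joins two distinct fibers $^{g}\!H$ and $^{g'}\!H$ with $g,g'\in I$. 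Consequently, $M=\bigcup_{g\in I}M_{g}$ is an induced matching of $G\circ H$ of cardinality $|I|\cdot|M_{H}|=\alpha(G)\nu_{I}(H)$.

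For the upper bound, let $M$ be any induced matching of $G\circ H$ and split $M=M_{f}\cup M_{c}$, where $M_{f}$ consists of the edges lying inside a single fiber and $M_{c}$ of the edges joining distinct fibers. For every $g\in V(G)$, set $M_{g}=M_{f}\cap E(^{g}\!H)$ and $S=\{g\in V(G):M_{g}\neq \emptyset\}$. Each $M_{g}$ is an induced matching of $^{g}\!H\cong H$, so $|M_{g}|\leq \nu_{I}(H)$; moreover, if $g,g'\in S$ with $gg'\in E(G)$, then every vertex of $^{g}\!H$ is adjacent in $G\circ H$ to every vertex of $^{g'}\!H$, and hence endpoints of edges in $M_{g}$ and $M_{g'}$ would be joined by an edge of $G\circ H$, contradicting the induced-matching property. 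Thus $S$ is independent in $G$.

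The crux of the argument is the structural analysis of $M_{c}$ through the edge projection ${\rm pr}_{G}$. Using the same complete-bipartite-between-adjacent-fibers feature, I would verify three things: the $G$-projections ${\rm pr}_{G}(e)$ with $e\in M_{c}$ are $k=|M_{c}|$ pairwise vertex-disjoint edges $\{a_{1}b_{1},\ldots,a_{k}b_{k}\}$ of $G$; the induced subgraph $G[R]$ on $R=\{a_{1},b_{1},\ldots,a_{k},b_{k}\}$ carries no further edges; and no vertex of $S$ equals or is $G$-adjacent to any vertex of $R$. In every forbidden configuration one produces an edge of $G\circ H$ between endpoints of two distinct edges of $M$, which is impossible. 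Consequently, $S\cup \{a_{1},\ldots,a_{k}\}$ is an independent set of $G$, so $|S|+k\leq \alpha(G)$, and, assuming $\nu_{I}(H)\geq 1$ (the degenerate case being discussed separately), one concludes
\[
|M|=\sum_{g\in S}|M_{g}|+k\;\leq\; |S|\nu_{I}(H)+k\;\leq\; (|S|+k)\nu_{I}(H)\;\leq\; \alpha(G)\nu_{I}(H).
\]

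The main obstacle is precisely this structural study of $M_{c}$: whereas the fiber part $M_{f}$ reduces cleanly to induced matchings in copies of $H$, each cross edge must be ``charged'' to one distinct \emph{new} independent vertex of $G$ in a way that neither clashes with $S$ nor with the other cross edges. Once this charging, driven by the rigid structure of the lexicographic product across adjacent fibers, is in place, combining with $|M_{g}|\leq \nu_{I}(H)$ yields the desired inequality, which, together with the explicit construction of the lower bound, gives the claimed equality.
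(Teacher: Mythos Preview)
Your proposal is correct and follows essentially the same route as the paper: both split a maximum induced matching into fiber edges and cross edges, show that the $G$-projections of the cross edges form an induced matching in $G$ disjoint and non-adjacent to the set $S={\rm pr}_G$ of the fiber part, pick one endpoint from each projected cross edge to enlarge $S$ to an independent set of $G$, and finish with the inequality chain $|M_f|+|M_c|\le |S|\nu_I(H)+k\le(|S|+k)\nu_I(H)\le\alpha(G)\nu_I(H)$; the lower-bound construction is likewise identical. Your explicit mention of the degenerate case $\nu_I(H)=0$ is a small refinement the paper omits.
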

\begin{proof}
Let $Q$ be a $\nu_{I}(G\circ H)$-set. We set $Q=Q_{1}\cup Q_{2}$, in which $Q_{1}=\{(g,h)(g',h')\in Q\mid g\neq g'\}$ and $Q_{2}=Q\setminus Q_{1}$. Notice that ${\rm pr}_{G}(Q_{1})=\bigcup_{e\in Q_{1}}{\rm pr}_{G}(e)\subseteq E(G)$ and ${\rm pr}_{G}(Q_{2})=\bigcup_{e\in Q_{2}}{\rm pr}_{G}(e)\subseteq V(G)$. Let $e=(g,h)(g',h')\in Q_{1}$. 

Since $Q$ is an induced matching, it follows from the definition of adjacency in lexicographic product that every edge in ${\rm pr}_G(Q_1)$ is the projection of precisely one edge in $Q_1$ (in particular, $|{\rm pr}_G(Q_1)|=|Q_1|$) and that ${\rm pr}_{G}(Q_{2})\cap V({\rm pr}_{G}\big{(}Q_{1})\big{)}=\emptyset$. Moreover, two edges in ${\rm pr}_G(Q_1)$ cannot have a common endvertex nor a common edge. Thus, ${\rm pr}_G(Q_1)$ is an induced matching in $G$, and so $|{\rm pr}_{G}(Q_{1})|\leq \nu_{I}(G)$. Now let $g$ and $g'$ be arbitrary vertices of ${\rm pr}_G(Q_2)$ and $g_1g_2$ an arbitrary edge in ${\rm pr}_G(Q_1)$. Since $Q$ is an induced matching of $G \circ H$, it follows that $gg' \notin E(G)$ and thus ${\rm pr}_G(Q_2)$ is an independent set of $G$. Moreover, $g$ is not adjacent to any of the vertices $g_1$ and $g_2$.

All in all, we have proved that a subset $J$ containing the vertices in ${\rm pr}_{G}(Q_{2})$ along with one endvertex from each edge of ${\rm pr}_{G}(Q_{1})$ is an independent set in $G$, and in particular, $|{\rm pr}_{G}(Q_{1})|+|{\rm pr}_{G}(Q_{2})|\leq \alpha(G)$. Furthermore, each $H$-fiber with $g\in {\rm pr}_{G}(Q_{2})$, being isomorphic to $H$, has at most $\nu_{I}(H)$ edges from $Q_{2}$. Thus, we get
\begin{equation}\label{IM1}
\begin{array}{lcl}
\nu_{I}(G\circ H)&=&|Q|=|{\rm pr}_{G}(Q_{1})|+|Q_{2}|\\
&\leq&|{\rm pr}_{G}(Q_{1})|+|{\rm pr}_{G}(Q_{2})|\nu_{I}(H)\leq(|{\rm pr}_{G}(Q_{1})|+|{\rm pr}_{G}(Q_{2})|)\nu_{I}(H)\leq \alpha(G)\nu_{I}(H).
\end{array}
\end{equation}

Conversely, let $I$ and $A$ be an $\alpha(G)$-set and a $\nu_{I}(H)$-set, respectively. By the structure of $G\circ H$, $\{(g,h)(g,h')\in E(G\circ H)\mid g\in I\ \mbox{and}\ hh'\in A\}$ is an induced matching in $G\circ H$ of cardinality $\alpha(G)\nu_{I}(H)$. So, $\nu_{I}(G\circ H)\geq \alpha(G)\nu_{I}(H)$. This completes the proof in view of (\ref{IM1}).
\end{proof}

By using similar techniques as in the proof of Theorem \ref{lex}, we bound the EOP number of lexicographic product graphs from above and below. However, there are many different situations that must be taken into account. The bounds are widely sharp. In particular, they are the same for an infinite family of graphs $G$ \big{(}those with $\alpha(G)=\rho_{e}^{o}(G)$\big{)} regardless of our choice of $H$, leading to the exact value of the parameter in this case.

\begin{theorem}\label{thm:lex}
For any graphs $G$ and $H$,
$$\rho_{e}^{o}(G)\alpha(H)\leq \rho_{e}^{o}(G\circ H)\leq \rho_{e}^{o}(G)\alpha(H)+\rho_{e}^{o}(H)\big{(}\alpha(G)-\rho_{e}^{o}(G)\big{)}.$$
These bounds are sharp.
\end{theorem}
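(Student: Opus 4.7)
The plan is to adapt the projection-based strategy used in the proof of Theorem~\ref{lex}, but handle two new twists absent from the induced-matching setting: for the lower bound I exploit the star decomposition of an EOP set of $G$, while for the upper bound I must control the multiplicity of cross-fiber edges sitting above a given edge of $G$, and couple this with the ``active'' fibers of the in-fiber edges through a single independence argument in $G$.

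For the lower bound, let $B$ be a $\rho_e^o(G)$-set. Since $B$ is EOP, $G[B]$ is a disjoint union of induced stars $S_1,\dots,S_p$; write $c_j$ for the center and $v_{j,1},\dots,v_{j,d_j}$ for the leaves of $S_j$, so that $\sum_j d_j=\rho_e^o(G)$. Fixing any $h_0\in V(H)$ and any $\alpha(H)$-set $I\subseteq V(H)$, set
\[
B^{*}=\bigcup_{j=1}^{p}\bigl\{(c_j,h_0)(v_{j,i},h):\ i\in[d_j],\ h\in I\bigr\}.
\]
Pairwise EOP splits into three cases: two edges in different stars lie in fibers mutually disconnected in $G\circ H$ by the EOP property of $B$; two edges at distinct leaves of the same star share only $(c_j,h_0)$, and the only candidate common edge, between the two leaf fibers, is absent because distinct leaves of an induced star are non-adjacent in $G$; and two edges at the same spoke $c_jv_{j,i}$ also share $(c_j,h_0)$, the only candidate common edge $(v_{j,i},h)(v_{j,i},h')$ being absent because $I$ is independent in $H$. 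Hence $|B^{*}|=\rho_e^o(G)\alpha(H)$.

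For the upper bound, let $Q$ be a $\rho_e^o(G\circ H)$-set, and write $Q=Q_1\sqcup Q_2$ where $Q_1$ consists of the cross-fiber edges and $Q_2$ of the in-fiber edges. Three preliminary facts will be established. \emph{(a)} ${\rm pr}_G(Q_1)\subseteq E(G)$ is EOP in $G$, because any common edge of two distinct projections lifts to an actual common edge of $G\circ H$ between corresponding preimages in $Q_1$; hence $|{\rm pr}_G(Q_1)|\le\rho_e^o(G)$. \emph{(b)} For each $gg'\in{\rm pr}_G(Q_1)$, its $Q_1$-preimage has size at most $\alpha(H)$: since $gg'\in E(G)$, the subgraph between $^{g}\!H$ and $^{g'}\!H$ is complete bipartite, forcing any two preimage edges to share either the $g$-coordinate or the $g'$-coordinate; the Helly property for pairwise intersecting edges of a bipartite graph then yields a common vertex in a single fiber, after which pairwise compatibility of the remaining coordinates reduces to independence in $H$. \emph{(c)} For each $g\in V(G)$, the slice $Q_2(g):=Q_2\cap E(^{g}\!H)$ is EOP in $^{g}\!H\cong H$, so $|Q_2(g)|\le\rho_e^o(H)$.

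The critical step is then a joint independence argument in $G$ that couples ${\rm pr}_G(Q_1)$ with $Z:=\{g\in V(G):Q_2(g)\ne\emptyset\}$. Let $L$ be the union of the leaf sets of the star components of $G[{\rm pr}_G(Q_1)]$, so that $|L|=|{\rm pr}_G(Q_1)|$. I will show that $L$ and $Z$ are disjoint and that $L\cup Z$ is independent in $G$: within $L$ by the induced-star structure; within $Z$ because an edge $g_1g_2\in E(G)$ between members of $Z$, together with the complete bipartite subgraph joining $^{g_1}\!H$ and $^{g_2}\!H$, would yield a common edge for any pair of in-fiber members of $Q$ sitting on those fibers; and between $L$ and $Z$ (also forcing disjointness) because any edge $vg$ with $v\in L$ and $g\in Z$ would similarly produce a common edge of an element of $Q_1$ and an element of $Q_2$. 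This is the most delicate step, since in each sub-claim one must carefully exclude the trivial collisions in which a would-be common edge happens to equal $e_1$ or $e_2$. Once $|{\rm pr}_G(Q_1)|+|Z|\le\alpha(G)$ is established, combining with (a)--(c) and with the easy inequality $\rho_e^o(H)\le\alpha(H)$ (the leaves of the star decomposition of any EOP set of $H$ form an independent set of $H$) yields
\[
|Q|\le\alpha(H)|{\rm pr}_G(Q_1)|+\rho_e^o(H)\bigl(\alpha(G)-|{\rm pr}_G(Q_1)|\bigr)\le\rho_e^o(G)\alpha(H)+\rho_e^o(H)\bigl(\alpha(G)-\rho_e^o(G)\bigr),
\]
where the last inequality uses the monotonicity of the middle expression in $|{\rm pr}_G(Q_1)|\in[0,\rho_e^o(G)]$. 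Sharpness of both bounds will be witnessed by $H=K_1$ or by any $G$ with $\alpha(G)=\rho_e^o(G)$ (both bounds then collapse to $\rho_e^o(G)\alpha(H)$); the upper bound is realized when $\alpha(G)>\rho_e^o(G)$ by augmenting $B^{*}$ with $\rho_e^o(H)$-sets inside the fibers at $\alpha(G)-\rho_e^o(G)$ independent vertices of $G$ lying outside the support of $B$.
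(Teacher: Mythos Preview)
Your proof is correct and follows essentially the same projection-and-decomposition strategy as the paper: you split a maximum EOP set into cross-fiber and in-fiber parts, show the projection of the former is EOP in $G$ with each preimage bounded by $\alpha(H)$, and couple the leaf set of the projected stars with the set of active fibers into a single independent set of $G$. The only cosmetic differences are that you invoke the Helly property for pairwise intersecting bipartite edges explicitly and work with inequalities directly (whereas the paper argues equality via maximality through an auxiliary set $B'$), and your sharpness witnesses ($H=K_1$, or $\alpha(G)=\rho_e^o(G)$) are more trivial than the paper's wounded-spider and star-plus-pendant constructions.
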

\begin{proof}
Let $B$ be a $\rho_{e}^{o}(G\circ H)$-set. Similarly to the proof of Theorem \ref{lex}, we set $B=B_{1}\cup B_{2}$, in which $B_{1}=\{(g,h)(g',h')\in B\mid g\neq g'\}$ and $B_{2}=B\setminus B_{1}$. Recall that every edge in $B_{2}$ is of the form $(g,h)(g,h')$ for some $g\in V(G)$ and distinct vertices $h,h'\in V(H)$.

Suppose that ${\rm pr}_{G}\big{(}(g_{1},h_{1})(g_{1}',h_{1}')\big{)}$ and ${\rm pr}_{G}\big{(}(g_{2},h_{2})(g_{2}',h_{2}')\big{)}$ have a common edge, say $g_{1}g_{2}'$, in $G$ for some $(g_{1},h_{1})(g_{1}',h_{1}'),(g_{2},h_{2})(g_{2}',h_{2}')\in B_{1}$. This implies that $(g_{1},h_{1})(g_{1}',h_{1}')$ and $(g_{2},h_{2})(g_{2}',h_{2}')$ have the common edge $(g_{1},h_{1})(g_{2}',h_{2}')$ in $G\circ H$, contradicting the fact that $B$ is an EOP set in $G\circ H$. The above discussion guarantees that ${\rm pr}_{G}(B_{1})$ forms an EOP set in $G$. With this in mind, the subgraph of $G$ induced by ${\rm pr}_{G}(B_{1})$ is a disjoint union of stars $S_{1},\ldots,S_{k}$ with centers $s_{1},\ldots,s_{k}$, respectively (if $S_{i}\cong K_{2}$ for some $1\leq i\leq k$, then we consider any one of its endvertices as the center). Note that no two vertices from different stars are adjacent in $G$. Analogously to the proof of Theorem \ref{lex}, ${\rm pr}_{G}(B_{2})$ is independent in $G$, ${\rm pr}_{G}(B_{2})\cap\big{(}\bigcup_{i=1}^{k}V(S_{i})\big{)}=\emptyset$ and there is no edge with one endvertex in ${\rm pr}_{G}(B_{2})$ and the other in $\bigcup_{i=1}^{k}V(S_{i})$ in $G$. In fact, all non-central vertices of the stars $S_{1},\ldots,S_{k}$ along with all vertices in ${\rm pr}_{G}(B_{2})$ form an independent set in $G$, In particular, we infer that
\begin{equation}\label{alpha}
|{\rm pr}_{G}(B_{1})|+|{\rm pr}_{G}(B_{2})|\leq \alpha(G).
\end{equation}

We now consider an edge $(g,h)(g',h')\in B_{1}$. Let ${\rm pr}_{G}\big{(}(g,h)(g',h')\big{)}=gg'\in E(S_{i})$. Combining the adjacency rule of $G\circ H$ and the fact that $B$ is an EOP set in $G\circ H$ we infer that all edges $e\in B_{1}$ with ${\rm pr}_{G}(e)=gg'$ have a common endvertex in $\{(g,h),(g',h')\}$. Recalling that $s_{i}$ is the center of the star $S_{i}$, we may assume without loss of generality, renaming the vertices if necessary, that $g=s_{i}$. 
Set
\begin{center}
$A=\big{\{}(s_i,h)(g',h'')\in B_1\mid {\rm pr}_{G}\big{(}(g,h)(g',h'')\big{)}=s_{i}g'\big{\}}$. \end{center}
Since the endvertices of edges of $A$ induce a star in $G\circ H$,  we get $|\{h''\mid (s_i,h)(g',h'')\in A\}|\leq \alpha(H)$. With the above discussion in mind and assuming that $I$ is an $\alpha(H)$-set, it is readily observed that
\begin{center}
$B'=\bigcup_{i=1}^{k}\bigcup_{s_{i}g^*\in E(S_{i})}\bigcup_{h'\in I}\big{\{}(s_{i},h)(g^*,h')\big{\}}$
\end{center}
is an EOP set in $G\circ H$ with $|B'|\geq|B_{1}|$, for which ${\rm pr}_{G}(B')\subseteq {\rm pr}_{G}(B_{1})$. On the other hand, $d\big{(}V(G[{\rm pr}_{G}(B_{1})]),{\rm pr}_{G}(B_{2})\big{)}\geq2$. This necessarily shows that $|B_{1}|=|B'|$ since $B$ is an EOP set of $G\circ H$ of maximum cardinality.

We now turn our attention to the set $B_{2}$. Recall by the definition that all edges in $B_{2}$ are of the form $(g,h)(g,h')$. Moreover, since $B$ is a $\rho_{e}^{o}(G\circ H)$-set, it follows that there are precisely $\rho_{e}^{o}(H)$ edges from $^gH$ for every $g\in {\rm pr}_{G}(B_{2})$. In fact, the equality $|B_{2}|=|{\rm pr}_{G}(B_{2})|\rho_{e}^{o}(H)$ holds. We then, by making use of the inequality (\ref{alpha}), have
\begin{equation*}
\begin{array}{lcl}
\rho_{e}^{o}(G\circ H)&=&|B_{1}|+|B_{2}|=|B'|+|{\rm pr}_{G}(B_{2})|\rho_{e}^{o}(H)\leq|{\rm pr}_{G}(B_{1})|\alpha(H)+(\alpha(G)-|{\rm pr}_{G}(B_{1})|)\rho_{e}^{o}(H)\\
&=&|{\rm pr}_{G}(B_{1})|\big{(}\alpha(H)-\rho_{e}^{o}(H)\big{)}+\alpha(G)\rho_{e}^{o}(H)\leq \rho_{e}^{o}(G)\big{(}\alpha(H)-\rho_{e}^{o}(H)\big{)}+\alpha(G)\rho_{e}^{o}(H)\\
&=&\rho_{e}^{o}(G)\alpha(H)+\rho_{e}^{o}(H)\big{(}\alpha(G)-\rho_{e}^{o}(G)\big{)}.  
\end{array}
\end{equation*}

That the upper bound is sharp may be seen by letting $H$ be an arbitrary graph, and $G$ be obtained from a star with center $c$ and leaves $v_{1},\ldots,v_{\ell}$, with $\ell\geq2$, by joining the endvertex $u$ of the path $P:uv$ to $v_{1}$. Let $I$ and $J$ be an $\alpha(H)$-set and a $\rho_{e}^{o}(H)$-set, respectively. It is easy to see that for an arbitrary vertex $h\in V(H)$, $B=\{(c,h)(v_{i},h')\mid 1\leq i\leq \ell\ \mbox{and}\ h'\in I\}\cup \{(v,x)(v,y)\mid xy\ \mbox{belongs to}\ J\}$ is an EOP set in $G\circ H$ attaining the upper bound.

We now prove the lower bound. Let $B$ and $I$ be a $\rho_{e}^{o}(G)$-set and an $\alpha(H)$-set, respectively. Let $s_{1},\ldots,s_{k}$ be the centers of the stars in $G[B]$. It is readily observed that 
\begin{center}
$B'=\{(s_{i},h)(g,h')\mid 1\leq i\leq k, g\ \mbox{is a leaf in $G[B]$ adjacent to $s_{i}$ and $h'\in I$}\}$
\end{center} 
is an EOP set in $G\circ H$ of cardinality $\rho_{e}^{o}(G)\alpha(H)$, in which $h$ is an arbitrary vertex of $H$. This leads to the lower bound.

Let $G$ be obtained from a star $S$ with center $s$ and the set of leaves $\{v_{1},\ldots,v_{\ell}\}$ by subdividing at most $0\leq t\leq \ell-1$ edges exactly once. Finally, we let $H$ and $I$ be any graph and an $\alpha(H)$-set, respectively. Then, for any vertex $h\in V(H)$, $\{(s,h)(u,h')\mid u\in N_{G}(s)\ \mbox{and}\ h'\in I\}$ is a $\rho_{e}^{o}(G\circ H)$-set of cardinality $\alpha(H)\ell=\rho_{e}^{o}(G)\alpha(H)$. This completes the proof.
\end{proof}

Note that $\nu_{I}(G)\alpha(H)$ for $\nu_{I}(G\circ H)$ is not a counterpart of the lower bound $\rho_{e}^{o}(G)\alpha(H)$ on $\rho_{e}^{o}(G\circ H)$ given in Theorem \ref{thm:lex}. In fact, it is readily seen that $\nu_{I}(P_{2}\circ P_{3n+1})=n<\lceil(3n+1)/2\rceil=\nu_{I}(P_{2})\alpha(P_{3n+1})$. 

In contrast with triangle-free graphs, a \textit{triangular graph} is a graph such that every edge lies in some triangle. These graphs are the underlying topic of several papers such as \cite{egt,pr,p}. In view of Theorems \ref{lex} and \ref{thm:lex}, we have $\im(G\circ K_{2})=\alpha(G)=\rho_{e}^{o}(K_{2}\circ G)$ for each graph $G$. Moreover, every edge of $G\circ K_{2}$ lies in a triangle for each graph $G$ with no isolated vertices. On the other hand, it is well known that {\sc Independent Set Problem} is NP-complete (see \cite{gj}). This leads to the following result concerning the decision versions of the induced matching number and the edge open packing number. 

\begin{corollary}
{\sc Induced Matching Problem} and {\sc Edge Open Packing Problem} are NP-complete even for triangular graphs.
\end{corollary}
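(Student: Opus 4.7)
The plan is to establish both NP-hardness claims by polynomial-time reductions from the {\sc Independent Set Problem}, known to be NP-complete \cite{gj}. First I would observe that both problems lie in NP: given a candidate edge set $B$ and threshold $k$, one can verify in polynomial time whether $|B|\geq k$ and whether $B$ satisfies the pairwise defining condition of an induced matching (respectively, of an EOP set), simply by examining each pair of edges and their neighborhoods.

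For NP-hardness, given an instance $(G,k)$ of {\sc Independent Set}, I would reduce to the case in which $G$ has no isolated vertex (any isolated vertex is in every maximum independent set but does not contribute to the product once we take the lexicographic product, so it can be deleted while decrementing $k$ accordingly). I then construct, in polynomial time, the lexicographic products $G\circ K_{2}$ and $K_{2}\circ G$, each of order $2|V(G)|$. Theorem \ref{lex} gives $\nu_{I}(G\circ K_{2})=\alpha(G)\,\nu_{I}(K_{2})=\alpha(G)$, while Theorem \ref{thm:lex} applied with $\rho_{e}^{o}(K_{2})=\alpha(K_{2})=1$ forces the lower and upper bounds on $\rho_{e}^{o}(K_{2}\circ G)$ to collapse to $\alpha(G)$. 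Hence the equivalences
\[
\alpha(G)\geq k\ \Longleftrightarrow\ \nu_{I}(G\circ K_{2})\geq k\ \Longleftrightarrow\ \rho_{e}^{o}(K_{2}\circ G)\geq k
\]
yield both reductions at once.

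It remains to verify that the constructed graphs are triangular. For $G\circ K_{2}$, an edge of the form $(g,1)(g,2)$ inside a $K_{2}$-fiber forms a triangle with any $(g',i)$ where $gg'\in E(G)$ (such a $g'$ exists since $G$ has no isolated vertex); an inter-fiber edge $(g,i)(g',i')$ with $gg'\in E(G)$ forms a triangle with $(g,3-i)$, which is adjacent to both endvertices by the adjacency rule of $\circ$. An entirely analogous case analysis handles $K_{2}\circ G$. The only mildly delicate point, which I would dispatch first, is the preprocessing step removing isolated vertices so that the triangular property actually holds; after that, the argument is a direct combination of the two product formulas with the NP-completeness of Independent Set.
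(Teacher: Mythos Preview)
Your proposal is correct and follows essentially the same approach as the paper: both reduce from {\sc Independent Set} via the identities $\nu_{I}(G\circ K_{2})=\alpha(G)$ (from Theorem~\ref{lex}) and $\rho_{e}^{o}(K_{2}\circ G)=\alpha(G)$ (from Theorem~\ref{thm:lex} with the bounds collapsing). You are in fact slightly more thorough than the paper, which only remarks that $G\circ K_{2}$ is triangular and does not separately check $K_{2}\circ G$; your case analysis covers both products. One minor quibble: your parenthetical justification for deleting isolated vertices (``does not contribute to the product'') is imprecise---an isolated vertex of $G$ \emph{does} contribute to $\alpha(G)$ and hence to $\nu_{I}(G\circ K_{2})$; the real reason for removing it is purely that its $K_{2}$-fiber would be an isolated edge, spoiling triangularity---but your stated procedure (delete and decrement $k$) is correct regardless.
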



\section{Direct product}\label{sec-dir}

The induced matching number of direct product of two graphs can be bounded from below in terms of the induced matching number of the factors.

\begin{theorem}\label{induced-direct}
For any graphs $G$ and $H$, $\nu_{I}(G\times H)\geq2\nu_{I}(G)\nu_{I}(H)$. This bound is sharp.
\end{theorem}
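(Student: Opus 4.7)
The plan is to construct an explicit induced matching in $G\times H$ of size $2\nu_I(G)\nu_I(H)$ by combining induced matchings of the two factors. Let $M_G$ be a $\nu_I(G)$-set and $M_H$ a $\nu_I(H)$-set. The key observation is that for each $gg'\in M_G$ and each $hh'\in M_H$, the four vertices $\{(g,h),(g,h'),(g',h),(g',h')\}$ of $G\times H$ form (by the adjacency rule of the direct product) precisely the two crossing edges $(g,h)(g',h')$ and $(g,h')(g',h)$; no other pair among these four vertices is adjacent, since two vertices of $G\times H$ are adjacent only when they differ in both coordinates. I will therefore propose the candidate set
$$F=\bigl\{(g,h)(g',h'),\,(g,h')(g',h)\mid gg'\in M_G,\ hh'\in M_H\bigr\},$$
which has cardinality $2|M_G|\cdot|M_H|=2\nu_I(G)\nu_I(H)$.

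The core of the proof is to verify that $F$ is an induced matching. That $F$ is a matching follows from the fact that for different edges $g_1g_1',g_2g_2'\in M_G$ the vertex sets $\{g_1,g_1'\}$ and $\{g_2,g_2'\}$ are disjoint (and likewise for $M_H$), so the four-vertex ``cells'' associated with distinct pairs $(gg',hh')$ share no vertex. The induced condition is the main technical step: I must rule out, for any two edges $(g_1,h_1)(g_1',h_1'),(g_2,h_2)(g_2',h_2')\in F$ coming from different cells, the existence of an edge in $G\times H$ between one endpoint of the first and one endpoint of the second. Any such connecting edge would require both a $G$-adjacency between a vertex of $\{g_1,g_1'\}$ and a vertex of $\{g_2,g_2'\}$ and an $H$-adjacency between a vertex of $\{h_1,h_1'\}$ and a vertex of $\{h_2,h_2'\}$. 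When the two cells involve distinct edges of $M_G$, the $G$-adjacency is forbidden by the induced-matching property of $M_G$; when the $G$-edges coincide and the $H$-edges differ, the $H$-adjacency is forbidden by the induced-matching property of $M_H$; and when both coincide, the two edges belong to the same cell and we already analyzed that case.

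For the sharpness claim I would point to the infinite family $G=mK_2$ and $H=nK_2$. A direct computation shows that $mK_2\times nK_2\cong 2mn\cdot K_2$, because each of the $mn$ cells $K_2\times K_2$ contributes two disjoint edges and no edges link different cells. Hence $\nu_I(G\times H)=2mn=2\nu_I(G)\nu_I(H)$, showing that the bound is attained on a whole family (already on $K_2\times K_2\cong 2K_2$). I do not anticipate a serious obstacle; the only subtlety is the careful bookkeeping in the induced-matching verification, which reduces cleanly to applying the induced-matching hypotheses on $M_G$ and $M_H$ in the appropriate coordinate.
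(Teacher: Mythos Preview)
Your proposal is correct and follows essentially the same construction as the paper: both take maximum induced matchings $M_G$ and $M_H$ and form the set of all ``crossing'' edges $(g,h)(g',h')$ and $(g,h')(g',h)$ over $gg'\in M_G$, $hh'\in M_H$; your cell-by-cell verification is a slightly cleaner packaging of the paper's argument, which splits the same set into two halves $Q$ and $Q'$ and checks them separately before merging. The one notable difference is the sharpness example: you use the disconnected family $mK_2\times nK_2\cong 2mn\,K_2$, which is valid and simpler, whereas the paper works harder to exhibit a connected family, namely $P_{3m}\times K_n$ for $m\ge 1$ and $n\ge 3$.
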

\begin{proof}
Let $M=\{g_{1}g_{1}',\ldots,g_{|M|}g_{|M|}'\}$ and $N=\{h_{1}h_{1}',\ldots,h_{|N|}h_{|N|}'\}$ be a $\nu_{I}(G)$-set and a $\nu_{I}(H)$-set, respectively. Consider the sets $V(M)=\{g_{1},g_{1}',\ldots,g_{|M|},g_{|M|}'\}$ and $V(N)=\{h_{1},h_{1}',\ldots,h_{|N|},h_{|N|}'\}$ of saturated vertices by $M$ and $N$, respectively. Since $M$ and $N$ are induced matchings, it follows that $|V(M)|=2|M|$, $|V(N)|=2|N|$, $N_{G}(g_{i})\cap V(M)=\{g_{i}'\}$ and $N_{G}(g_{i}')\cap V(M)=\{g_{i}\}$ for every $1\leq i\leq|M|$, and $N_{H}(h_{j})\cap V(N)=\{h_{j}'\}$ and $N_{H}(h_{j}')\cap V(N)=\{h_{j}\}$ for every $1\leq j\leq|N|$. We let 
\begin{center} 
$Q_{i}=\big{\{}(g_{i},h_{j})(g_{i}',h_{j}')\in E(G\times H)\mid 1\leq j\leq|N|\big{\}}$
\end{center}
for each $1\leq i\leq|M|$, and set $Q=\bigcup_{i=1}^{|M|}Q_{i}$. Since $M$ and $N$ are induced matchings in $G$ and $H$ respectively, it is clear that no two edges of $Q$ are adjacent in $G \times H$.

Suppose that two edges $(g_{i},h_{j})(g_{i}',h_{j}'),(g_{s},h_{t})(g_{s}',h_{t}')\in Q$, for some $1\leq i,s\leq|M|$ and $1\leq j,t\leq|N|$, have a common edge $e$. Without loss of generality, we may assume that $e=(g_{i},h_{j})(g_s',h_t')$. By the chosen notation and since $N_{G}(g_{i})\cap V(M)=\{g_{i}'\}$, this implies that $i=s$. This in turn leads to $h_{j}h_{t}'\in E(H)$, while $j\neq t$. This is a contradiction. In fact, we have proved that no two edges in $Q$ have a common edge. The above discussion guarantees that $Q$ is an induced matching in $G\times H$. 

We now define 
\begin{center} 
$Q_{i}'=\big{\{}(g_{i}',h_{j})(g_{i},h_{j}')\in E(G\times H)\mid 1\leq j\leq|N|\big{\}}$
\end{center}
for every $1\leq i\leq|M|$. We observe, by a similar fashion, that $Q'=\bigcup_{i=1}^{|M|}Q_{i}'$ is also an induced matching in $G\times H$. Clearly, $Q\cap Q'=\emptyset$ and $|Q|=|Q'|$. Moreover, no two edges $e\in Q$ and $e'\in Q'$ have a common endvertex by the observations given in the first paragraph of the proof. Therefore, $Q\cup Q'$ is a matching in $G\times H$. Let $(g_{i},h_{j})(g_{i}',h_{j}')\in Q$ and $(g_{s}',h_{t})(g_{s},h_{t}')\in Q'$ have a common edge $e$. We may assume that $(g_{i},h_{j})$ is incident with $e$. Since $g_{i}g_{s}\notin E(G)$, we deduce that $e=(g_{i},h_{j})(g_{s}',h_{t})\in E(G\times H)$. This in turn implies that $h_{j}h_{t}\in E(H)$, contrary to $N$ being an induced matching in $H$. Consequently, $Q\cup Q'$ is an induced matching in $G\times H$. Thus, $\nu_{I}(G\times H)\geq|Q\cup Q'|=2|Q|=2\nu_{I}(G)\nu_{I}(H)$.

The bound is sharp for several infinite families of graphs. For instance, we verify the sharpness for $P_{3m}\times K_{n}$ for all integers $m\geq1$ and $n\geq3$. Note that $P_{3m}\times K_{n}$ is isomorphic to the graph $F$ constructed as follows. Let $V(F)=V_{1}\cup \ldots \cup V_{3m}$, in which $V_{i}=\{v_{i1},\ldots,v_{in}\}$ for each $1\leq i\leq3m$, with edges with one endvertex in $V_{i}$ and the other in $V_{i+1}$ such that the resulting induced subgraph $F[V_{i}\cup V_{i+1}]$ is obtained from $K_{n,n}$ by removing the perfect matching $\{v_{i1}v_{(i+1)1},\ldots,v_{in}v_{(i+1)n}\}$ for every $1\leq i\leq3m-1$. It is readily seen, by the definition of an induced matching and the structure of $F$, that an induced matching in $F$ has at most two edges from $F[V_{3i-2}\cup V_{3i-1}\cup V_{3i}]$ for each $1\leq i\leq m$. This implies that $\nu_{I}(F)\leq2m$. On the other hand, it is clear that $\nu_{I}(P_{3m})=m$. Therefore, $\nu_{I}(F)\geq2\nu_{I}(P_{3m})\nu_{I}(K_{n})=2m$. This completes the proof. 
\end{proof}

Note that the inequality given in Theorem \ref{induced-direct} cannot be generalized to EOP numbers. That is, $\rho_{e}^{o}(G\times H)\geq2\rho_{e}^{o}(G)\rho_{e}^{o}(H)$ is not true as it stands. In fact, $P_{3}\times P_{12n-5}$ for all integers $n\geq1$ serve as an infinite family of counterexamples. To see this, it can be readily checked that $\rho_{e}^{o}(P_{3}\times P_{12n-5})=24n-10$ while $2\rho_{e}^{o}(P_{3})\rho_{e}^{o}(P_{12n-5})=24n-8$ (see Figure \ref{Figure2} for $n=2$).

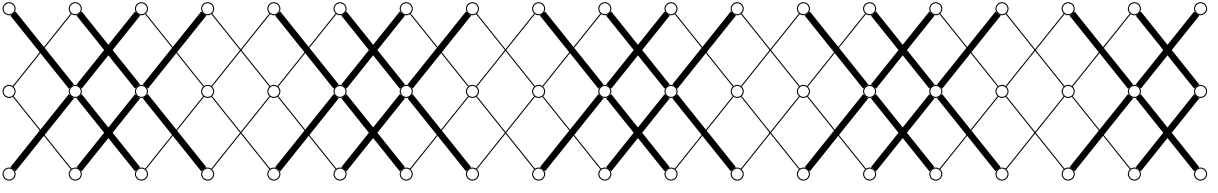
\begin{figure}[ht]
\centering
\begin{tikzpicture}[scale=0.44, transform shape]
\node [draw, shape=circle] (x1) at (0,0) {};
\node [draw, shape=circle] (x2) at (2,0) {};
\node [draw, shape=circle] (x3) at (4,0) {};
\node [draw, shape=circle] (x4) at (6,0) {};
\node [draw, shape=circle] (x5) at (8,0) {};
\node [draw, shape=circle] (x6) at (10,0) {};
\node [draw, shape=circle] (x7) at (12,0) {};
\node [draw, shape=circle] (x8) at (14,0) {};
\node [draw, shape=circle] (x9) at (16,0) {};
\node [draw, shape=circle] (x10) at (18,0) {};
\node [draw, shape=circle] (x11) at (20,0) {};
\node [draw, shape=circle] (x12) at (22,0) {};
\node [draw, shape=circle] (x13) at (24,0) {};
\node [draw, shape=circle] (x14) at (26,0) {};
\node [draw, shape=circle] (x15) at (28,0) {};
\node [draw, shape=circle] (x16) at (30,0) {};
\node [draw, shape=circle] (x17) at (32,0) {};
\node [draw, shape=circle] (x18) at (34,0) {};
\node [draw, shape=circle] (x19) at (36,0) {};


\node [draw, shape=circle] (y1) at (0,-2.5) {};
\node [draw, shape=circle] (y2) at (2,-2.5) {};
\node [draw, shape=circle] (y3) at (4,-2.5) {};
\node [draw, shape=circle] (y4) at (6,-2.5) {};
\node [draw, shape=circle] (y5) at (8,-2.5) {};
\node [draw, shape=circle] (y6) at (10,-2.5) {};
\node [draw, shape=circle] (y7) at (12,-2.5) {};
\node [draw, shape=circle] (y8) at (14,-2.5) {};
\node [draw, shape=circle] (y9) at (16,-2.5) {};
\node [draw, shape=circle] (y10) at (18,-2.5) {};
\node [draw, shape=circle] (y11) at (20,-2.5) {};
\node [draw, shape=circle] (y12) at (22,-2.5) {};
\node [draw, shape=circle] (y13) at (24,-2.5) {};
\node [draw, shape=circle] (y14) at (26,-2.5) {};
\node [draw, shape=circle] (y15) at (28,-2.5) {};
\node [draw, shape=circle] (y16) at (30,-2.5) {};
\node [draw, shape=circle] (y17) at (32,-2.5) {};
\node [draw, shape=circle] (y18) at (34,-2.5) {};
\node [draw, shape=circle] (y19) at (36,-2.5) {};


\node [draw, shape=circle] (z1) at (0,-5) {};
\node [draw, shape=circle] (z2) at (2,-5) {};
\node [draw, shape=circle] (z3) at (4,-5) {};
\node [draw, shape=circle] (z4) at (6,-5) {};
\node [draw, shape=circle] (z5) at (8,-5) {};
\node [draw, shape=circle] (z6) at (10,-5) {};
\node [draw, shape=circle] (z7) at (12,-5) {};
\node [draw, shape=circle] (z8) at (14,-5) {};
\node [draw, shape=circle] (z9) at (16,-5) {};
\node [draw, shape=circle] (z10) at (18,-5) {};
\node [draw, shape=circle] (z11) at (20,-5) {};
\node [draw, shape=circle] (z12) at (22,-5) {};
\node [draw, shape=circle] (z13) at (24,-5) {};
\node [draw, shape=circle] (z14) at (26,-5) {};
\node [draw, shape=circle] (z15) at (28,-5) {};
\node [draw, shape=circle] (z16) at (30,-5) {};
\node [draw, shape=circle] (z17) at (32,-5) {};
\node [draw, shape=circle] (z18) at (34,-5) {};
\node [draw, shape=circle] (z19) at (36,-5) {};


\draw (x1)--(y2)--(x3)--(y4)--(x5)--(y6)--(x7)--(y8)--(x9)--(y10)--(x11)--(y12)--(x13)--(y14)--(x15)--(y16)--(x17)--(y18)--(x19);
\draw (y1)--(x2)--(y3)--(x4)--(y5)--(x6)--(y7)--(x8)--(y9)--(x10)--(y11)--(x12)--(y13)--(x14)--(y15)--(x16)--(y17)--(x18)--(y19);
\draw (y1)--(z2)--(y3)--(z4)--(y5)--(z6)--(y7)--(z8)--(y9)--(z10)--(y11)--(z12)--(y13)--(z14)--(y15)--(z16)--(y17)--(z18)--(y19);
\draw (z1)--(y2)--(z3)--(y4)--(z5)--(y6)--(z7)--(y8)--(z9)--(y10)--(z11)--(y12)--(z13)--(y14)--(z15)--(y16)--(z17)--(y18)--(z19);


\draw[line width=2.5pt] (x1)--(y2)--(x3);
\draw[line width=2.5pt] (x5)--(y6)--(x7);
\draw[line width=2.5pt] (x9)--(y10)--(x11);
\draw[line width=2.5pt] (x13)--(y14)--(x15);
\draw[line width=2.5pt] (x17)--(y18)--(x19);

\draw[line width=2.5pt] (z1)--(y2)--(z3);
\draw[line width=2.5pt] (z5)--(y6)--(z7);
\draw[line width=2.5pt] (z9)--(y10)--(z11);
\draw[line width=2.5pt] (z13)--(y14)--(z15);
\draw[line width=2.5pt] (z17)--(y18)--(z19);


\draw[line width=2.5pt] (x2)--(y3)--(x4);
\draw[line width=2.5pt] (x6)--(y7)--(x8);
\draw[line width=2.5pt] (x10)--(y11)--(x12);
\draw[line width=2.5pt] (x14)--(y15)--(x16);
\draw[line width=2.5pt] (x18)--(y19);

\draw[line width=2.5pt] (z2)--(y3)--(z4);
\draw[line width=2.5pt] (z6)--(y7)--(z8);
\draw[line width=2.5pt] (z10)--(y11)--(z12);
\draw[line width=2.5pt] (z14)--(y15)--(z16);
\draw[line width=2.5pt] (z18)--(y19);

\end{tikzpicture}
\caption{{\small The graph $P_{3}\times P_{19}$. The bold edges form an optimal EOP set.}}\label{Figure2}
\end{figure}

In view of the above remark, we employ a different approach so as to bound the EOP number from below in the case of direct product graphs.

\begin{theorem}\label{Direct}
For any graphs $G$ and $H$, 
$$\rho_{e}^{o}(G\times H)\geq \max\Big{\{}\rho_{e}^{o}(G)\delta(H)\rho^{o}(H),\rho_{e}^{o}(H)\delta(G)\rho^{o}(G)\Big{\}}.$$ 
Moreover, this bound is sharp. 
\end{theorem}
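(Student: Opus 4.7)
The plan is to construct an explicit edge open packing set in $G\times H$ of the required size. By commutativity of the direct product, it suffices to prove $\rho_{e}^{o}(G\times H)\geq \rho_{e}^{o}(G)\delta(H)\rho^{o}(H)$; the other term in the maximum then follows by swapping the roles of $G$ and $H$.

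I will exploit both the star structure of an optimal EOP set of $G$ and the open packing of $H$. Let $B$ be a $\rho_{e}^{o}(G)$-set, so that $G[B]$ is a disjoint union of induced stars with centers $s_1,\ldots,s_k$ and leaf sets $L_1,\ldots,L_k$ (for a $K_2$-component either endpoint may serve as the center). Let $P$ be a $\rho^{o}(H)$-set, and for each $h\in P$ fix a subset $N^{*}(h)\subseteq N_H(h)$ of size exactly $\delta(H)$. The candidate set will be
\[
B^{*}=\bigl\{(s_i,h)(\ell,h')\,:\,i\in[k],\ \ell\in L_i,\ h\in P,\ h'\in N^{*}(h)\bigr\}.
\]
Since star centers and star leaves form disjoint vertex sets in $G[B]$, the tuples $(i,\ell,h,h')$ produce distinct edges, and so $|B^{*}|=|B|\cdot|P|\cdot\delta(H)=\rho_{e}^{o}(G)\rho^{o}(H)\delta(H)$.

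The crux will be verifying that $B^{*}$ is an EOP set. Taking two distinct edges $e_1=(s_{i_1},h_1)(\ell_1,h_1')$ and $e_2=(s_{i_2},h_2)(\ell_2,h_2')$ of $B^{*}$, I will rule out both potential triangle- and path-based violations by inspecting the four endpoint pairs. For $i_1\neq i_2$, the induced-star structure of $G[B]$ combined with EOP of $B$ ensures that every pair drawn from $\{s_{i_1},\ell_1\}\times\{s_{i_2},\ell_2\}$ is non-adjacent in $G$, which precludes any adjacency in $G\times H$ via projection to the first coordinate. For $i_1=i_2$, the only first-coordinate adjacencies among the endpoint pairs are $(s_{i_1},\cdot)$-$(\ell_2,\cdot)$ and $(\ell_1,\cdot)$-$(s_{i_2},\cdot)$; for either to induce an edge in $G\times H$, we would need $h_1h_2'\in E(H)$ or $h_1'h_2\in E(H)$, which would put a vertex into $N_H(h_1)\cap N_H(h_2)$ and thus contradict $h_1,h_2\in P$, provided $h_1\neq h_2$. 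When $h_1=h_2$, the two edges share $(s_{i_1},h_1)$ and the only remaining obstruction is an edge between $(\ell_1,h_1')$ and $(\ell_2,h_2')$ closing a triangle; this would require $\ell_1\ell_2\in E(G)$, which fails either because $\ell_1=\ell_2$ (no loops) or because $\ell_1,\ell_2$ are distinct leaves of the same induced star and hence non-adjacent in $G$.

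The main obstacle will be carefully juggling the case split on $(\ell_1,\ell_2)$ and $(h_1,h_2)$ while tracking all four endpoint pairs, so that the EOP property of $B$ in $G$ and the open-packing property of $P$ in $H$ can be combined without gap. For sharpness, taking $G=K_{1,n}$ and $H=K_2$ yields $G\times H$ equal to two vertex-disjoint copies of $K_{1,n}$; hence $\rho_{e}^{o}(G\times H)=2n=\rho_{e}^{o}(K_{1,n})\cdot\delta(K_2)\cdot\rho^{o}(K_2)=n\cdot 1\cdot 2$, attaining the bound.
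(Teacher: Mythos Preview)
Your proof is correct and follows essentially the same approach as the paper: both build the candidate EOP set in $G\times H$ by placing, for each star center $s_i$ of $G[B]$ and each $h$ in a maximum open packing of $H$, the edges from $(s_i,h)$ to the leaf-neighbor pairs, and then verify the EOP property by the same case analysis on whether the two edges come from the same star and/or the same open-packing vertex. The only cosmetic differences are that the paper uses full neighborhoods $N_H(h_i)$ (bounding $\sum_i\deg_H(h_i)\ge\delta(H)\rho^o(H)$ at the end) rather than trimmed $\delta(H)$-subsets, and the paper's sharpness example is $K_m\times K_n$ with $m\ge n\ge 3$, whereas your $K_{1,n}\times K_2\cong 2K_{1,n}$ is a simpler witness.
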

\begin{proof}
If at least one of $G$ and $H$ is an edgeless graph, then so is $G\times H$, and the lower bound trivially holds with equality. So, we assume that both $G$ and $H$ have edges.

Let $A$ be a $\rho_{e}^{o}(G)$-set. Note, by the definition, that the subgraph $F$ of $G$ induced by $A$ is a disjoint union of stars. Let $K$ be the vertex subset of $G$ containing the centers of the stars in $F$ (if a star in $F$ is isomorphic to $P_{2}$, we choose only one of its endvertices). Consider an arbitrary edge $gg'\in A$, in which $g\in K$. Let $B=\{h_{1},\ldots,h_{\rho^{o}(H)}\}$ be a $\rho^{o}(H)$-set. For each $1\leq i\leq \rho^{o}(H)$, by the adjacency rule of direct product graphs, $(g,h_{i})$ is adjacent to all vertices $(g',h)$ with $h\in N_{H}(h_{i})$. We now set 
$$P=\bigcup_{g\in K}\bigcup_{i=1}^{\rho^{o}(H)}\Big{\{}(g,h_{i})(g',h)\mid gg'\in A\ \mbox{and}\ h\in N_{H}(h_{i})\Big{\}}.$$

Suppose that there exist two edges $e=(g_{1},h_{i})(g_{1}',h)$ and $f=(g_{2},h_{j})(g_{2}',h')$ in $P$ having a common edge in $G\times H$. We distinguish two cases depending on $g_{1}$ and $g_{2}$.\vspace{0.75mm}\\
\textit{Case 1}. Let $g_{1}=g_{2}=g$ for some $g\in K$. Suppose first that $g_{1}'=g_{2}'$. We need to consider two more possibilities.\vspace{0.75mm}\\
\textit{Subcase 1.1}. $h_{i}=h_{j}$. Then, both $(g_{1}',h)$ and $(g_{2}',h')$ are adjacent to $(g,h_{i})$. In such a situation,  the common edge joins $(g_{1}',h)$ to $(g_{2}',h')$. This is a contradiction since $g_{1}'=g_{2}'$.\vspace{0.75mm}\\
\textit{Subcase 1.2}. $h_{i}\neq h_{j}$. If $h=h'$, then $h\in N_{H}(h_{i})\cap N_{H}(h_{j})$. This is impossible due to the fact that $h_{i},h_{j}\in B$ and that $B$ is an open packing in $H$. Therefore, $h\neq h'$. Note that the edges $(g,h_{i})(g,h_{j})$ and $(g_{1}',h)(g_{2}',h')$ do not exist in $G\times H$ by the adjacency rule of $G\times H$. So, we may assume without loss of generality that $(g_{1}',h)(g,h_{j})\in E(G\times H)$ is the common edge. In particular, this implies that $h\in N_{H}(h_{i})\cap N_{H}(h_{j})$. This is a contradiction as $B$ is an open packing in $H$ and $h_{i},h_{j}\in B$.\vspace{0.75mm}

Suppose now that $g_{1}'\neq g_{2}'$. We consider two possibilities depending on $h_{i}$ and $h_{j}$.\vspace{0.75mm}\\
\textit{Subcase 1.3}. $h_{i}=h_{j}$. In such a case, $(g_{1}',h)$ must be adjacent to $(g_{2}',h')$. In particular, we have $g_{1}'g_{2}'\in E(G)$. This implies the existence of the triangle $gg_{1}'g_{2}'g$ in $G$, in which the edges $gg_{1}'$ and $gg_{2}'$ belong to $A$. This is a contradiction because $A$ is an EOP set in $G$.\vspace{0.75mm}\\
\textit{Subcase 1.4}. $h_{i}\neq h_{j}$. By the adjacency rule of $G\times H$, we infer that $(g_{1}',h)(g_{2}',h')\in E(G\times H)$, or, we can assume without loss of generality that $(g,h_{i})(g_{2}',h')\in E(G\times H)$. Note that $(g_{1}',h)(g_{2}',h')\in E(G\times H)$ does not happen, otherwise we have the triangle $gg_{1}'g_{2}'g$ in $G$ with $gg_{1}',gg_{2}'\in A$. On the other hand, $(g,h_{i})(g_{2}',h')\in E(G\times H)$ implies that $h'\in N_{H}(h_{i})\cap N_{H}(h_{j})$, contradicting the fact that $B$ is an open packing in $H$ and $h_{i},h_{j}\in B$.\vspace{0.75mm}\\
\textit{Case 2.} Let $g_{1}\neq g_{2}$. By our choices of the vertices in $K$, $g_{1}$ and $g_{2}$ are the centers of two stars $K_{1,a}$ and $K_{1,b}$ in the subgraph of $G$ induced by $A$, respectively. Since $A$ is an EOP in $G$, it follows that $d_{G}\big{(}V(K_{1,a}),V(K_{1,b})\big{)}\geq2$. This in particular guarantees that $g_{1}g_{2},g_{1}g_{2}',g_{1}'g_{2},g_{1}'g_{2}'\notin E(G)$. Therefore, $(g_{1},h_{i})(g_{1}',h)$ and $(g_{2},h_{j})(g_{2}',h')$ do not have any common edge.\vspace{0.75mm}

All in all, we have proved that every two distinct edges in $P$ have no common edges. Hence, $P$ is an EOP set in $G\times H$. We now consider an arbitrary vertex $g\in K$ and let $g$ be the center of the star $K_{1,a}$ in the induced subgraph of $G$ by $A$. Corresponding to each edge of $K_{1,a}$ (as a subgraph of $G$), $(g,h_{i})$ is incident with $\deg_{H}(h_{i})$ edges in $P$. Consequently, 
$$\rho_{e}^{o}(G\times H)\geq|P|=\rho_{e}^{o}(G)\sum_{i=1}^{\rho^{o}(H)}\deg_{H}(h_{i})\geq \rho_{e}^{o}(G)\delta(H)\rho^{o}(H).$$
Interchanging the roles of $G$ and $H$ yields the desired lower bound.

For the sharpness of the bound, consider the direct product $K_{m}\times K_{n}$ with $m\ge n\geq3$. Let $B$ be a $\rho_{e}^{o}(K_{m}\times K_{n})$-set. Fix an edge $e=(g,h)(g',h')\in B$. Let $Q_{1}=[\{g\}\times V(H),\{g'\}\times V(H)]$ and $Q_{2}=[V(G)\times \{h\},V(G)\times \{h'\}]$. Let $f$ be an edge of $K_{m}\times K_{n}$ that does not belong to $Q_1\cup Q_2$. If $f$ does not have a common endvertex with $e$, then clearly $f$ has a common edge with $e$, and so $f\notin B$. On the other hand, if $f$ has a common endvertex with $e$, then $e$ and $f$ lie on a triangle in $K_{m}\times K_{n}$, thus they again have a common edge. Both observations together imply that $B\subseteq Q_1\cup Q_2$. 

Now, consider the edges in $Q_1$ (and $Q_2$). If an edge $f\in Q_1$ does not have a common endvertex with $e$, then $f=(g,h_1)(g',h_2)$, where $h_1\ne h$ and $h_2\ne h'$. 
Suppose that $f\ne (g,h')(g',h)$. Then, it is easy to see that $e$ and $f$ have a common edge, and so $f\notin B$. By symmetry, we derive that an edge $f$ in $Q_2\setminus \{(g,h')(g',h)\}$, which does not have a common endvertex with $e$, does not belong to $B$. We distinguish two cases. First, suppose $(g,h')(g',h)\in B$. Then, any edge in $(Q_1\cup Q_2)\setminus \{(g,h')(g',h),e\}$ has a common edge with at least one edge in $\{(g,h')(g',h),e\}$, which implies $|B|=2$. This is possible only when $m=n=3$, in which case we indeed have $\rho_{e}^{o}(K_{3}\times K_{3})=2$. Second, let $(g,h')(g',h)\notin B$. We have already established that every edge $f$ in $B$ must be in $Q_1\cup Q_2$, and must have a common endvertex with $e$. Suppose that $\{f_1,f_2\}\subseteq B\setminus\{e\}$, where $f_1$ has $(g,h)$ as an endvertex, while $f_2$ has $(g',h')$ as an endvertex. Then, $e$ is a common edge of $f_1$ and $f_2$, a contradiction with $B$ being an EOP set. Thus, we may assume without loss of generality that all edges in $B$ have $(g,h)$ as an endvertex. Taking any two such edges, where $f_1\in Q_1\setminus\{e\}$ and $f_2\in Q_2\setminus\{e\}$, we note that $f_1$ and $f_2$ cannot both be in $B$ since they lie in a triangle. Therefore, $B$  consists of either all edges in $Q_1$ or all edges in $Q_2$ whose common endvertex is $(g,h)$. Since $m\ge n$, we may assume that the latter possibility appears, and so $|B|=m-1=\rho_{e}^{o}(K_{m}\times K_{n})=\max\Big{\{}\rho_{e}^{o}(K_m)\delta(K_n)\rho^{o}(K_n),\rho_{e}^{o}(K_n)\delta(K_m)\rho^{o}(K_m)\Big{\}}$. 
\end{proof}

As shown in the proof of Theorem \ref{Direct}, the lower bound is sharp for some families of graphs with large minimum degree. However, there are some infinite families of graphs with small minimum degree attaining the lower bound. Jha and Klav\v{z}ar (\cite{JK}) proved that $\alpha(C_{m}\times C_{n})=mn/2$ when both $m$ and $n$ are even. With this in mind and the fact that $\rho_{e}^{o}(G)\leq \alpha(G)$ for all graphs $G$, we deduce that $8mn=\rho_{e}^{o}(C_{4m})\delta(C_{4n})\rho^{o}(C_{4n})\leq \rho_{e}^{o}(C_{4m}\times C_{4n})\leq8mn$.

We close this section by remarking that the $\nu_{I}$ version of the inequality given in Theorem \ref{Direct} does not hold. In fact, an argument similar to that for $K_{m}\times K_{n}$ in the proof of the theorem shows that $\im(K_{m}\times K_{n})=2$ for $m\geq n\geq4$, while $\max\big{\{}\im(K_m)\delta(K_n)\rho^{o}(K_n),\im(K_n)\delta(K_m)\rho^{o}(K_m)\big{\}}=m-1\geq3$.


\section{Cartesian and strong products}\label{sec-Cp}

Many graph properties are hereditary in the sense that (spanning) subgraphs inherit the property that is satisfied by the considered graph. In particular, this holds for a number of properties that arise from graph invariants such as the chromatic number, the chromatic index, the independence number, the (open) packing number, etc. However,  $\rho_{e}^{o}(H)$ and $\rho_{e}^{o}(G)$ are incomparable in general, where $H$ is a spanning subgraph of $G$. Moreover, $\rho_{e}^{o}(G)-\rho_{e}^{o}(H)$ and $\rho_{e}^{o}(G)-\rho_{e}^{o}(H)$ can be arbitrarily large. 

\begin{obs}\label{obs1}
For every integer $r\geq1$, there exist a connected graph $G$ with a connected spanning subgraph $H$ of $G$ such that $\rho_{e}^{o}(G)-\rho_{e}^{o}(H)=r$ \emph{(}respectively, $\rho_{e}^{o}(H)-\rho_{e}^{o}(G)=r$\emph{)}.
\end{obs}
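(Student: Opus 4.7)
The plan is to exhibit, for each direction and each $r\ge 1$, an explicit pair $(G,H)$ of connected graphs with $H$ a spanning subgraph of $G$ realizing the prescribed difference. The constructions for the two directions will be quite different.

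For the direction $\rho_e^o(H)-\rho_e^o(G)=r$, I would take $G=K_{2r+2}$ and let $H$ be any spanning Hamiltonian path $P_{2r+2}$ of $G$; both are connected by construction. Any two edges of a complete graph $K_n$ with $n\ge 3$ have a common edge---either the third side of their triangle, if they share a vertex, or any of the four edges joining their four endpoints otherwise---so $\rho_e^o(K_{2r+2})=1$. Since $2r+2\not\equiv 3\pmod 4$, Proposition~\ref{Pro1} gives $\rho_e^o(P_{2r+2})=\lceil(2r+1)/2\rceil=r+1$, and the difference is exactly $r$.

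For the direction $\rho_e^o(G)-\rho_e^o(H)=r$, I would take $G=K_{2,3r}$ with bipartition $\{a,b\}\cup\{v_1,\ldots,v_{3r}\}$, and let $H$ be obtained by deleting the $2r$ edges $\{av_i: 1\le i\le r\}\cup\{bv_j: r+1\le j\le 2r\}$. Connectivity of $H$ is immediate: every $v_i$ retains a neighbor in $\{a,b\}$, and $v_{2r+1}$ (which exists since $r\ge 1$) is adjacent to both $a$ and $b$ in $H$. A direct check yields $\rho_e^o(K_{2,n})=n$, since the star at $a$ is an EOP set of size $n$ while any pair $\{av_i,bv_j\}$ with $i\ne j$ admits $av_j$ as a common edge; in particular $\rho_e^o(G)=3r$.

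The main step, and the principal obstacle, is evaluating $\rho_e^o(H)$. Partition the indices as $I_1:=\{r+1,\ldots,2r\}$, $I_2:=\{1,\ldots,r\}$, and $I_3:=\{2r+1,\ldots,3r\}$. A pair $\{av_i,bv_j\}$ of edges of $H$ is EOP-compatible iff both $av_j$ and $bv_i$ are missing from $H$, i.e., iff $i\in I_1$ and $j\in I_2$. It then follows that every EOP set of $H$ falls into one of three regimes: only $a$-edges, indexed in $I_1\cup I_3$ (of size at most $2r$); only $b$-edges, indexed in $I_2\cup I_3$ (of size at most $2r$); or a mixed set whose $a$-indices lie in $I_1$ and $b$-indices in $I_2$ (of size at most $r+r=2r$). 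Ruling out hybrid combinations across these regimes---for instance, attempting to append an $a$-edge $av_i$ with $i\in I_3$ to a mixed set containing some $bv_j$ forces $bv_i\in E(H)$ to serve as a common edge---pins down $\rho_e^o(H)=2r$, and hence $\rho_e^o(G)-\rho_e^o(H)=3r-2r=r$, as required.
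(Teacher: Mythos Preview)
Your proof is correct, with one small caveat noted below. For the direction $\rho_e^o(H)-\rho_e^o(G)=r$ you use essentially the paper's approach: a complete graph (you take $K_{2r+2}$ with a spanning path, the paper takes $K_{2r+3}$ with a spanning cycle), exploiting that $\rho_e^o(K_n)=1$ together with the known path/cycle values.

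For the direction $\rho_e^o(G)-\rho_e^o(H)=r$ your construction is genuinely different from the paper's. The paper builds $G$ by gluing $2r+1$ copies of a $4$-vertex ``kite'' along a path and removes one edge from each kite to form $H$, obtaining $(\rho_e^o(G),\rho_e^o(H))=(4r+2,3r+2)$. Your complete-bipartite construction $G=K_{2,3r}$ is more economical and arguably more transparent: once one knows $\rho_e^o(K_{2,n})=n$, the computation of $\rho_e^o(H)$ reduces to a clean case analysis on which of $a,b$ the edges are incident with. The paper's construction, by contrast, avoids bipartite machinery but requires the reader to verify two less obvious EOP values.

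The caveat: your compatibility claim ``$\{av_i,bv_j\}$ is EOP-compatible iff $i\in I_1$ and $j\in I_2$'' tacitly assumes $i\neq j$. When $i=j\in I_3$ the pair $\{av_i,bv_i\}$ is in fact EOP-compatible (they share $v_i$, and $H$ has no edge $ab$), so your trichotomy misses a fourth regime. This does not harm the conclusion, since any EOP set containing such a pair is forced to equal $\{av_i,bv_i\}$ (adding any further edge creates a common edge through $a$ or $b$), hence has size $2\le 2r$. You should add one sentence to close this case.
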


To see this, first consider the graph $G$ depicted in Figure \ref{Figure1} and let $H=G-\{x_{1}y_{1},\ldots,x_{2r+1}y_{2r+1}\}$. It is then easy to verify that $\big{(}\rho_{e}^{o}(G),\rho_{e}^{o}(H)\big{)}=(4r+2,3r+2)$. On the other hand, assuming that $G'=K_{2r+3}$ and $H'$ is a spanning cycle in $G'$, we have $\big{(}\rho_{e}^{o}(G'),\rho_{e}^{o}(H')\big{)}=(1,r+1)$.

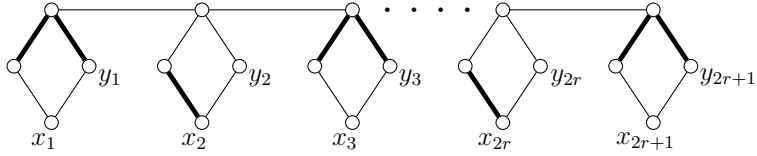
\begin{figure}[ht]
 \centering
\begin{tikzpicture}[scale=0.5, transform shape]
\node [draw, shape=circle] (x1) at (0,0) {};
\node [draw, shape=circle] (x2) at (4,0) {};
\node [draw, shape=circle] (x3) at (8,0) {};
\node [scale=1.7] at (10,0) {\huge $.\ .\ .\ .$};
\node [draw, shape=circle] (x4) at (12,0) {};
\node [draw, shape=circle] (x5) at (16,0) {};

\node [scale=1.7] at (-0.2,-3.5) {\large $x_{1}$};
\node [scale=1.7] at (3.8,-3.5) {\large $x_{2}$};
\node [scale=1.7] at (7.8,-3.5) {\large $x_{3}$};
\node [scale=1.7] at (11.8,-3.5) {\large $x_{2r}$};
\node [scale=1.7] at (15.8,-3.5) {\large $x_{2r+1}$};

\node [scale=1.7] at (1.55,-1.8) {\large $y_{1}$};
\node [scale=1.7] at (5.55,-1.8) {\large $y_{2}$};
\node [scale=1.7] at (9.55,-1.8) {\large $y_{3}$};
\node [scale=1.7] at (13.7,-1.8) {\large $y_{2r}$};
\node [scale=1.7] at (18,-1.8) {\large $y_{2r+1}$};

\draw (x1)--(x2)--(x3);
\draw (x4)--(x5);

\node [draw, shape=circle] (x11) at (-1,-1.5) {};
\node [draw, shape=circle] (x12) at (1,-1.5) {};
\node [draw, shape=circle] (x13) at (0,-3) {};

\draw (x11)--(x13)--(x12);
\draw[line width=1.8pt] (x11)--(x1)--(x12);

\node [draw, shape=circle] (x21) at (3,-1.5) {};
\node [draw, shape=circle] (x22) at (5,-1.5) {};
\node [draw, shape=circle] (x23) at (4,-3) {};

\draw (x21)--(x2)--(x22)--(x23);
\draw[line width=1.8pt] (x21)--(x23);

\node [draw, shape=circle] (x31) at (7,-1.5) {};
\node [draw, shape=circle] (x32) at (9,-1.5) {};
\node [draw, shape=circle] (x33) at (8,-3) {};

\draw (x31)--(x33)--(x32);
\draw[line width=1.8pt] (x31)--(x3)--(x32);

\node [draw, shape=circle] (x41) at (11,-1.5) {};
\node [draw, shape=circle] (x42) at (13,-1.5) {};
\node [draw, shape=circle] (x43) at (12,-3) {};

\draw (x41)--(x4)--(x42)--(x43);
\draw[line width=1.8pt] (x41)--(x43);

\node [draw, shape=circle] (x51) at (15,-1.5) {};
\node [draw, shape=circle] (x52) at (17,-1.5) {};
\node [draw, shape=circle] (x53) at (16,-3) {};

\draw (x51)--(x53)--(x52);
\draw[line width=1.8pt] (x51)--(x5)--(x52);

\end{tikzpicture}
\caption{{\small The graph $G$. The bold edges form a $\rho_{e}^{o}(H)$-set of cardinality $3r+2$.}}\label{Figure1}
\end{figure}

In view of Observation \ref{obs1}, the inequality stated in Proposition \ref{prop1} cannot be derived from the fact that $G\boxtimes H$ and $G\square H$ are spanning subgraphs of $G\circ H$ for all graphs $G$ and $H$.

\begin{proposition}\label{prop1}
For any graphs $G$ and $H$, $\rho_{e}^{o}(G\circ H)\leq min\{\rho_{e}^{o}(G\boxtimes H),\rho_{e}^{o}(G\square H)\}$. 
\end{proposition}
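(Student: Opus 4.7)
The plan is to prove both inequalities at once by transforming an optimal EOP set of $G\circ H$ into an EOP set in $G\square H$ of at least the same size, and to verify along the way that the new set is simultaneously an EOP set of $G\boxtimes H$. Given a $\eop(G\circ H)$-set $B$, I would decompose $B=B_1\cup B_2$, where $B_1$ consists of the edges with distinct $G$-coordinates and $B_2$ consists of the edges inside a single $H$-fiber, as in the proof of Theorem~\ref{thm:lex}. Edges of $B_2$ are already edges of both $G\square H$ and $G\boxtimes H$, so the only real obstacle is that a generic edge $(g,h)(g',h')\in B_1$ may use $H$-coordinates that are neither equal nor adjacent in $H$, and hence need not be an edge in either product.

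The key is to exploit the structural facts that have just been established inside the proof of Theorem~\ref{thm:lex}: the set ${\rm pr}_G(B_1)$ is an EOP set of $G$ whose components are induced stars $S_1,\ldots,S_k$ with respective centers $s_1,\ldots,s_k$; the projection ${\rm pr}_G(B_2)$ is an independent set of $G$ that is at $G$-distance at least two from $\bigcup_iV(S_i)$; and for every edge $s_ig^*\in E(S_i)$, the set $A_{i,g^*}=\{e\in B_1:{\rm pr}_G(e)=s_ig^*\}$ satisfies $|A_{i,g^*}|\leq\alpha(H)$, because its edges all share a common endvertex in $G\circ H$ and the EOP condition forces the $H$-coordinates of the other endvertices to form an independent set of $H$.

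Fixing an $\alpha(H)$-set $I\subseteq V(H)$, I would then replace each $A_{i,g^*}$ by the ``diagonal'' family $\widetilde{A}_{i,g^*}=\{(s_i,h')(g^*,h'):h'\in I\}\subseteq E(G\square H)$, and define
$$\widetilde{B}=B_2\cup\bigcup_{i=1}^{k}\bigcup_{s_ig^*\in E(S_i)}\widetilde{A}_{i,g^*}.$$
The cardinalities line up: $|\widetilde{B}|=|B_2|+\alpha(H)\cdot|{\rm pr}_G(B_1)|\geq|B_2|+\sum_{i,g^*}|A_{i,g^*}|=|B|$. Moreover, $\widetilde{B}\subseteq E(G\square H)\subseteq E(G\boxtimes H)$, so once $\widetilde{B}$ is verified to be an EOP set in the richer graph $G\boxtimes H$, the EOP property in $G\square H$ is automatic, and both inequalities of the proposition follow.

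The hard part will be the EOP verification in $G\boxtimes H$, which becomes a case analysis over pairs of edges in $\widetilde{B}$: (i) two edges inside the same $\widetilde{A}_{i,g^*}$ cause no triangle or $P_4$ because the independence of $I$ in $H$ kills both the fiber edges and the diagonal edges of $G\boxtimes H$ between their endpoints; (ii) edges from $\widetilde{A}_{i,g^*}$ and $\widetilde{A}_{i,h^*}$ with $g^*\neq h^*$ are safe because the leaves $g^*,h^*$ are non-adjacent in $G$ by the induced-star property and, again, $I$ is independent in $H$; (iii) edges from $\widetilde{A}_{i,g^*}$ and $\widetilde{A}_{j,h^*}$ with $i\neq j$ pose no problem because $S_i$ and $S_j$ sit at $G$-distance at least two; (iv) edges of $\widetilde{A}_{i,g^*}$ and $B_2$ are disconnected in $G\boxtimes H$ because ${\rm pr}_G(B_2)$ is at $G$-distance at least two from $V(S_i)$; and (v) pairs inside $B_2$ inherit their EOP property from $B$ since they lie in pairwise non-interacting $H$-fibers. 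I expect case (ii) to require the most care, since it is precisely there that the independence of $I$ in $H$ and the induced-star structure of ${\rm pr}_G(B_1)$ must cooperate to cancel the extra diagonal edges that $G\boxtimes H$ has but $G\square H$ does not.
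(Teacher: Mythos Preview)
Your proposal is correct and follows essentially the same route as the paper's proof: both decompose a $\rho_e^o(G\circ H)$-set as $B_1\cup B_2$, invoke the star structure of ${\rm pr}_G(B_1)$ and the bound $|A_{i,g^*}|\le\alpha(H)$ established in the proof of Theorem~\ref{thm:lex}, and then replace each $A_{i,g^*}$ by the ``horizontal'' family $\{(s_i,h')(g^*,h'):h'\in I\}$ to obtain an EOP set of $G\square H$ (and of $G\boxtimes H$) of size at least $|B|$. Your shortcut of verifying the EOP property only in $G\boxtimes H$ and then inheriting it in the sparser $G\square H$ is a tidy efficiency the paper does not spell out, but the construction and the underlying case analysis are the same.
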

\begin{proof}
We adopt the notations given in the proof of Theorem \ref{thm:lex}. Consider any edge $e=gg'\in \bigcup_{i=1}^{k}E(S_{i})$. Then, $e=p_{G}\big{(}(g,h)(g',h')\big{)}$ for some $(g,h)(g',h')\in B_{1}$. 

Let $e$ be the projection of two edges $(g,h_{1})(g',h_{1}'),(g,h_{2})(g',h_{2}')\in B_{1}$ which have no common endvertex. Then, by the adjacency rule of lexicographic product, $(g',h_{1}')(g,h_{2})$ is a common edge of $(g,h_{1})(g',h_{1}')$ and $(g,h_{2})(g',h_{2}')$, a contradiction. This shows that every two edges in $B_{1}$ with the projection $e$ to $G$ have a common endvertex. On the other hand, if there exists an edge of the form $(g',h_{1}')(g',h_{2}')\in B$, then either $(g,h)(g',h_{1}')$ or $(g,h)(g',h_{2}')$ is a common edge of $(g,h)(g',h')$ and $(g',h_{1}')(g',h_{2}')$, which is impossible. Moreover, there is no edge of the form $(g,h_{1})(g,h_{2})\in B$ by a similar fashion.

Suppose that $e$ is the projection of $k\geq \alpha(H)+1$ edges $(g,h)(g',h_{1}'),\ldots,(g,h)(g',h_{k}')\in B_{1}$ to $G$. Hence, $|\{h_{1}',\ldots,h_{k}'\}|=k\geq \alpha(H)+1$. Therefore, $h_{i}'h_{j}'\in E(H)$ for some $1\leq i\neq j\leq k$. This shows that $(g',h_{i}')(g',h_{j}')$ is the common edge of $(g,h)(g',h_{i}')$ and $(g,h)(g',h_{j}')$, contradicting the fact that $B$ is an EOP set of $G\circ H$. In fact, we have proved that every edge of $\bigcup_{i=1}^{k}E(S_{i})$ is the projection of at most $\alpha(H)$ edges of $B_{1}$. Consequently, $|B_{1}|\leq \sum_{i=1}^{k}|E(S_{i})|\alpha(H)$. 

On the other hand, let $I$ be an $\alpha(H)$-set and fix a vertex $h^{*}\in V(H)$. We observe that
\begin{center} 
$B'=\bigcup_{i=1}^{k}\bigcup_{g\in L(S_{i})}\bigcup_{h\in I}\{(s_{i},h^{*})(g,h)\}$
\end{center}
is an EOP set of $G\circ H$ of cardinality $\sum_{i=1}^{k}|E(S_{i})|\alpha(H)$, in which $L(S_{i})$ and $s_i$ are the set of leaves of $S_{i}$ and the center of $S_i$, respectively, and every edge is of the form $(g_1,h_1)(g_2,h_2)$ with $g_{1}\neq g_{2}$. In view of this and since $B$ is an EOP set of $G\circ H$ of maximum size, we may assume that $B_{1}=B'$.

We now construct a desired EOP set of both $G\square H$ and $G\boxtimes H$ by using the EOP set $B$ of $G\circ H$. Let $V(S_{i})=\{s_{i},g_{i1},\ldots,g_{im_{i}}\}$ for each $i\in[k]$. By the structure of Cartesian and strong product graphs, we deduce that 
\begin{center}
$(\bigcup_{i=1}^{k}\bigcup_{h\in I}\{(s_{i},h)(g_{i1},h),\ldots,(s_{i},h)(g_{im_{i}},h)\})\bigcup B_{2}$ 
\end{center}
is an EOP set in both $G\square H$ and $G\boxtimes H$ of cardinality $|B|$. This leads to $\rho_{e}^{o}(G\circ H)=|B|\leq \rho_{e}^{o}(G\boxtimes H),\rho_{e}^{o}(G\square H)$ and completes the proof.  
\end{proof}

Let $I$ and $B$ be an $\alpha(G)$-set and a $\rho_{e}^{o}(H)$-set, respectively. By the adjacency rule of $G\circ H$, we observe that $Q=\bigcup_{g\in I}\{(g,h)(g,h')\mid hh'\in B\}$ is an EOP set of $G\circ H$. Hence, $\rho_{e}^{o}(G\circ H)\geq|Q|=\alpha(G)\rho_{e}^{o}(H)$. With this in mind, we have the following bounds in the cases of Cartesian and strong product graphs due to Proposition \ref{prop1} and Theorem \ref{thm:lex}.

\begin{corollary}\label{cart-str}
For any graphs $G$ and $H$, \vspace{1mm}\\
$(i)$ $\rho_{e}^{o}(G\square H)\ge \max\{\rho_{e}^{o}(G)\alpha(H),\alpha(G)\rho_{e}^{o}(H)\}$, and\vspace{1mm}\\
$(ii)$ $\rho_{e}^{o}(G\boxtimes H)\ge \max\{\rho_{e}^{o}(G)\alpha(H),\roeo(H)\alpha(G)\}$.\vspace{1mm}\\
These bounds are sharp.
\end{corollary}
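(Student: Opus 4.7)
The plan is to combine Proposition~\ref{prop1} with the two already-established lower bounds on $\rho_e^o(G\circ H)$. First, recall that Proposition~\ref{prop1} gives
\[
\rho_e^o(G\circ H)\leq \min\{\rho_e^o(G\boxtimes H),\rho_e^o(G\square H)\},
\]
so any lower bound on $\rho_e^o(G\circ H)$ automatically transfers to both $\rho_e^o(G\boxtimes H)$ and $\rho_e^o(G\square H)$. The lower bound in Theorem~\ref{thm:lex} provides $\rho_e^o(G\circ H)\geq \rho_e^o(G)\alpha(H)$, and the paragraph immediately preceding the corollary exhibits the EOP set $Q=\bigcup_{g\in I}\{(g,h)(g,h')\mid hh'\in B\}$ in $G\circ H$ (where $I$ is an $\alpha(G)$-set and $B$ is a $\rho_e^o(H)$-set), yielding $\rho_e^o(G\circ H)\geq \alpha(G)\rho_e^o(H)$. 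Taking the maximum of these two lower bounds and chaining them through Proposition~\ref{prop1} immediately gives both statements $(i)$ and $(ii)$.

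For sharpness, I would argue with the trivial case $H\cong K_1$. Then $G\square H\cong G\cong G\boxtimes H$, while $\alpha(K_1)=1$ and $\rho_e^o(K_1)=0$, so
\[
\max\{\rho_e^o(G)\alpha(H),\alpha(G)\rho_e^o(H)\}=\rho_e^o(G)=\rho_e^o(G\square H)=\rho_e^o(G\boxtimes H),
\]
so equality holds for every graph $G$. This already verifies sharpness for an infinite family of instances of both bounds.

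There is essentially no obstacle here beyond bookkeeping: Proposition~\ref{prop1} carries the entire argument once we know the two lower bounds on $\rho_e^o(G\circ H)$, both of which have been recorded above the corollary. The only small point to be careful about is that the two lower bounds on $\rho_e^o(G\circ H)$ come from different constructions (the one in Theorem~\ref{thm:lex} uses the star structure of a $\rho_e^o(G)$-set together with an $\alpha(H)$-set, while the one just above the corollary uses an $\alpha(G)$-set together with a $\rho_e^o(H)$-set), but both are already in the paper and can be invoked directly.
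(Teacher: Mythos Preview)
Your derivation of the bounds is exactly the paper's approach: the paper's proof begins with ``We only need to prove the sharpness,'' taking for granted precisely the chain through Proposition~\ref{prop1}, Theorem~\ref{thm:lex}, and the preceding paragraph that you spell out.

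The difference is in the sharpness examples. Your choice $H\cong K_1$ is technically correct---it does give equality for every $G$---but it is degenerate, since $G\square K_1\cong G\boxtimes K_1\cong G$ and the ``product'' has collapsed. The paper instead gives nontrivial sharpness examples: for $(i)$ it takes $G\cong K_{1,r}$ and $H\cong K_{1,s}$ with $s\ge r\ge 2$ and proves $\rho_e^o(K_{1,r}\square K_{1,s})=rs$ via a case analysis on the structure of a maximum EOP set; for $(ii)$ it takes $H\cong K_n$ with $n\ge 3$ and uses the isomorphism $G\boxtimes K_n\cong G\circ K_n$ together with the upper bound in Theorem~\ref{thm:lex} to pin down $\rho_e^o(G\boxtimes K_n)=\alpha(G)$. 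Your argument is shorter and perfectly valid if the goal is merely to certify that equality can occur, but the paper's examples carry more information: they show the bounds remain tight for genuine products where both factors contribute edges.
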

\begin{proof}
We only need to prove the sharpness. Consider the Cartesian product of two stars, notably $G\cong K_{1,r}$ and $H\cong K_{1,s}$, where $s\ge r\ge 2$. Let $V(G)=\{x,x_1,\ldots,x_r\}$ and $V(H)=\{y,y_1,\ldots,y_s\}$, where $x$ and $y$ are the centers of the stars. As the case when $r=s=2$ is clear, we may assume $s\ge3$. Next, it is easy to see that $P=\{(x_i,y)(x_i,y_j)\mid 1\leq i\leq r\ \mbox{and}\ 1\leq j\leq s\}$ is an EOP set in $G\square H$ of cardinality $rs$. Note that all vertices of the form $(x_i,y_j)$ are leaves in $(G\square H)[P]$.

Let $Q$ be a $\rho_{e}^{o}(G\square H)$-set. Suppose first that $(x,y)$ is a vertex of $(G\square H)[Q]$. We may assume, without loss of generality, that $(x,y)(x,y_1)$ is an edge in $Q$. By the structure and since $Q$ is an EOP set of $G\square H$, neither the edges of fibers $^{x_{i}}\!H$, for $i\in[r]$, nor the edges of fibers $G^{y_{j}}$, for $j\in[s]\setminus\{1\}$, exist in $Q$. Moreover, if an edge of $G^{y}$ (resp.\ $G^{y_{1}}$) is in $Q$, then no edge of $G^{y_{1}}$ (resp.\ $G^{y}$) is in $Q$. Therefore, $Q\subseteq E(^{x}\!H)\cup E(G^{y})$ or $Q\subseteq E(^{x}\!H)\cup E(G^{y_{1}})$. So, $\rho_{e}^{o}(G\square H)\leq r+s<rs$, a contradiction.

Since $(x,y)\notin V\big{(}(G\square H)[Q]\big{)}$, all edges in $Q$ must have an endvertex of the form $(x_i,y_j)$. Suppose that there exists a vertex of the form $(x_i,y_j)$, which is incident with two edges from $Q$ (note that $\deg_{G\cp H}\big{(}(x_i,y_j)\big{)}=2$, hence it can be adjacent with at most two edges from $Q$), and let $S$ be the set of all such vertices. Without loss of generality, assume that $(x_1,y_1)\in S$, and so it is the center of a star in $(G\square H)[Q]$ with two leaves \big{(}namely, $(x_1,y)$ and $(x,y_1)$\big{)}. Note that none of the vertices in $V(^{x_1}\!H)\setminus\{(x_1,y),(x_1,y_1)\}$ can belong to $V\big{(}(G\square H)[Q]\big{)}$, and also none of the vertices in $V(G^{y_1})\setminus \{(x,y_1),(x_1,y_1)\}$ can be in $V\big{(}(G\square H)[Q]\big{)}$, because $Q$ is an EOP set. Therefore, we may assume without loss of generality that $S=\{(x_1,y_1),\ldots,(x_k,y_k)\}$, where $1\le k\le r$. By the previous observation, we infer that
\begin{center}
$|Q|=2k+(r-k)(s-k)=rs-k(s+r-k-2)<rs$.
\end{center}
This is a contradiction with $Q$ being a $\roeo(G\cp H)$-set. Therefore, every vertex of the form $(x_i,y_j)$ is incident with at most one edge from $Q$. With this in mind and since each edge in $Q$ has an endvertex of the form $(x_i,y_j)$, we derive that $|Q|\leq|\{(x_i,y_j)\mid i\in[r]\ \mbox{and}\ j\in[s]\}|=rs$. On the other hand, by the construction of the EOP set $P$ presented earlier, we infer $|Q|=rs$, which shows that the bound $(i)$ is sharp.

For any graph $G$ and integer $n\geq3$, $\rho_{e}^{o}(G\boxtimes K_n)\ge \max\{\rho_{e}^{o}(G)\cdot 1,1\cdot\alpha(G)\}=\alpha(G)$ by $(ii)$. Moreover, $G\boxtimes K_n\cong G\circ K_n$. This, in view of Theorem \ref{thm:lex} leads to $\rho_{e}^{o}(G\boxtimes K_n)\leq \alpha(G)$. Therefore, $(ii)$ is sharp regardless of our choice of $G$.
\end{proof}

By the sharpness part of the proof of Corollary\ref{cart-str} $(ii)$ and the fact that $G\circ K_n\cong G\boxtimes K_n$ we infer that $\rho_{e}^{o}(G\circ K_n)=\alpha(G)$, where $G$ is an arbitrary graph and $n\ge 3$. This gives yet another large family of pairs of graphs for which the upper bound in Theorem \ref{thm:lex} is attained.

Note that the results in Proposition \ref{prop1} and Corollary \ref{cart-str} have natural induced matching counterparts as follows.

\begin{proposition}\label{prop2}
The following statements hold for any graphs $G$ and $H$:\vspace{1mm}\\
$(i)$ $\nu_{I}(G\circ H)\leq min\{\nu_{I}(G\boxtimes H),\nu_{I}(G\square H)\}$,\vspace{1mm}\\
$(ii)$ $\nu_{I}(G\square H)\ge \max\{\nu_{I}(G)\alpha(H),\alpha(G)\nu_{I}(H)\}$, and\vspace{1mm}\\
$(iii)$ $\nu_{I}(G\boxtimes H)\ge \max\{\nu_{I}(G)\alpha(H),\nu_{I}(H)\alpha(G)\}$.
\end{proposition}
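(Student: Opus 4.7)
My plan is to prove each of the three parts by explicit construction: parts $(ii)$ and $(iii)$ admit a single common construction, while part $(i)$ requires an edge-transfer argument patterned on the proof of Proposition~\ref{prop1}.

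For $(ii)$ and $(iii)$, take an $\alpha(G)$-set $I$ and a $\nu_I(H)$-set $N$, and form
\[
Q=\{(g,h_1)(g,h_2)\mid g\in I,\ h_1h_2\in N\}.
\]
Two edges of $Q$ lying in distinct fibers $^{g_1}\! H$ and $^{g_2}\! H$ cannot be adjacent or joined by an edge of $G\square H$ or $G\boxtimes H$: such a connection would force $g_1g_2\in E(G)$, contradicting $g_1,g_2\in I$. Within a single fiber, both $G\square H$ and $G\boxtimes H$ induce a copy of $H$, in which $N$ is already induced, so $Q$ is an induced matching of size $\alpha(G)\nu_I(H)$ in each of the two products. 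A symmetric construction using a $\nu_I(G)$-set and an $\alpha(H)$-set delivers the other halves of the bounds in $(ii)$ and $(iii)$.

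For $(i)$, given a $\nu_I(G\circ H)$-set $M$, I split $M=M_1\cup M_2$ with $M_1$ the edges having distinct first coordinates. As in the proof of Theorem~\ref{lex}, the edge projection ${\rm pr}_G$ is injective on $M_1$, and ${\rm pr}_G(M_1)$ is an induced matching in $G$. Define $\phi:M_1\to E(G\square H)$ by
\[
\phi\bigl((g,h)(g',h')\bigr)=(g,h)(g',h),
\]
which is a valid edge of both $G\square H$ and $G\boxtimes H$, and set $M^*=\phi(M_1)\cup M_2\subseteq E(G\square H)\subseteq E(G\boxtimes H)$. Injectivity of $\phi$, together with disjointness of $\phi(M_1)$ from $M_2$, yields $|M^*|=|M|$. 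Since $G\square H$ is a spanning subgraph of $G\boxtimes H$, it then suffices to prove that $M^*$ is an induced matching in $G\boxtimes H$; the conclusion for $G\square H$ will follow automatically, because induced matchings contained in a graph remain induced matchings in each of its spanning subgraphs.

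The crux of this final verification is the inclusion $E(G\boxtimes H)\subseteq E(G\circ H)$: any adjacency in $G\boxtimes H$ is already an adjacency in $G\circ H$. Consequently, any shared endpoint between two edges of $M^*$, or any $G\boxtimes H$-edge joining endpoints of two distinct edges of $M^*$, can be lifted—via the adjacency rules of $G\circ H$ between endpoints of $\phi(e)$ and the corresponding endpoints of $e\in M_1$—to a forbidden configuration among the endpoints of $M$. The main obstacle is the resulting case analysis on pairs of edges of $M^*$, split into the $\phi(M_1)$--$\phi(M_1)$, $\phi(M_1)$--$M_2$, and $M_2$--$M_2$ combinations; in each case one derives a contradiction with one of three established facts: $M$ is a matching, ${\rm pr}_G(M_1)$ is a matching, or $M$ is an induced matching in $G\circ H$.
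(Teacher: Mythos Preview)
Your argument is correct and follows essentially the route the paper implies by calling Proposition~\ref{prop2} the ``natural induced matching counterpart'' of Proposition~\ref{prop1} and Corollary~\ref{cart-str}. Your direct constructions for $(ii)$ and $(iii)$ are in fact slightly more self-contained than what the paper suggests (which would deduce them from $(i)$ together with Theorem~\ref{lex} and commutativity of $\square$ and $\boxtimes$), and your proof of $(i)$ is the appropriate simplification of the Proposition~\ref{prop1} argument: because ${\rm pr}_G$ is already injective on $M_1$ for an induced matching, you avoid the intermediate replacement of $B_1$ by a canonical $B'$ that the EOP case required. One small technical point: as written, $\phi\bigl((g,h)(g',h')\bigr)=(g,h)(g',h)$ depends on which endpoint you label $(g,h)$, so you should say explicitly that you fix an orientation of each edge of $M_1$ (this is harmless since ${\rm pr}_G(M_1)$ is a matching).
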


\subsection{Hypercubes}

The {\em $n$-dimensional hypercube}, or {\em $n$-cube}, $Q_n$ can be recursively defined as $Q_1=K_2$ and $Q_n=Q_{n-1}\cp K_2$ when $n\ge 2$. Clearly, $n$-cubes are $n$-regular bipartite graphs.

The exact value of induced matching number in hypercubes is known. In fact, $\nu_I(Q_n)=2^{n-2}$ for all $n\ge1$ (see \cite{gh-2005}). In what follows, we discuss the EOP number of hypercubes.

One of the well-known open problems in hypercubes is concerned with their domination number. Given a graph $G$, a set $D$ is a {\em dominating set} in $G$ if $\bigcup_{v\in D}{N[v]}=V(G)$. The minimum cardinality of a dominating set in $G$ is its {\em domination number}, denoted $\gamma(G)$. The exact value of domination number for $n$-cubes have only been determined when $n\le 9$ (see \cite{ostergard-2001}), and for some special families of these graphs, which we present after some further definitions. 

If $X\subseteq V(G)$, then $X$ is a {\em $2$-packing} in $G$ if $N[x]\cap N[y]=\emptyset$ for every pair of distinct vertices $x,y\in X$. The cardinality of a largest $2$-packing in $G$ is the {\em $2$-packing number} $\rho_2(G)$ of $G$. If $P\subseteq V(G)$ is both a $2$-packing and a dominating set of $G$, then $P$ is a {\em $1$-perfect code} or an {\em efficient dominating set} in $G$. Note that a 1-perfect code in $G$ is precisely a $2$-packing such that the closed neighborhoods of its vertices form a partition of the vertex set of $G$. If $G$ is, in addition, $r$-regular, then we can determine the $2$-packing number and the domination number very easily.

\begin{observation}
\label{obs:code_regular}
If $G$ is an $r$-regular graph admitting a $1$-perfect code, then $\gamma(G)=\rho_2(G)=\frac{|V(G)|}{r+1}$.
\end{observation}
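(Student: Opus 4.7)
The plan is to exploit the defining property of a $1$-perfect code: the closed neighborhoods $\{N[v]\mid v\in P\}$ of its vertices form a partition of $V(G)$. Combined with $r$-regularity, this immediately fixes $|P|$, and the two easy counting bounds $\gamma(G)\ge |V(G)|/(r+1)$ and $\rho_2(G)\le |V(G)|/(r+1)$ are both saturated by $P$.

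More precisely, I would start from a $1$-perfect code $P$ of $G$. Being a $2$-packing, the sets $N[v]$, for $v\in P$, are pairwise disjoint; being a dominating set, their union is all of $V(G)$. Since $G$ is $r$-regular, $|N[v]|=r+1$ for every $v$, so the partition identity gives
\[
|V(G)|=\sum_{v\in P}|N[v]|=|P|(r+1),
\]
i.e.\ $|P|=|V(G)|/(r+1)$.

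Next, for any dominating set $D$ of $G$, the union bound yields $|V(G)|=|\bigcup_{v\in D}N[v]|\le \sum_{v\in D}|N[v]|=|D|(r+1)$, hence $\gamma(G)\ge |V(G)|/(r+1)$; as $P$ itself is a dominating set of that size, equality holds and $\gamma(G)=|V(G)|/(r+1)$. Symmetrically, for any $2$-packing $X$ of $G$ the sets $N[x]$ with $x\in X$ are pairwise disjoint subsets of $V(G)$, so $|X|(r+1)=\sum_{x\in X}|N[x]|\le |V(G)|$, giving $\rho_2(G)\le |V(G)|/(r+1)$; since $P$ is a $2$-packing realizing this bound, $\rho_2(G)=|V(G)|/(r+1)$.

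There is no real obstacle here; the whole argument is a one-line double counting once one recalls that the closed neighborhoods of the code vertices partition $V(G)$. The only thing worth being careful about is not to conflate the two inequalities (they go in opposite directions — $\gamma$ is bounded from below and $\rho_2$ from above by the same quantity), and to invoke regularity uniformly so that $|N[v]|=r+1$ for every code vertex.
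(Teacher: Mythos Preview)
Your argument is correct and is precisely the standard double-counting justification for this fact. The paper does not actually supply a proof: it states the result as an Observation, relying on the sentence immediately preceding it (that a $1$-perfect code is a $2$-packing whose closed neighborhoods partition $V(G)$) as sufficient justification, so your write-up simply makes explicit what the paper leaves to the reader.
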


The following result concerning domination number and $2$-packing number of the hypercubes $Q_{2^k-1}$, where $k\in \mathbb{N}$, follows from the fact that they admit a $1$-perfect code.  

\begin{theorem}{\rm(\cite{harary-1993})}\label{thm:infinite-families}
If $n=2^k-1$, where $k\ge 1$, then $\gamma(Q_{n})=\rho_2(Q_n)=2^{n-k}$.
\end{theorem}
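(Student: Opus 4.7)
The plan is to exhibit a $1$-perfect code in $Q_n$ for $n=2^k-1$ by means of the classical binary Hamming code, and then invoke Observation \ref{obs:code_regular} to obtain the claimed value of $\gamma(Q_n)=\rho_2(Q_n)$. Since $Q_n$ is $n$-regular of order $2^n$, once we have a $1$-perfect code the observation gives $\gamma(Q_n)=\rho_2(Q_n)=2^n/(n+1)=2^n/2^k=2^{n-k}$, which is exactly the claim.

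First, I would identify $V(Q_n)$ with $\mathbb{F}_2^n$ in the usual way, so that two vertices are adjacent if and only if their Hamming distance equals $1$. Let $H$ be the $k\times n$ matrix over $\mathbb{F}_2$ whose $n=2^k-1$ columns are precisely the nonzero vectors of $\mathbb{F}_2^k$, listed in any order, and set
\[
C=\{x\in\mathbb{F}_2^n : Hx^{\!\top}=0\}.
\]
Then $C$ is a linear subspace of dimension at least $n-k$; since the rows of $H$ are easily seen to be linearly independent (any $k$ columns forming a basis of $\mathbb{F}_2^k$ can be found among its columns), the rank of $H$ is $k$, and hence $|C|=2^{n-k}$.

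Next, I would verify that $C$ is a $1$-perfect code in $Q_n$, that is, a $2$-packing that is also a dominating set. For the $2$-packing property, suppose $x,y\in C$ are distinct; then $z:=x-y$ is a nonzero element of $C$, so $Hz^{\!\top}=0$. Since the columns of $H$ are pairwise distinct and nonzero, no nontrivial $\mathbb{F}_2$-linear combination of at most two columns equals zero; thus $z$ has Hamming weight at least $3$, which means $x$ and $y$ differ in at least three coordinates, and so $N[x]\cap N[y]=\emptyset$ in $Q_n$. For the dominating property, observe that the closed balls $N[x]$, $x\in C$, are pairwise disjoint by the previous step, each has size $n+1=2^k$, and $|C|\cdot 2^k=2^{n-k}\cdot 2^k=2^n=|V(Q_n)|$; hence these closed neighborhoods partition $V(Q_n)$, which in particular shows that $C$ dominates $Q_n$.

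With $C$ established as a $1$-perfect code in the $n$-regular graph $Q_n$, Observation \ref{obs:code_regular} applies directly and yields $\gamma(Q_n)=\rho_2(Q_n)=2^{n-k}$, completing the proof. The only genuinely nontrivial step is the construction of the Hamming code and the verification of its minimum-distance property, but this is standard and follows at once from the fact that the columns of $H$ exhaust all nonzero vectors of $\mathbb{F}_2^k$; everything else is a short counting argument.
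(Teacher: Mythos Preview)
Your proof is correct and follows exactly the approach the paper indicates: the paper does not give its own proof of this cited result but remarks that it ``follows from the fact that they admit a $1$-perfect code,'' after which Observation~\ref{obs:code_regular} applies. You have simply made this explicit by exhibiting the binary Hamming code and verifying its $1$-perfect property, which is the standard argument the paper leaves to the reference.
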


Let $G$ be a graph. Note that every 2-packing is a set of vertices in $G$ that are pairwise at distance greater than $2$. We also need a similar concept, where distances between pairs of vertices in a set $P$ have to be greater than $3$ and such a set $P$ is a {\em $3$-packing} in $G$. The largest cardinality of a $3$-packing in $G$ is the {\em $3$-packing number} of $G$ and is denoted by $\rho_3(G)$. If $G$ is bipartite, then this invariant can be useful for bounding the EOP number of $G$.

\begin{lemma}\label{lem:3packingbipartite}
If $G$ is a bipartite graph, then $\roeo(G)\ge \delta(G)\rho_3(G)$.
\end{lemma}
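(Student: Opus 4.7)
My plan is to construct an explicit EOP set from a maximum $3$-packing. Let $P$ be a $\rho_3(G)$-set and define
$$B = \bigcup_{v\in P}\{vv' \mid v'\in N_G(v)\}.$$
It then suffices to show that $B$ is an EOP set of $G$ and that $|B|\ge \delta(G)\rho_3(G)$, from which the lower bound on $\rho_e^o(G)$ is immediate.

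For the cardinality, the first step is to check that the ``stars'' around distinct $v,w\in P$ contribute disjoint edges and that no two centers $v,w\in P$ share a neighbor. Both claims follow at once from $d_G(v,w)>3$ for all distinct $v,w\in P$ (a common neighbor would yield $d_G(v,w)\le 2$, and a shared edge would yield $d_G(v,w)\le 1$). Hence $|B|=\sum_{v\in P}\deg_G(v)\ge |P|\,\delta(G)=\rho_3(G)\,\delta(G)$.

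The main step is to verify that $B$ is an EOP set. I will take two edges $e_1=vv'$, $e_2=ww'$ from $B$ with $v,w\in P$ and split into two cases. If $v=w$, then $e_1,e_2$ share the vertex $v$, and the only possible common edge would be $v'w'$, which would close a triangle $vv'w'$. Here the bipartite hypothesis is essential: it rules out all triangles, so no such common edge exists. If instead $v\ne w$, then $d_G(v,w)>3$ forces every pair of vertices with one endpoint in $\{v,v'\}$ and the other in $\{w,w'\}$ to be non-adjacent; for example, $d_G(v',w')\ge d_G(v,w)-2>1$ by the triangle inequality, and the other three pairs are handled analogously. Consequently no edge of $G$ joins an endpoint of $e_1$ to an endpoint of $e_2$, which is exactly the EOP condition.

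I do not anticipate any serious obstacle here; the proof is essentially a distance calculation together with the explicit construction. The only subtle point worth emphasizing is the role of bipartiteness: it is precisely what guarantees that the full star at each $v\in P$ is already an EOP set on its own, and without it the conclusion would generally fail (triangles through the centers would have to be excluded by some additional hypothesis).
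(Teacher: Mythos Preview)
Your proof is correct and follows essentially the same approach as the paper: take the union of the edge-stars centered at a maximum $3$-packing, use bipartiteness to rule out triangles within a single star, and use the distance condition to rule out common edges between different stars. The paper's argument is more terse (it simply notes that $G[N[u]]$ is a star and that edges from different centers have no common edge), while you spell out the two cases and the triangle-inequality estimates explicitly, but the underlying idea is identical.
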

\begin{proof}
Let $G$ be a bipartite graph and let $P$ be a $\rho_3(G)$-set. Since $G$ is bipartite, every $N[u]$ induces the star $K_{1,\deg(u)}$. Letting $S=\bigcup_{u\in P}{E(G[N[u]])}$, and noting that any two edges $e$ and $f$, where $e$ is incident with $u\in P$ and $f$ is incident with $v\in P$, do not have a common edge, we derive that $S$ is an EOP set in $G$. Moreover, $\roeo(G)\ge |S|=\sum_{u\in P}{\deg(u)}\ge \rho_3(G)\delta(G)$.
\end{proof}

In bipartite $r$-regular graphs, the above lower bound simplifies to $\roeo(G)\ge r\rho_3(G)$, which yields the following observation for hypercubes. 

\begin{corollary}\label{cor:cubelower}
If $n\ge 1$, then $\roeo(Q_n)\ge n\rho_3(Q_n)$.
\end{corollary}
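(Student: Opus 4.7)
The plan is to observe that Corollary \ref{cor:cubelower} is an immediate specialization of Lemma \ref{lem:3packingbipartite}. Concretely, the hypercube $Q_n$ is a bipartite graph (the bipartition is given by parity of Hamming weight of binary strings of length $n$) and it is $n$-regular, so $\delta(Q_n)=n$. Applying Lemma \ref{lem:3packingbipartite} to $Q_n$ yields $\roeo(Q_n)\ge \delta(Q_n)\rho_3(Q_n)=n\rho_3(Q_n)$, which is exactly the claim.

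Because the corollary is a one-line substitution into an already-proved lemma, there is no genuine obstacle in the proof of the corollary itself; the entire substance sits in the (already established) Lemma \ref{lem:3packingbipartite}, whose construction of an EOP set as $\bigcup_{u\in P}E(G[N[u]])$ for a $\rho_3(G)$-set $P$ uses precisely that two stars $G[N[u]]$ and $G[N[v]]$ with $d(u,v)\ge 4$ cannot share a common edge. Writing out the proof, one simply has to recall $\delta(Q_n)=n$ and invoke the lemma.

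The actual difficulty, which lies beyond this corollary, is to estimate $\rho_3(Q_n)$ accurately enough to extract the target value $\roeo(Q_n)=2^{n-1}$ when $n$ is a power of $2$, mentioned in the abstract and in the discussion preceding Observation \ref{obs:code_regular}. I expect this to be done by exhibiting $3$-packings built from linear codes (e.g.\ Hamming-style $1$-perfect codes or their shortenings, in analogy with Theorem \ref{thm:infinite-families}), and then matching the bound from Corollary \ref{cor:cubelower} with an upper bound on $\roeo(Q_n)$ obtained from a size/degree counting argument on stars in $Q_n$. But for the corollary as stated, the proof is simply the direct application of Lemma \ref{lem:3packingbipartite} to the $n$-regular bipartite graph $Q_n$.
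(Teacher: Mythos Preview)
Your proposal is correct and matches the paper's approach exactly: the paper also derives the corollary as an immediate specialization of Lemma~\ref{lem:3packingbipartite}, noting (just before stating the corollary) that for bipartite $r$-regular graphs the lemma's bound becomes $\roeo(G)\ge r\rho_3(G)$, and then applying this to the $n$-regular bipartite graph $Q_n$. Your additional commentary about where the real work lies (estimating $\rho_3(Q_n)$ via Proposition~\ref{prp:3packprism} and Theorem~\ref{thm:infinite-families}) accurately anticipates the paper's subsequent argument.
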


Having a potentially useful lower bound on the EOP number of hypercubes, we next focus on the $3$-packing number of hypercubes. First, we prove the following auxiliary result on prisms, which may be of independent interest.

\begin{proposition}\label{prp:3packprism}
If $G$ is a graph, then $\rho_3(G\cp K_2)\le \rho_2(G)$. If, in addition, $G$ is bipartite, then $\rho_3(G\cp K_2)=\rho_2(G)$.
\end{proposition}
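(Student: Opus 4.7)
The key tool for both inequalities is the standard distance formula for Cartesian products, namely $d_{G\cp K_2}((g,i),(g',j))=d_G(g,g')+|i-j|$, where $i,j\in\{0,1\}$.

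For the inequality $\rho_3(G\cp K_2)\le \rho_2(G)$, my plan is to take any $\rho_3(G\cp K_2)$-set $P$ and to consider its projection $\pi(P)\subseteq V(G)$ onto the first coordinate. I would first argue that $\pi$ is injective on $P$: if two vertices of $P$ projected to the same $g\in V(G)$, then they would be $(g,0)$ and $(g,1)$, which are at distance $1$ in the prism, contradicting that $P$ is a $3$-packing. Then I would show $\pi(P)$ is a $2$-packing in $G$. Taking distinct $(g,i),(g',j)\in P$ with $g\ne g'$, the $3$-packing condition gives $d_G(g,g')+|i-j|>3$; since $|i-j|\le 1$, this forces $d_G(g,g')\ge 3>2$, as required. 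Hence $|P|=|\pi(P)|\le \rho_2(G)$.

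For the reverse inequality under the assumption that $G$ is bipartite, I would lift a $\rho_2(G)$-set $Q$ to a $3$-packing of the prism of the same cardinality, exploiting the bipartition to choose the correct layer for each vertex. Fix a bipartition $V(G)=A\cup B$ and define
\[
P=\{(u,0)\mid u\in Q\cap A\}\cup\{(u,1)\mid u\in Q\cap B\}.
\]
Clearly $|P|=|Q|=\rho_2(G)$. To verify that $P$ is a $3$-packing, I would consider any two distinct $(u,i),(v,j)\in P$ and split into cases by which parts $u$ and $v$ lie in. If $u,v$ lie in the same part, then $i=j$, and since $u,v\in Q$ are in the same part of a bipartite graph we have $d_G(u,v)$ even, hence $d_G(u,v)\ge 4$, giving distance $\ge 4>3$ in the prism. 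If $u,v$ lie in different parts, then $|i-j|=1$, and $d_G(u,v)$ is odd with $d_G(u,v)\ge 3$ because $Q$ is a $2$-packing, hence total distance $\ge 4>3$. Thus $P$ is a $3$-packing of the desired size, and combined with the upper bound we obtain equality.

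The only mildly non-routine step is the parity argument in the bipartite case, where one must match the parity of the layer shift with the parity of the distance in $G$ to reach distance at least $4$ even when $d_G(u,v)=3$. Everything else is a direct application of the product distance formula.
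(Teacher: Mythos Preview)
Your proof is correct and follows essentially the same approach as the paper: project a $3$-packing of the prism to obtain a $2$-packing of $G$, and in the bipartite case lift a $2$-packing of $G$ by placing each vertex in the layer indexed by its bipartition class. The only cosmetic difference is that you organize the bipartite case by whether $u,v$ lie in the same or different parts (invoking the parity of $d_G(u,v)$), whereas the paper splits according to whether $d_G(u,v)\ge 4$ or $d_G(u,v)=3$; these are equivalent reformulations of the same argument.
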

\begin{proof}
Let $Q$ be a $\rho_3(G\cp K_2)$-set, and set $V(K_2)=\{1,2\}$. Consider the natural projection $p_G:V(G)\times V(K_{2})\rightarrow V(G)$. Let $u,v\in p_G(Q)$ be two distinct vertices. Since $(u,i)$ and $(v,j)$, where $i,j\in \{1,2\}$, are at distance at least $4$ in $G\cp K_2$, we infer that $4\le d_{G\cp K_2}\big((u,i),(v,j)\big)=d_G(u,v)+d_{K_2}(i,j)\le d_G(u,v)+1$. Thus $d_G(u,v)\ge3$, and so $p_G(Q)$ is a $2$-packing in $G$. Therefore, $\rho_2(G)\ge|p_G(Q)|=|Q|=\rho_3(G\cp K_2)$. 

Now let $G$ be a bipartite graph with partite sets $A_1$ and $A_2$. Let $S$ be a $\rho_2(G)$-set. Consider the set of vertices in $G\cp K_2$ defined as
\begin{center} 
$P=\big{\{}(u,1)\mid u\in S\cap A_1\big{\}}\cup \big{\{}(u,2)\mid u\in S\cap A_2\big{\}}$.
\end{center}
Note that $|P|=|S|=\rho_2(G)$ and we claim that $P$ is a $3$-packing in $G\cp K_2$. Let $(u,i)$ and $(v,j)$ be arbitrary distinct vertices in $P$. Since $S$ is a $2$-packing, $d_G(u,v)\ge 3$. If $d_G(u,v)\ge 4$, then $d_{G\cp K_2}\big((u,i),(v,j)\big)\ge d_G(u,v)\ge 4$, as desired. Hence, assume that $d_G(u,v)=3$. Now we infer that $u$ and $v$ belong to different partite sets of $G$, and so $i\ne j$. Thus, $d_{G\cp K_2}\big((u,i),(v,j)\big)=d_G(u,v)+d_{K_2}(i,j)=3+1=4$, and so $P$ is a $3$-packing, implying $\rho_3(G\cp K_2)\ge |P|=|S|=\rho_2(G)$. Combined with the first statement of the proposition, we get $\rho_3(G\cp K_2)=\rho_2(G)$.
\end{proof}

Open packing and $2$-packing in hypercubes were recently studied in \cite{bkr-2024}, and the exact value for the $2$-packing number of $n$-cubes were obtained for $n\le 7$. Combining this with Proposition \ref{prp:3packprism} and Corollary \ref{cor:cubelower}, we obtain the following table with values of the three involved invariants (namely, the 2-packing number, the $3$-packing number and the EOP number) in hypercubes of small dimensions.

\begin{table}[ht!]
\begin{center}
\begin{tabular}{|l|c|c|c|c|c|c|c|c|}
 \hline
  $n$ & 1 & 2 & 3 & 4 & 5& 6& 7& 8\\
 \hline
  \hline
 $\rho_2=$ & 1 & 1 & 2 & 2& 4 & 8 & 16 & 17-30 \\
 \hline
 $\rho_3=$ & 1 & 1 & 1 & 2 & 2 & 4 & 8 & 16  \\
 \hline
 $\roeo\ge$ & 1 & 2 & 3 & 8 & 10 & 24 & 56 & 128 \\
 \hline
\end{tabular}
\end{center}
\caption{{\small The exact values of the $2$-packing number and the $3$-packing number of $Q_n$ for $n\leq7$ and $n\leq8$, respectively, and lower bounds on the edge open packing number of these $n$-cubes.}}\label{tab:hyper}
\end{table} 

The first row in the table is from \cite{bkr-2024}, the second row follows by Proposition \ref{prp:3packprism}. The third row, which provides lower bounds on $\roeo(Q_n)$, follows by Corollary \ref{cor:cubelower}.
 
By using the earlier discussion on $1$-perfect codes in hypercubes, we are able to obtain the exact values of the EOP number for an infinite family of hypercubes.

\begin{theorem}
\label{thm:infinite-families-roeo}
If $n=2^k$, where $k\ge 1$, then $\roeo(Q_{n})=2^{n-1}$.
\end{theorem}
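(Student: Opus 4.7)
The plan is to prove the two matching inequalities $\roeo(Q_n)\ge 2^{n-1}$ and $\roeo(Q_n)\le 2^{n-1}$ separately, and then combine them.

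For the lower bound, I would chain together the results already developed in this subsection. Since $n=2^k$, we have $n-1=2^k-1$, so Theorem~\ref{thm:infinite-families} (applied to $Q_{n-1}$) yields $\rho_2(Q_{n-1})=2^{(n-1)-k}=2^{n-1-k}$. Proposition~\ref{prp:3packprism} then gives $\rho_3(Q_n)=\rho_3(Q_{n-1}\cp K_2)=\rho_2(Q_{n-1})=2^{n-1-k}$, using that $Q_{n-1}$ is bipartite. Finally, Corollary~\ref{cor:cubelower} converts this into $\roeo(Q_n)\ge n\,\rho_3(Q_n)=2^k\cdot 2^{n-1-k}=2^{n-1}$.

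For the matching upper bound, I would use a clean leaf-counting argument that is actually valid in any bipartite graph. Let $S$ be an arbitrary EOP set in $Q_n$. By the structural observation recalled in the introduction, $Q_n[V(S)]$ is a disjoint union of induced stars; I designate one endpoint as the \emph{center} in each component (either endpoint in a $K_{1,1}$-component) and call the remaining endpoints \emph{leaves}. Let $L$ be the set of all leaves. Since each edge of $S$ contributes exactly one leaf, and leaves coming from distinct edges are distinct (because the star components are vertex-disjoint), we have $|L|=|S|$. The crux is then to show that $L$ is an independent set in $Q_n$: two leaves of the same star share their center as a common neighbor, so by bipartiteness of $Q_n$ they lie in the same partite class and are non-adjacent; two leaves of different stars are non-adjacent by the EOP condition, which forbids any $Q_n$-edge between vertices of different star components. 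Consequently $|S|=|L|\le\alpha(Q_n)=2^{n-1}$.

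The only real obstacle is spotting the leaf-independence observation; once it is in hand, the upper bound is immediate and, as a bonus, yields $\roeo(G)\le\alpha(G)$ for every bipartite graph $G$. The nontrivial content of the theorem then lies in the fact that the code-based lower bound meets this universal upper bound precisely when $n$ is a power of $2$, because that is exactly when the Hamming-code chain through Theorem~\ref{thm:infinite-families} is available for $Q_{n-1}$.
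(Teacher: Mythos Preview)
Your proof is correct and follows essentially the same route as the paper. The lower bound argument is identical (chaining Theorem~\ref{thm:infinite-families}, Proposition~\ref{prp:3packprism}, and Corollary~\ref{cor:cubelower}); for the upper bound, the paper simply invokes the inequality $\roeo(G)\le\alpha(G)$, which it has already recorded as holding for \emph{all} graphs $G$, whereas you give a self-contained leaf-counting argument specialized to bipartite graphs. Your bipartiteness detour is in fact unnecessary even within your own argument: since $Q_n[V(S)]$ is a disjoint union of \emph{induced} stars, two leaves of the same star component are automatically non-adjacent in $Q_n$ (otherwise the component would not be an induced star), so the same reasoning gives $\roeo(G)\le\alpha(G)$ for arbitrary $G$, matching the paper's statement.
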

\begin{proof}
By Theorem \ref{thm:infinite-families}, we have  $\rho_2(Q_{2^k-1})=2^{2^k-1-k}$, where $k\ge 1$. Combining this with Proposition \ref{prp:3packprism}, we infer $$\rho_3(Q_{2^k})=\rho_3(Q_{2^k-1}\cp K_2)=\rho_2(Q_{2^k-1})=2^{2^k-1-k}.$$ Now, using Corollary \ref{cor:cubelower}, we get $$\roeo(Q_{2^k})\ge 2^k\rho_3(Q_{2^k})=2^k2^{2^k-1-k}=2^{2^k-1}.$$
Finally, combining this with $\alpha(Q_{2^k})=2^{2^k-1}$, and the general bound $\roeo(G)\le \alpha(G)$ which holds for any graph $G$, we derive the statement of the theorem. 
\end{proof}

It may be interesting to remark that similarly as the EOP number, the open packing number of hypercubes have also been determined for the dimensions $n\le 8$ and the dimensions $n$, which are powers of $2$ (see \cite{bkr-2024}). In particular, $\rho^o(Q_{2^k})=2^{2^k-k}$. Again, when $n=9$, the exact value is not known, we only have the bounds $34\le \rho^o(Q_9)\le 60$. Similarly, using the lower bound for $\rho_2(Q_8)$ from Table \ref{tab:hyper} and the earlier reasoning, we infer $\roeo(Q_9)\ge 9\times17=153$. 


\section{Rooted product}
\label{sec:rooted}

A \emph{rooted graph} is a graph in which one vertex is labeled in a special way to distinguish it from the other vertices. This vertex is called the \emph{root} of the graph. Let $G$ be a graph with vertex set $\{v_1,\ldots,v_n\}$. Let ${\cal H}$ be a sequence of $n$ rooted graphs $H_1,\ldots,H_n$. The \emph{rooted product graph} $G({\cal H})$ is the graph obtained by identifying the root of $H_i$ with $v_{i}$ (see \cite{GM}). We here consider the particular case of rooted product graphs where ${\cal H}$ consists of $n$  isomorphic rooted graphs \cite{Sch}. More formally, assuming that the root of $H$ is $v$, we define the rooted product graph $G\circ_{v}H=(V,E)$, where $V=V(G)\times V(H)$ and
\begin{center}
$E=\bigcup_{i=1}^{n}\Big{\{}(v_i,h)(v_i,h')\mid hh'\in E(H)\Big{\}}\bigcup \Big{\{}(v_i,v)(v_j,v)\mid v_iv_j\in E(G)\Big{\}}.$
\end{center}

Note that the subgraphs induced by $H$-fibers of $G\circ_v H$ are isomorphic to $H$. We next give a closed formula for the induced matching number of rooted product graphs. (Recall that $\beta(G)$ stands for the vertex cover number of $G$.)

\begin{theorem}\label{root}
Let $G$ be any graph of order $n$. If $H$ is any graph with root $v$, then
$$\nu_{I}(G\circ_{v}H)\in \Big{\{}n\nu_{I}(H)-\beta(G),n\nu_{I}(H),n\nu_{I}(H)+\nu_{I}(G)\Big{\}}.$$
\end{theorem}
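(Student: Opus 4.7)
The plan is to analyze $\nu_I(G\circ_v H)$ by decomposing any induced matching $M$ of $G\circ_v H$ into its $G$-part $M_G$ (the edges of $M$ of the form $(v_i,v)(v_j,v)$ with $v_iv_j\in E(G)$) and its fiber parts $M_i = M \cap E(^{v_i}\! H)$, and then splitting into three cases according to how the root $v$ is forced to appear in maximum induced matchings of $H$. Set $p=\nu_I(H)$, $p_0=\nu_I(H-v)$, $p_N=\nu_I\bigl(H[V(H)\setminus N_H[v]]\bigr)$, and $p_1$ the maximum size of an induced matching of $H$ that contains an edge incident to $v$. Clearly $p\ge p_0\ge p_N$ and $p_1\le p_N+1$ (delete the edge at $v$), so $p > p_0$ forces $p=p_1=p_N+1=p_0+1$. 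This yields three mutually exclusive cases: (C1) $p=p_0=p_N$; (C2) $p=p_0>p_N$; (C3) $p>p_0$ (equivalently $p_0=p_N=p-1$). I claim these correspond respectively to $\nu_I(G\circ_v H)$ equal to $np+\nu_I(G)$, $np$, and $np-\beta(G)$.

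For the upper bound, let $B\subseteq V(G)$ be the endpoints of edges of $M_G$ and $A$ the set of $v_i\notin B$ such that some edge of $M_i$ is incident to $(v_i,v)$. Because the only edges of $G\circ_v H$ joining distinct fibers are of the form $(v_i,v)(v_j,v)$, the induced-matching property forces: (a) $|M_i|\le p_N$ for $v_i\in B$ (as $M_i$ must avoid $N_H[v]$); (b) $|M_i|\le p_1$ for $v_i\in A$; (c) $|M_i|\le p_0$ for $v_i\notin A\cup B$; and (d) $G[A\cup V(M_G)]$ consists of exactly the edges of $M_G$. Summing yields
$$|M|\le np_0+|M_G|\bigl(1+2p_N-2p_0\bigr)+|A|(p_1-p_0).$$
From (d), picking one endpoint per edge of $M_G$ and adjoining $A$ produces an independent set in $G$, so $|A|+|M_G|\le\alpha(G)$. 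Substituting the case-specific values of $p_0,p_N,p_1$ then gives the three claimed bounds; for instance, in (C3) both coefficients equal $1$, so $|M|\le n(p-1)+\alpha(G)=np-\beta(G)$, and in (C2) both coefficients are nonpositive, so $|M|\le np_0=np$.

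For the lower bounds I would exhibit explicit constructions. In (C1) take $M_G$ to be a $\nu_I(G)$-set and, in every fiber, a $\nu_I(H)$-set consisting of edges in $H[V(H)\setminus N_H[v]]$; these fiber matchings are disjoint from all roots and their $H$-neighbors, so no conflict with $M_G$ arises. In (C2) take $M_G=\emptyset$ and, in every fiber, a $\nu_I(H)$-set in $E(H-v)$. In (C3) fix a minimum vertex cover $C$ of $G$; on each fiber indexed by $V(G)\setminus C$ use a $\nu_I(H)$-set containing an edge incident to $v$ (which exists because $p_1=p$ here), and on each fiber indexed by $C$ use a maximum induced matching of $H-v$ of size $p-1$. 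Since $V(G)\setminus C$ is independent in $G$, no two root-using fibers conflict, and the total matches $\alpha(G)p+\beta(G)(p-1)=np-\beta(G)$.

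The main obstacle I anticipate is establishing (d) cleanly and deducing $|A|+|M_G|\le\alpha(G)$ from it, since it requires tracking how an edge $(v_i,v)(v_j,v)$ of $G\circ_v H$ simultaneously constrains $M_i$, $M_j$, and any would-be root-hitting choice on a neighboring fiber. Once (d) is in hand, the rest of the upper bound reduces to a routine case split on the signs of $1+2p_N-2p_0$ and $p_1-p_0$, whose values are controlled precisely by the three cases above.
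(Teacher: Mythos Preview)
Your proposal is correct and follows essentially the same approach as the paper. Your three cases (C1)--(C3) coincide exactly with the paper's Case~1, Subcase~2.2, and Subcase~2.1, respectively: the paper's condition ``there exists a $\nu_I(H)$-set $A$ with $N_H(v)\cap V(H[A])=\emptyset$'' is precisely $p_N=p$, while ``$v\in V(H[A])$ for every $\nu_I(H)$-set $A$'' is precisely $p_0<p$. Your key step $|A|+|M_G|\le\alpha(G)$ is the paper's observation that $|B'|\le\alpha(G)$ in Subcase~2.1 (its $B'_1$ is your set of root-hitting fiber edges and $B'_2$ is your $M_G$), and your lower-bound constructions are identical to the paper's. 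The only difference is packaging: you introduce $p_0,p_N,p_1$ up front and derive one master inequality that specializes to all three cases, whereas the paper argues each case separately; this makes your version slightly more transparent but not substantively different.
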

\begin{proof}
For the sake of convenience, we make use of the notations $G_{v}=(G\circ_{v}H)[V(G)\times \{v\}]$ and $H_{i}=(G\circ_{v}H)[\{v_{i}\}\times V(H)]$ for each $i\in[n]$ throughout the proof. Also, given a graph $F$, let $E_{F}(x)$ stand for the set of edges of $F$ incident with $x\in V(F)$. We observe that $G_{v}\cong G$ and $H_{i}\cong H$ for each $i\in[n]$.

We first claim that 
\begin{equation}\label{Equ1}
n\nu_{I}(H)-\beta(G)\leq \nu_{I}(G\circ_{v}H)\leq n\nu_{I}(H)+\nu_{I}(G).
\end{equation}
To prove the claim, let $B$ be a $\nu_{I}(G\circ_{v}H)$-set. Note that $B\cap E(G_{v})$ and $B\cap E(H_{i})$ are induced matchings in $G_{v}$ and $H_{i}$, for each $i\in[n]$, respectively. Therefore,
$$\nu_{I}(G\circ_{v}H)=|B|=|B\cap E(G_{v})|+\sum_{i=1}^{n}|B\cap E(H_{i})|\leq \nu_{I}(G)+n\nu_{I}(H).$$

In order to prove the lower bound in (\ref{Equ1}), let $Q$ be a $\nu_{I}(H)$-set. Clearly, $Q_{i}=\{(v_{i},h)(v_{i},h')\mid hh'\in Q\}$ is a $\nu_{I}(H_{i})$-set for every $i\in[n]$. Since $Q_{i}$ is an induced matching in $H_{i}$, it follows that $|E_{H_{i}}\big{(}(v_{i},v)\big{)}\cap Q_{i}|\leq1$ for all $i\in[n]$. If $I$ is an $\alpha(G)$-set, then 
$$Q'=(\bigcup_{v_{i}\in I}Q_{i})\bigcup\Big{(}\bigcup_{v_{i}\notin I}\big{(}Q_{i}\setminus(E_{H_{i}}\big{(}(v_{i},v)\big{)}\cap Q_{i})\big{)}\Big{)}$$   
is an induced matching in $G\circ_{v}H$. Together this fact and the Gallai theorem (which states that $\alpha(F)+\beta(F)=|V(F)|$ for each graph $F$) imply that 
$$\nu_{I}(G\circ_{v}H)\geq|Q'|\geq \alpha(G)\nu_{I}(H)+\beta(G)(\nu_{I}(H)-1)=n\nu_{I}(H)-\beta(G).$$
In fact, we have proved both inequalities in (\ref{Equ1}). 

We now consider the following cases.\vspace{0.75mm}\\
\textit{Case 1}. There exists a $\nu_{I}(H)$-set $A$ for which $N_{H}(v)\cap V(H[A])=\emptyset$ \big{(}this in particular means that $v\notin V(H[A])$\big{)}. It is then easy to see that 
\begin{center}
$\{(v_{i},h)(v_{i},h')\mid hh'\in A\ \mbox{and}\ i\in[n]\}\cup \{(g,v)(g',v)\mid gg'\in S\}$
\end{center} 
is an induced matching in $G\circ_{v}H$ of cardinality $n\nu_{I}(H)+\nu_{I}(G)$, in which $S$ is a $\nu_{I}(G)$-set. Hence, $\nu_{I}(G\circ_{v}H)\geq n\nu_{I}(H)+\nu_{I}(G)$, leading to equality in the upper bound in (\ref{Equ1}).\vspace{0.75mm}\\
\textit{Case 2}. $N_{H}(v)\cap V(H[A])\neq\emptyset$ for each $\nu_{I}(H)$-set $A$. Let $k=|B\cap E(G_{v})|$ (note that $k$ may be equal to zero). Since $B$ is also an induced matching in $G_{v}$, precisely $2k$ vertices of $G_{v}$ are incident with the edges in $B\cap E(G_{v})$. Let $J$ be the set of all these $2k$ vertices. In view of this and since $N_{H}(v)\cap V(H[A])\neq\emptyset$ for each $\nu_{I}(H)$-set $A$, we deduce that $|B\cap E(H_{i})|\leq \nu_{I}(H)-1$ when $v_{i}\in J$. Therefore, we get
\begin{equation}\label{Roo}
\begin{array}{lcl}
\nu_{I}(G\circ_{v}H)=|B|&=&|B\cap E(G_{v})|+\sum_{i\in J}|B\cap E(H_{i})|+\sum_{i\notin J}|B\cap E(H_{i})|\\
&\leq& k+2k(\nu_{I}(H)-1)+(n-2k)\nu_{I}(H)=n\nu_{I}(H)-k\leq n\nu_{I}(H).
\end{array}
\end{equation}

Next, we need two distinguish two more possibilities depending on the behavior of $\nu_{I}(H)$-sets.\vspace{0.75mm}\\
\textit{Subcase 2.1.} $v\in V(H[A])$ for each $\nu_{I}(H)$-set $A$. Let $B'$ be the subset of those edges from $B$ which are incident with a vertex in $G_{v}$. We set $B'=B'_{1}\cup B'_{2}$, in which every edge of $B'_{2}$ is incident with two vertices of $G_{v}$ and $B'_{1}=B\setminus B'_{2}$. Note that every edge in $B'_{1}$ is in $H_{i}$ for some $i\in[n]$. Because $B$ is also an induced matching in $G_{v}$, it follows that $|B'|\leq \alpha(G)$. Moreover, we infer from the assumption of this subcase that $|B\cap E(H_{i})|\leq \nu_{I}(H)-1$ when $v_{i}$ is incident with an edge in $B'_{2}$. Altogether, we get
\begin{equation*}\label{Roo2}
\begin{array}{lcl}
\nu_{I}(G\circ_{v}H)&=&|B|\leq|B'_{2}|+2|B'_{2}|(\nu_{I}(H)-1)+|B'_{1}|\nu_{I}(H)+(n-2|B'_{2}|-|B'_{1}|)(\nu_{I}(H)-1)\\
&=&n(\nu_{I}(H)-1)+|B'|\leq n(\nu_{I}(H)-1)+\alpha(G)=n\nu_{I}(H)-\beta(G).
\end{array}
\end{equation*}
This leads to $\nu_{I}(G\circ_{v}H)=n\nu_{I}(H)-\beta(G)$ due to the first inequality in (\ref{Equ1}).\vspace{0.75mm}\\
\textit{Subcase 2.2.} $v\notin V(H[A])$ for some $\nu_{I}(H)$-set $A$. In such a situation, $\bigcup_{i=1}^{n}\{(v_{i},h)(v_{i},h')\mid hh'\in A\}$ is an induced matching in $G\circ_{v}H$ of cardinality $n\nu_{I}(H)$. So, $\nu_{I}(G\circ_{v}H)\geq n\nu_{I}(H)$. We now infer $\nu_{I}(G\circ_{v}H)=n\nu_{I}(H)$ from (\ref{Roo}).\vspace{0.75mm}

All in all, we have proved that $\nu_{I}(G\circ_{v}H)$ belongs to $\big{\{}n\nu_{I}(H)-\beta(G),n\nu_{I}(H),n\nu_{I}(H)+\nu_{I}(G)\big{\}}$.   
\end{proof}

In what follows, we show that $\nu_{I}(G\circ_{v}H)$ can assume any of the three values in the statement of Theorem \ref{root}. Let $r \geq 2$ and let $H=S(n_1,\ldots,n_r)$. First let $n_1=2$ and $n_i=1$ for all $i\in \{2,\ldots,r\}$ and let the root of $H$ be the vertex at distance 2 from the center of $H$. It is easy to observe that for every graph $G$ of order $n$, we have $\nu_{I}(G\circ_{v}H)=n+\nu_{I}(G)=n\nu_{I}(H)+\nu_{I}(G)$.

Now let $n_i=1$ for all $i \in [r]$ ($H$ is a star) and let the root $v$ be a leaf of the star. Then, $\nu_{I}(G\circ_{v}H)=n=n\nu_{I}(H)$ for all graphs $G$.

Finally, let $n_1=n_2=2$, $n_i=1$ for all $i \in \{3,\ldots ,r\}$ and let $v$ be a leaf of $H$ that is at distance 2 from the center of $H$. Then, $\nu_{I}(G\circ_{v}H)=n+\alpha(G)=2n-\beta(G)=n\nu_{I}(H)-\beta(G)$ for all graphs $G$.

Let $G$ and $H$ be graphs where $V(G)=\{v_1,\ldots,v_{n}\}$. The \textit{corona product} $G\odot H$ of the graphs $G$ and $H$ is obtained from the disjoint union of $G$ and $n$ disjoint copies of $H$, say $H_1,\ldots,H_{n}$, such that $v_i\in V(G)$ is adjacent to all vertices of $H_i$ for each $i\in[n]$. As a consequence of Theorem \ref{root} and its proof, we obtain the exact formula of the induced matching number of corona product graphs. Recall first that the \textit{join} of graphs $G$ and $H$, written $G\vee H$, is a graph obtained from the disjoint union $G$ and $H$ by adding the edges $\{gh\mid g\in V(G)\ \mbox{and}\ h\in V(H)\}$. 

\begin{corollary}\label{Cor} 
For any graphs $G$ and $H$, 
\begin{equation*}\label{EQ1}
\nu_{I}(G\odot H)=\left \{
\begin{array}{lll}
|V(G)|\nu_{I}(H) & \mbox{if}\ E(H)\neq \emptyset,\vspace{1.5mm}\\
\alpha(G) & \mbox{if}\ E(H)=\emptyset.
\end{array}
\right.
\end{equation*}
\end{corollary}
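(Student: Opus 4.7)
The plan is to realize $G\odot H$ as a rooted product and invoke Theorem~\ref{root}. Setting $H':=K_1\vee H$ and letting $v$ be the vertex of the $K_1$-summand (so $v$ is universal in $H'$), a routine check from the definitions gives $G\circ_{v}H'\cong G\odot H$: in the $i$-th fiber of $G\circ_{v}H'$ the root copy $(v_i,v)$ is joined to every $(v_i,h)$ with $h\in V(H)$ by universality of $v$, while the between-fiber edges on $\{(v_i,v)\mid i\in[n]\}$ recover the copy of $G$, exactly matching the corona construction.

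I would next record two elementary facts about $H'$. Universality of $v$ ensures that no induced matching of $H'$ of size at least two can contain an edge incident with $v$, since any such edge is adjacent to every other edge of $H'$. It follows that $\nu_I(H')=\nu_I(H)$ when $E(H)\ne\emptyset$, while $\nu_I(H')=1$ when $E(H)=\emptyset$ and $V(H)\ne\emptyset$. The whole proof then reduces to determining which of the three values in Theorem~\ref{root} is realized, and this is controlled by how $\nu_I(H')$-sets interact with $v$.

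If $E(H)\ne\emptyset$, any $\nu_I(H)$-set of $H$ is also a $\nu_I(H')$-set, and it avoids $v$, so we land in Subcase~2.2 of the proof of Theorem~\ref{root} and obtain $\nu_I(G\odot H)=|V(G)|\nu_I(H')=|V(G)|\nu_I(H)$. If $E(H)=\emptyset$ (with $V(H)\ne\emptyset$), then $H'$ is a star centered at $v$, every edge of $H'$ passes through $v$, and so every $\nu_I(H')$-set contains $v$; this is Subcase~2.1, yielding $\nu_I(G\odot H)=|V(G)|\cdot 1-\beta(G)=\alpha(G)$ via the Gallai identity $\alpha(G)+\beta(G)=|V(G)|$. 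The only point needing a careful check is excluding Case~1 of Theorem~\ref{root}, but this is immediate: since $N_{H'}(v)=V(H')\setminus\{v\}$ and any nonempty induced matching of $H'$ has an endpoint other than $v$, one always has $N_{H'}(v)\cap V(H'[A])\ne\emptyset$ for every $\nu_I(H')$-set $A$.
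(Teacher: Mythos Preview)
Your proof is correct and follows essentially the same route as the paper: both realize $G\odot H$ as $G\circ_v(K_1\vee H)$ with $v$ the $K_1$-vertex, compute $\nu_I(K_1\vee H)$, and then identify which case of Theorem~\ref{root} applies. Your argument is in fact slightly more explicit than the paper's in justifying why Case~1 is excluded (via the universality of $v$), whereas the paper leaves this implicit.
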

\begin{proof}
We observe that $G\odot H$ is isomorphic to $G\circ_{v}(K_{1}\vee H)$, in which the root $v$ is the unique vertex of $K_{1}$. In view of this and Theorem \ref{root}, we have 
$$\nu_{I}(G\odot H)\in \big{\{}n\nu_{I}(K_{1}\vee H)-\beta(G),n\nu_{I}(K_{1}\vee H),n\nu_{I}(K_{1}\vee H)+\nu_{I}(G)\big{\}}.$$ 

Suppose first that $H$ is not edgeless. It is then a routine matter to see that $\nu_{I}(K_{1}\vee H)=\nu_{I}(H)$. In view of this, there exists a $\nu_{I}(K_{1}\odot H)$-set $A$ containing no edges in $\{vh\mid h\in V(H)\}$. So, $A$ fulfills the statements of Case $2$ and Subcase $2.2$ in the proof of Theorem \ref{root}. Therefore, $\nu_{I}(G\odot H)=|V(G)|\nu_{I}(H)$.

Now let $H$ be edgeless. It is then clear that $\nu_{I}(K_{1}\vee H)=1$. Therefore, $\nu_{I}(G\odot H)\in \big{\{}n-\beta(G),n,n+\nu_{I}(G)\big{\}}$ by Theorem \ref{root}. On the other hand, we observe that any $\nu_{I}(K_{1}\vee H)$-set satisfies the statements of Case $2$ and Subcase $2.1$ in the proof of Theorem \ref{root}. Therefore, $\nu_{I}(G\odot H)=n-\beta(G)=\alpha(G)$.
\end{proof}

An analogue to the inequality (\ref{Equ1}) concerning the EOP number can be given as follows: 
\begin{equation}\label{Equ2}
n\eop(H)-\deg_{H}(v)\beta(G)\leq \eop(G\circ_{v}H)\leq n\eop(H)+\eop(G).
\end{equation}

Note that both the lower and the upper bounds in (\ref{Equ2}) are sharp. For the lower bound, it suffices to consider the graph $C_{4}\circ_{v}K_{1,r}$ for $r\geq2$, in which $v$ is the center of $K_{1,r}$. It is easy to see that for even integer $n\geq4$, $\eop(C_{n}\circ_{v}K_{1,r})=\frac{n}{2}r=n\eop(K_{1,r})-\deg_{K_{1,r}}(v)\beta(C_{n})$ holds. To see the sharpness of the upper bound, let $G$ be any graph and $H=S(3,1,\ldots,1)$. Let $v$ be the vertex in $H$ at distance $3$ from its center. Then, $\eop(G\circ_{v}H)=nr+\eop(G)=n\eop(H)+\eop(G)$.

Several open problems arise from this paper, yet we expose the following one for which we suspect is resolvable. 
\begin{problem}
Can one determine a closed formula for $\eop(G\circ_{v}H)$?
\end{problem}

\section*{Acknowledgements}
The authors were supported in part by the Slovenian Research and Innovation Agency (ARIS) (grants P1-0297, N1-0285, J1-3002, and J1-4008).



\begin{thebibliography}{99}

\bibitem {AS} S. Arumugam and J. Sithara, \textit{Fractional edge domination in graphs}, Appl. Anal. Discrete Math. {\bf 3} (2009), 359--370.
\bibitem{bkr-2024} B. Bre\v{s}ar, S. Klav\v{z}ar and D.F. Rall, \textit{Packings in bipartite prisms and hypercubes}, Discrete Math. {\bf 347} (2024), 113875.
\bibitem {BB} B. Bre\v{s}ar and B. Samadi, {\em Edge open packing: complexity, algorithmic aspects, and bounds}, arXiv:2403.00750v1.
\bibitem{bsy-2023} B. Bre\v{s}ar, B. Samadi and I.G. Yero, {\em Injective coloring of graphs revisited}, Discrete Math. \textbf{346} (2023), Paper No. 113348. 
\bibitem{bq-2018} Y. Bu and C. Qi, {\em Injective edge coloring of sparse graphs}, Discrete Math. Algorithms Appl. {\bf 10} (2018), 1850022.
\bibitem {Cam} K. Cameron, \textit{Induced matchings}, Discrete Appl. Math. {\bf 24} (1989), 97--102.
\bibitem {cccd} D.M. Cardoso, J.O. Cerdeira, J.P. Cruz and C. Dominic, \textit{Injective edge coloring of graphs}, Filomat, {\bf 33} (2019), 6411--6423.
\bibitem {cks} G. Chelladurai, K. Kalimuthu and S. Soundararajan, {\em Edge open packing sets in graphs}, RAIRO-Oper. Res. {\bf 56} (2022), 3765--3776.
\bibitem {ddl} K.K. Dabrowski, M. Demange and V.V. Lozin, {\em New results on maximum induced matchings in bipartite graphs and beyond}, Theoret. Comput. Sci. {\bf 478} (2013), 33--40.
\bibitem{elm} A. El Maftouhi, \textit{The minimum size of a maximal strong matching in a random graph}, Australas. J. Combin. {\bf 12} (1995), 77--80.
\bibitem {egt} P. Erd\H{o}s, T. Gallai and Z. Tuza, {\em Covering and independence in triangle structures}, Discrete Math. {\bf 150} (1996), 89--101.
\bibitem {FL} G. Fricke and R. Laskar, \textit{Strong matching in trees}, Congr. Numer. {\bf 89} (1992), 239--243.
\bibitem {gj} M.R. Garey and D.S. Johnson, Computers and intractability: A guide to the theory of NP-completeness, W.H. Freeman $\&$ Co., New York, USA, 1979.
\bibitem{gh-2005} A. Gy\'{a}rf\'{a}s and A. Hubenko, \textit{Semistrong edge coloring of graphs}, J. Graph Theory, {\bf 49} (2005), 39--47.
\bibitem{GM} C.D. Godsil and B.D. McKay, {\em A new graph product and its spectrum}, Bull. Austral. Math. Soc. {\bf 18} (1978), 21--28.
\bibitem{harary-1993} F. Harary and M. Livingston, \textit{Independent domination in hypercubes}, Appl. Math. Lett. {\bf 6} (1993), 27--28.
\bibitem{ImKl} R. Hammack, W. Imrich and S. Klav\v{z}ar, Handbook of product graphs, Second Edition, CRC Press, Boca Raton, FL, 2011.
\bibitem {HS} M.A. Henning and P.J. Slater, {\em Open packing in graphs}, J. Combin. Math. Combin. Comput. {\bf 28} (1999), 5--18.
\bibitem {JK} P.K. Jha and S. Klav\v{z}ar, {\em Independence in direct-product graphs}, Ars Combin. {\bf 50} (1998), 53--64.
\bibitem{krx-2021} A. Kostochka, A. Raspaud and J. Xu, {\em Injective edge-coloring of graphs with given maximum degree}, European J. Combin. {\bf 96} (2021), Paper No. 103355.
\bibitem{matching} L. Lov\'{a}sz and M.D. Plummer, Matching theory, North-Holland Publishing Co., Amsterdam, 1986. 
\bibitem {Lozin} V.V. Lozin, {\em On maximum induced matchings in bipartite graphs}, Inform. Process. Lett. {\bf 81} (2002), 7--11.
\bibitem {msy} Z. Miao, Y. Song and G. Yu, {\em Note on injective edge-coloring of graphs}, Discrete Appl. Math. {\bf 310} (2022) 65--74.
\bibitem{ostergard-2001} P.R.J. {\"O}sterg{\aa}rd and U. Blass, \textit{On the size of optimal binary codes of length $9$ and covering radius $1$}, IEEE Trans. Inform. Theory, {\bf 47} (2001), 2556--2557.
\bibitem {pr} F. Pfender and G.F. Royle, {\em Quartic graphs with every edge in a triangle}, J. Graph Theory, {\bf 82}, 154--164.
\bibitem {p} G.J. Puleo, {\em Graphs with $\alpha_{1}$ and $\tau_{1}$ both large}, Graphs Combin. {\bf 34} (2018), 639--645.
\bibitem{Sch} A.J. Schwenk, {\em Computing the characteristic polynomial of a graph}, Graphs Combin. (R. Bari and F. Harary, eds.), Springer, Berlin, (1974), 153--172.
\bibitem {sv} L.J. Stockmeyer and V.V. Vazirani, {\em NP-completeness of some generalizations of the maximum matching problem}, Inform. Process. Lett. {\bf 15} (1982), 14--19.
\bibitem {West} D.B. West, Introduction to Graph Theory (Second Edition), Prentice Hall, USA, 2001.
\bibitem{zito} M. Zito, {\em Induced matchings in regular graphs and trees}, Graph-theoretic concepts in computer science, {\bf 1665} (1999), 89--101.

\end{thebibliography}
\end{document}